\newtheorem{art}[subsection]{}
\newtheorem{artsub}[subsubsection]{}
\title{Classically psh and pluriharmonic functions on Berkovich spaces}
\author[W.~Gubler]{Walter Gubler}
\address{W. Gubler, Mathematik, Universit{\"a}t
	Regensburg, 93040 Regensburg, Germany}
\email{walter.gubler@mathematik.uni-regensburg.de}
\author[J.~Rabinoff]{Joseph Rabinoff}
\address{J. Rabinoff, Department of Mathematics, Trinity College of Arts and Sciences, Duke University, Durham, NC 27708, USA}
\email{jdr@math.duke.edu}
\let\mathbb=\mathbf
\def\artref#1{\ref{#1}}
 \thanks{W.~Gubler
	was supported by the collaborative research
	center SFB 1085 \emph{Higher Invariants - Interactions between Arithmetic Geometry and Global Analysis} funded by the Deutsche Forschungsgemeinschaft.}
\begin{document}

\begin{abstract}
  First we extend the theory of subharmonic functions on smooth strictly $k$-analytic curves from Thuillier's thesis to the case of possibly singular analytic curves over a non-archimedean field $k$. Classically psh functions are then defined as in complex geometry by using pullbacks to analytic curves (and requiring compatibility with base change). We give various properties of classically psh functions including a local and a global maximum principle. As a consequence, we show that the space of pluriharmonic functions on a quasi-compact Berkovich space is finite dimensional.  As a technical tool, we use that a connected Berkovich space is connected by analytic curves.
\end{abstract}

\keywords{{Berkovich spaces,  plurisubharmonic and pluriharmonic functions}}
\subjclass{{Primary 14G40; Secondary 31C05, 32P05, 32U05}}

\maketitle

%%%%%%%%%%%%%%%%%%%%%%%%%%%%%%%%%%%%%%%%%%%%%%%%%%%%%%%%%%%%%%%%%%%%%%%%%%%%%%%%

\section{Introduction}

Plurisubharmonic functions are the complex analytic counterparts of subharmonic functions in real analysis. To recall the definition,   we begin with the notion of subharmonicity on the complex line.  Let $\Omega\subset\C$ be an open set and let $u\colon\Omega\to[-\infty,\infty)$ be an upper semicontinuous function.  Then $u$ is \defi{subharmonic} if
for every closed ball $\bar B(a,r)$ with center $a$ and radius $r$ contained in $\Omega$ and every continuous function $h\colon \bar B(a,r) \to \R$ which is harmonic on the open ball $B(a,r)$ such that $u\leq h$ on the boundary $\del\bar B(a,r)$, we have $u\leq h$ on $\bar B(a,r)$.
Note that the notion of (sub)harmonicity is insensitive to biholomorphic maps, hence makes sense on any one-dimensional complex manifold.

Now let $X$ be any complex manifold (without boundary, for simplicity) and let $u\colon X\to[-\infty,\infty)$ be an upper semicontinuous function.
Then $u$ is \defi{plurisubharmonic} or \defi{psh} if, for every holomorphic map $f\colon B(0,1)\to X$ from the open unit ball $B(0,1)\subset\C$, the pullback $u\circ f$ is subharmonic on $B(0,1)$.
We say that $u$ is \defi{pluriharmonic} if $u$ and $-u$ are plurisubharmonic.

Plurisubharmonicity was introduced by Lelong \cite{Lelong42} and Oka \cite{Oka42} independently to study holomorphic convexity. It turned out to be a crucial positivity notion on complex manifolds.
Plurisubharmonic functions satisfy the following properties:
\begin{enumerate}
\item \label{subhar1}
The space of psh functions forms a convex cone: that is, if $u_1,u_2$ are psh and $\lambda_1,\lambda_2\geq 0$, then $\lambda_1u_1 + \lambda_2u_2$ is psh.
\item \label{subhar2}
If $u_1,u_2$ are psh functions then $\max\{u_1,u_2\}$ is psh.
\item \label{subhar3}
If $\{u_k\}_{k\geq1}$ is a decreasing sequence of psh functions, then $u = \lim u_k$ is psh.
\item \label{subhar4}
  A psh function $u$ satisfies the \defi{local maximum principle:} if $u$ attains a local maximum at  $x_0\in X$, then $u$ is constant on the connected component of $x_0$ in $X$.
\end{enumerate}

Now we consider the non-Archimedean situation.  Let $k$ be a field that is complete with respect to a  non-Archimedean
absolute value $|\phantom{a}|$. We refer to Section \ref{section: preliminaries} for the notions used from non-Archimedean geometry. The theory of subharmonic functions on a smooth, strictly analytic $k$-curve $X$ has been completely worked out by Thuillier in his thesis~\cite{thuillier05:thesis}.  Thuillier begins by defining harmonic functions as certain piecewise linear functions satisfying a balancing condition; see~\cite[\S2.3]{thuillier05:thesis} or \artref{sec:thuillier.harmonic}.
He then defines a \defi{subharmonic} function to be an upper semicontinuous function $u\colon X\to\R \cup \{-\infty\}$ such that, for every strictly affinoid domain $U\subset X$ and every harmonic function $h$ on $U$ such that $u\leq h$ on $\del U$, we have $u\leq h$ on $U$.  Thuillier shows that subharmonic functions on smooth non-Archimedean curves satisfy the properties one would expect, in analogy to the complex case.

In higher dimensions, it is not completely clear yet what is the best definition for plurisubharmonic functions. There is the global approach by Zhang~\cite{zhang95}, who defines a semipositive metric on a line bundle $L$ over a projective $k$-variety $X$ to be the uniform limit of metrics induced by nef models. This approach is used in Arakelov geometry for various arithmetic applications. For an ample class  $\theta$ on $X$, Boucksom and Jonsson \cite{BJ22globalpluripotentialtheorytrivially} give a far reaching global pluripotential theory on a projective variety over a trivially valued field. Some parts work for any non-Archimedean field, at least in residue characteristic zero. These psh functions might be singular, and the definition uses decreasing limits of semipositive model functions as in Zhang's approach.
Chambert--Loir and Ducros \cite{chambert_ducros25:forms_courants_v2} have posted a second version of their work in which they give a new definition of psh functions which is functorial and is based on a transfinite regularization process.

In all of the above approaches, a maximum principle for plurisubharmonic functions is missing. In this paper, we propose another class of psh functions called \defi{classically plurisubharmonic functions} in close analogy to the complex case. The  definition is local, the functions are upper semicontinuous,   and we will show that properties \eqref{subhar1}--\eqref{subhar4} are satisfied. It will be clear that classically psh functions form a maximal class of reasonable plurisubharmonic functions that is compatible with Thuillier's definition for curves. In particular, the psh functions from the above are classically psh (see~\ref{sec:psh-func-intro} below).

To get regularization by smooth psh functions in the sense of Chambert-Loir and Ducros or to define a Monge--Amp\`ere measure for psh functions, additional hypotheses as in \cite{chambert_ducros25:forms_courants_v2} might be needed, but that is not the subject of this paper.

\subsection{Subharmonic functions on arbitrary $k$-analytic curves} In Section \ref{section:subharmonic.functions}, we define subharmonic functions on any $k$-analytic curve $X$ to be the elements of the following subsheaf of the sheaf of upper semicontinuous functions on $X$ with range $[-\infty,\infty)$.  It is the smallest subsheaf that is functorial with respect to pullback, stable under base change, and agrees with Thuillier's subharmonic functions on smooth $k$-analytic curves. We will show in Proposition \ref{prop:subharmonic.properties} that subharmonic functions satisfy the properties \eqref{subhar1}--\eqref{subhar3}   as in the complex case, and  the local maximum principle on the  interior $\Int(X)=X \setminus \partial X$.

\subsection{Plurisubharmonic functions on $k$-analytic spaces} \label{sec:psh-func-intro}
We define classically plurisubharmonic functions on $k$-analytic spaces as the elements of the smallest subsheaf of the sheaf of upper semicontinuous functions with range $[-\infty,\infty)$ that is functorial with respect to pullback, stable under base change, and agrees with the above defined sheaf of subharmonic functions on $k$-analytic curves. In Section \ref{section:psh functions}, we will show that classically psh functions satisfy properties \eqref{subhar1}--\eqref{subhar3}.

We use the terminology \emph{classically psh} to distinguish from the psh functions introduced by Chambert-Loir and Ducros in \cite{chambert_ducros25:forms_courants_v2}. Note that  psh functions  are classically psh. Indeed,  psh functions are upper semi-continuous and the definition is functorial, so we are reduced to the case of a smooth analytic curve $C$. We have to see that a psh function is subharmonic. By base change, we may assume that $k$ is algebraically closed and non-trivially valued, see Proposition \ref{prop:subharmonic.properties}(\ref{shp.scalars}). Since subharmonic functions are stable under decreasing limits, and the psh functions are defined by a transfinite regularization process from smooth psh functions, it is enough to show that a smooth psh function on $C$ is subharmonic. This follows from a result of Wanner \cite[Theorem 4.7]{wanner19:subharmonicity}.

\subsection{Plurisubharmonic $\R$-PL functions} For an $\R$-piecewise linear function $h\colon X \to \R$ on a good strictly $k$-analytic space over a non-trivially valued field $k$, we have studied the notion of semipositivity in the paper \cite{GR25semipositivePL}. This notion is a local analytic version of the global semipositive metrics introduced by Zhang \cite{zhang95} which are induced by nef models. We show in Theorem \ref{PL-functions and classically psh} that an $\R$-PL function $h$ is semipositive if and only if $h$ is classically psh.

\subsection{Connectivity by curves}
In Theorem \ref{thm:conn.by.curves}, we show that two points $x,y$ of a connected $k$-analytic space $X$ can be connected by analytic curves (assumed to be compact and defined over a non-Archimedean field extension of $k$). If $\partial X=\emptyset$, then we even show that $x,y$ can be connected by overconvergent curves, which means roughly speaking that we can connect them with the interiors of compact curves. We refer to Section \ref{section:connectivity.by.curves} for the precise definitions. The proof of Theorem \ref{thm:conn.by.curves} is inspired by Berkovich's proof \cite[\S 3.2]{berkovic90:analytic_geometry} showing that a connected $k$-analytic space is pathwise connected. However, we have to choose different paths, as the ones in \emph{ibid} are not analytic curves.  This result is related to~\cite[Proposition~6.1.1]{dejong95:formalrigid} and~\cite[Theorem~4.1.1]{berkovic07:integration} (see the discussion after the statement of Theorem~\ref{thm:conn.by.curves} for more details).

\begin{thm}[Local Maximum Principle]\label{thm:maximum.principle}
  Let $X$ be a  $k$-analytic space, and let $u$ be a classically psh function on $X$.  If $u$ has a local maximum on $x\in\Int(X)$, then $u$ is locally constant at~$x$.
\end{thm}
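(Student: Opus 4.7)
Let $M := u(x)$. Since $u$ is upper semicontinuous and attains a local maximum at $x \in \Int(X)$, and $\Int(X)$ is open in $X$ with $k$-analytic spaces being locally connected, I may choose a connected open neighborhood $V$ of $x$ with $V \subseteq \Int(X)$ and $u \leq M$ on $V$. Regarded as a $k$-analytic space in its own right, $V$ satisfies $\Int(V) = V \cap \Int(X) = V$, so $\partial V = \emptyset$. It therefore suffices to show that $u \equiv M$ on $V$, as this will give the desired local constancy at $x$.

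Fix $y \in V$. Applying the overconvergent version of Theorem~\ref{thm:conn.by.curves} to the connected $k$-analytic space $V$, I can connect $x$ to $y$ by a chain of overconvergent analytic curves lying in $V$; an induction on the length of the chain reduces the problem to the case of a single such curve. That is, after passing to some non-Archimedean extension $K/k$, there exist a compact $K$-analytic curve $C$, a morphism $\varphi \colon C \to V_K$, and points $x', y' \in \Int(C)$ lying in the same connected component of $\Int(C)$ with $\varphi(x') = x_K$ and $\varphi(y') = y_K$, where $x_K, y_K \in V_K$ are lifts of $x, y$. By base-change stability, $u_K$ is classically psh on $V_K$, satisfies $u_K \leq M$, and $u_K(x_K) = M$; by pullback functoriality, the function $v := u_K \circ \varphi$ is classically psh on the curve $C$ and hence subharmonic on $C$ in the sense of Section~\ref{section:subharmonic.functions}, with $v \leq M$ and $v(x') = M$.

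Consider the set
\[
S := \{ z \in \Int(C) : v(z) = M \}.
\]
Since $v \leq M$ on $C$, we have $S = \{v \geq M\} \cap \Int(C)$, which is closed in $\Int(C)$ because $v$ is upper semicontinuous. Conversely, at each $z \in S$ the function $v$ attains a maximum at the interior point $z$, so Thuillier's local maximum principle for subharmonic functions on curves (Proposition~\ref{prop:subharmonic.properties}) implies that $v$ is locally constant, equal to $M$, in a neighborhood of $z$; hence $S$ is also open in $\Int(C)$. Thus $S$ is clopen in $\Int(C)$, contains $x'$, and therefore contains the entire connected component of $\Int(C)$ through $x'$, which includes $y'$. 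We conclude $u(y) = v(y') = M$, so $u \equiv M$ on $V$.

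The main obstacle is the appeal to the \emph{overconvergent} connectivity statement in Theorem~\ref{thm:conn.by.curves}: it is essential that both endpoints of each connecting curve lie in $\Int(C)$ (and in a common connected component thereof), so that the subharmonic local maximum principle on curves—which only holds at interior points—can be invoked at each step of the chain. The availability of $\partial V = \emptyset$, which follows from $x \in \Int(X)$, is precisely what allows the overconvergent version to apply. Once that input is in hand, the remainder of the argument is a clean clopen reduction combined with the base-change and pullback functoriality built into the definition of classically psh functions.
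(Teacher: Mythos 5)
There is a genuine gap in the step where you unpack the overconvergent connectivity theorem. You assert that after base change there is a compact curve $C$, a morphism $\varphi\colon C\to V_K$, and preimages $x',y'$ of $x,y$ lying \emph{in the same connected component of $\Int(C)$}. Theorem~\ref{thm:conn.by.curves} does not give you this, and the claim is false in general: the interior of a compact analytic curve is typically highly disconnected (for instance $\Int(\bB^1)$ is the disjoint union of the open residue balls $\bB^1_+(z,1)$), so even when $C$ is connected the two preimages need not lie in a common component of $\Int(C)$ --- indeed they may lie on $\del C$. What Definition~\ref{def:analytic.curve} actually provides is a factorization $C\xrightarrow{\psi}\Int(C')\hookrightarrow C'\xrightarrow{\phi'}X_{k'}$ through the interior of an \emph{auxiliary} curve $C'$, and what is connected is the image $\Gamma=\pi_{k'/k}\circ\phi(C)$ \emph{downstairs in $X$}, not $\Int(C)$ or $\Int(C')$. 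Your clopen argument is therefore run on the wrong space.

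The paper's proof runs the clopen argument on $\Gamma\subset X$ instead: $\Gamma_{u=R}$ is closed in $\Gamma$ by upper semicontinuity, and the real work is in showing it is \emph{open} in $\Gamma$. For $y\in\Gamma_{u=R}$, each point $z$ of the fiber $(\pi_{k'/k}\circ\phi)^{-1}(y)$ maps to $\psi(z)\in\Int(C')$, where Proposition~\ref{prop:subharmonic.properties}(\ref{shp.maximum}) gives local constancy of $u\circ\pi_{k'/k}\circ\phi'$; one then covers the (compact) fiber by such neighborhoods and uses compactness of $C$ to find an open $W\ni y$ in $X$ with $(\pi_{k'/k}\circ\phi)^{-1}(W)$ inside that cover, whence $u\equiv R$ on $W\cap\Gamma$. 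This descent step --- transferring local constancy from the curve upstairs to the image $\Gamma$ downstairs via compactness of $C$ --- is exactly what your argument omits, and it is the reason the compactness hypothesis appears in Definition~\ref{def:analytic.curve}. Your identification of \emph{why} overconvergence is needed (so that the subharmonic maximum principle, valid only at interior points, can be invoked) is correct; the missing piece is that the interior in question is $\Int(C')$, and connectivity must be exploited on $\Gamma$ rather than upstairs.
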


This will be shown in Theorem~\ref{thm: maximum principle for classically psh functions}. For the proof, one may assume that $\partial X= \emptyset$; then we reduce the claim to the local maximum principle for $k$-analytic curves using connectivity by overconvergent curves.

\subsection{Global maximum principle for affinoids}
Berkovich has shown \cite[Proposition 2.4.4]{berkovic90:analytic_geometry} that a $k$-affinoid space $X=\sM(\sA)$ has a Shilov boundary, i.e.~there is a smallest subset $\Gamma$ of $X$ such that
$$\max_{x \in \Gamma} |f(x)| =\max_{x \in X} |f(x)|$$
for all $f \in \sA$. The Shilov boundary is finite and can be described rather explicitly. Ducros has shown that the relative boundary $\partial X$ is parametrized by analytic spaces defined over non-Archimedean extension fields. An inductive procedure leads us  to a stratification of $\partial X$; we will show in Lemma~\ref{stratification of the boundary'} that the minimal strata precisely make up the Shilov boundary of $X$. Using this fact, we deduce rather easily the following global maximum principle from the local maximum principle (Theorem \ref{thm: global affinoid maximum principle}).

\begin{thm}\label{thm:max.on.shilov.bdy}
  A classically psh  function on a  $k$-affinoid space $X$ attains its maximum on the Shilov boundary of $X$.
\end{thm}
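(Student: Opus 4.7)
Since $X$ is compact and $u$ is upper semicontinuous, $u$ attains its maximum, so $c := \sup_X u$ is finite and $M := u^{-1}(c) = \{x\in X: u(x) \geq c\}$ is non-empty and closed in $X$. Write $\Gamma$ for the Shilov boundary of $X$; the goal is to show $M\cap\Gamma\neq\emptyset$.

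The plan is to descend through the stratification of $\partial X$ supplied by Lemma~\ref{stratification of the boundary'}, augmented by $\Int(X)$ as the top open stratum and partially ordered by ``$S\leq T$ iff $S\subseteq\overline{T}$''; by that lemma the minimal strata are exactly the points of $\Gamma$. Suppose for contradiction $M\cap\Gamma=\emptyset$, and choose a stratum $S$ minimal in this partial order among those meeting $M$. Then $S$ is not minimal in the full stratification, so by Ducros's parametrization it is the image $\pi(\Int(Y))$ of a morphism $\pi\colon Y\to X$ with $Y$ a $k'$-affinoid over some non-Archimedean extension $k'/k$, and $\pi(\partial Y)$ lies in strata strictly smaller than $S$. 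Pick $x_0\in S\cap M$ and $y_0\in\Int(Y)$ with $\pi(y_0)=x_0$.

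By functoriality of classically psh functions under pullback and base change, $u':= u\circ\pi$ is classically psh on $Y$. Since $u'\leq c$ everywhere on $Y$ and $u'(y_0)=c$, the point $y_0\in\Int(Y)$ is a local maximum of $u'$, so Theorem~\ref{thm:maximum.principle} yields that $u'$ is locally constant at $y_0$. Applying this at every point of $M':=(u')^{-1}(c)$ that lies in $\Int(Y)$ shows that $M'\cap\Int(Y)$ is open in $Y$, while $M'$ is closed in the compact space $Y$. Let $C$ be the connected component of $y_0$ in $M'\cap\Int(Y)$; by local connectivity $C$ is open in $Y$, and I claim its closure $\overline{C}$ in $Y$ meets $\partial Y$. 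Indeed, if $\overline{C}\subseteq\Int(Y)$, then $\overline{C}\subseteq M'\cap\Int(Y)$ is a connected set containing $C$, so maximality of the connected component forces $\overline{C}=C$; this makes $C$ a connected component of $Y$ lying entirely in $\Int(Y)$, contradicting the fact that the Shilov boundary of each connected component of a non-empty $k'$-affinoid is non-empty and contained in $\partial Y$.

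Picking $y_1\in\overline{C}\cap\partial Y$, we have $y_1\in M'$ (because $\overline{C}\subseteq M'$), hence $\pi(y_1)\in M$. But $\pi(y_1)\in\pi(\partial Y)$ lies in a stratum strictly smaller than $S$, contradicting the minimality of $S$. This contradiction establishes $M\cap\Gamma\neq\emptyset$. The most delicate step is the topological argument that $\overline{C}\cap\partial Y\neq\emptyset$, which rests on the non-emptiness of the Shilov boundary on each connected component of the parametrizing affinoid; everything else is a formal consequence of the local maximum principle (Theorem~\ref{thm:maximum.principle}), functoriality of classically psh functions, and the stratification of $\partial X$.
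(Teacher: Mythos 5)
Your argument is correct and is essentially the paper's own proof: the same descent through the boundary stratification of Lemma~\ref{stratification of the boundary'}, driven by the local maximum principle (Theorem~\ref{thm:maximum.principle}) and the fact from Lemma~\ref{Shilov boundary vs usual boundary} that a positive-dimensional affinoid has non-empty boundary containing its Shilov boundary; the paper merely phrases the descent as an induction on dimension after reducing to $X$ connected, where you instead track the connected component $C$ of $y_0$ in $M'\cap\Int(Y)$. Two small points to tighten: (i) Lemma~\ref{stratification of the boundary'}(4) and the application of Lemma~\ref{Shilov boundary vs usual boundary} to the components of $Y=X_I$ require $X$ to be of \emph{pure} dimension $d$, so you should first decompose $X$ into its irreducible components (the Shilov boundary of $X$ being the union of those of the components), as the paper does; (ii) the sets $X_I$ are nested ($X_J\subset X_I$ for $J\supset I$) rather than disjoint strata, so ``minimal stratum meeting $M$'' is more safely phrased as ``choose $I$ with $M\cap X_I\neq\emptyset$ and $|I|$ maximal'' --- with that reading your argument goes through verbatim.
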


As an application, we will show in Theorem \ref{classically psh and pointwise limits} that a pointwise limit u of a net of classically psh functions is classically psh if $u$ is upper semicontinuous.

\subsection{Pluriharmonic functions}
As in the complex case, a function $u \colon X \to \R$ on the $k$-analytic space $X$ is called \defi{pluriharmonic} if $u$ and $-u$ are plurisubharmonic. We study pluriharmonic functions in Section~\ref{section: pluriharmonic functions}. We will deduce from the above properties of psh functions that pluriharmonic functions form a sheaf of $\R$-vector spaces, they are closed under local uniform limits (even pointwise limits if the limit is a continuous real function), they are stable under base extension and functorial with respect to pullbacks, and they satisfy the local maximum principle. As a consequence of the global maximum principle for $k$-affinoids, we will the following finiteness result in Theorem \ref{thm: finiteness of pluriharmonic}.

\begin{thm}\label{thm:harmonic.finite.dim}
  The set of pluriharmonic  functions on a quasicompact $k$-analytic space is a finite-dimensional real vector space.
\end{thm}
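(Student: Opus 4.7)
The plan is to exhibit an injective $\R$-linear map from the space of pluriharmonic functions on $X$ into a finite-dimensional real vector space, using the global maximum principle on $k$-affinoids (Theorem~\ref{thm:max.on.shilov.bdy}) together with the finiteness of Shilov boundaries.

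Since $X$ is quasi-compact, I would first choose a finite covering $X = X_1 \cup \cdots \cup X_N$ by $k$-affinoid domains $X_i = \sM(\sA_i)$ drawn from an atlas of $X$. Each $X_i$ has a finite Shilov boundary $\Gamma_i$ by \cite[Proposition~2.4.4]{berkovic90:analytic_geometry}, so the union
\[
  \Gamma \;:=\; \bigcup_{i=1}^N \Gamma_i
\]
is a finite subset of $X$. Let $V$ denote the real vector space of pluriharmonic functions on $X$; its elements are real-valued by definition. Consider the $\R$-linear evaluation map
\[
  \mathrm{ev}\colon V \longrightarrow \R^\Gamma, \qquad u \longmapsto \bigl(u(\gamma)\bigr)_{\gamma\in\Gamma}.
\]
It suffices to prove that $\mathrm{ev}$ is injective, since then $\dim_\R V \leq |\Gamma| < \infty$.

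For injectivity, suppose $u \in V$ satisfies $u|_\Gamma = 0$. For each $i$, the restriction $u|_{X_i}$ is pluriharmonic on $X_i$ by functoriality under pullback along the inclusion $X_i \hookrightarrow X$; in particular both $u|_{X_i}$ and $-u|_{X_i}$ are classically psh on the $k$-affinoid space $X_i$. Applying Theorem~\ref{thm:max.on.shilov.bdy} to $u|_{X_i}$ yields $\sup_{X_i} u = \max_{\Gamma_i} u = 0$, and applying it to $-u|_{X_i}$ yields $\inf_{X_i} u = \min_{\Gamma_i} u = 0$. Hence $u$ vanishes identically on each $X_i$, and therefore on all of $X$.

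The main obstacle, such as it is, is purely bookkeeping: one must know that a quasi-compact Berkovich space admits a finite affinoid atlas (standard in Berkovich's framework) and that pluriharmonicity is preserved under restriction to affinoid subdomains (a consequence of pullback functoriality). Granting the basic properties of pluriharmonic functions promised in Section~\ref{section: pluriharmonic functions}, the theorem is essentially a formal consequence of Theorem~\ref{thm:max.on.shilov.bdy}; the role of quasi-compactness is precisely to make $\Gamma$ finite.
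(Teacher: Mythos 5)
Your proof is correct and follows essentially the same route as the paper: cover $X$ by finitely many affinoid domains, let $\Gamma$ be the (finite) union of their Shilov boundaries, and use the global affinoid maximum principle applied to both $u$ and $-u$ to show that evaluation $\PH(X)\to\R^\Gamma$ is injective. The paper merely packages the per-affinoid step as a separate proposition (Proposition~\ref{pluriharmonic functions and affinoid maximum principle}) before assembling the global statement, so there is no substantive difference.
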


At the end of Section \ref{section: pluriharmonic functions}, we will see that under certain regularity assumptions on the $k$-analytic space $X$, pluriharmonic functions are $\R$-piecewise linear. We do not know if this is true without the regularity assumptions.

\subsection{Acknowledgments}
We thank Michael Termkin for hinting us to the crucial reference for Lemma \ref{lem:base.change.boundaries}.  We also thank Vladimir Berkovich, Mattias Jonsson, and Alex Youcis for comments on an earlier draft of the paper.
We are very grateful to a referee of \cite{GR25semipositivePL} for hinting us to Theorem \ref{classically psh and pointwise limits}.

\section{Preliminaries} \label{section: preliminaries}

In this preliminary section, we gather the foundations used in this paper. While \artref{general notation and conventions}--\artref{extension of scalars} and the notion of the relative boundary in \artref{art:relative.interior} are crucial for the whole paper, the other subsections can be read later when we refer to them.

\subsection{General notation and conventions} \label{general notation and conventions}
In this paper, $k$ will always denote a \defi{non-Archimedean field}, that is,  a field $k$ equipped with a  non-Archimedean complete absolute value $|\phantom{a}|\colon k\to \R$, which may be trivial.

The set of natural numbers $\N$ includes $0$. We allow equality in a set-theoretic inclusion $S \subset T$.
For an  abelian group $P$, let $P_\Lambda \coloneqq P \otimes_\Z \Lambda$.
For a ring $A$, let $A^\times$ be the group of invertible elements.

A topological space is called \defi{compact} if it is quasi-compact and Hausdorff.

\subsection{Ground field and extensions}  \label{Ground field and extensions} \label{art:simultaneous.extensions}
The valuation ring of $k$ is denoted $k^\circ \coloneqq \{\alpha \in k \mid |\alpha| \leq 1\}$, its unique maximal ideal is $k^{\circ\circ}\coloneqq \{\alpha \in k \mid |\alpha| <1 \}$, and its residue field is $\td k \coloneqq k^\circ/k^{\circ\circ}$.
An \defi{analytic extension field} is a non-Archimedean field $k'$ endowed with an isometric embedding $k\inject k'$. For analytic field extensions $k'$ and $k''$ of $k$, there is a joint analytic field extension $F$ of $k'$ and of $k''$: see~\artref{fiber product}.

\subsection{Affinoid algebras} \label{Affinoid algebras}
Consider the polynomial ring $k[T_1,\dots,T_n]$  over~$k$.  For a collection of weights $r_1,\ldots,r_n\in\R_{>0}$, the \defi{weighted Gauss norm} on $k[T_1,\ldots,T_n]$ is given on $f= \sum_{\lambda \in \N^n} a_\lambda T^\lambda$ by
$$\|f\|_r= \max_{\lambda \in \N^n} |a_\lambda| r_1^{\lambda_1} \cdots r_n^{\lambda_n}.$$
We denote by $k\langle r_1^{-1}T_1,\dots, r_n^{-1}T_n \rangle$ the completion of $k[T_1,\dots,T_n]$ with respect to the weighted Gauss norm.
A \defi{$k$-affinoid algebra} is a Banach algebra $\sA$ over $k$ isomorphic as a $k$-algebra to $k\langle r_1^{-1}T_1,\dots, r_n^{-1}T_n \rangle /I$  for some $n \in \N$ and for some (closed) ideal $I$ of $k\langle r_1^{-1}T_1,\dots, r_n^{-1}T_n \rangle$, such that the norm of $\sA$ is equivalent to the residue norm on $k\langle r_1^{-1}T_1,\dots, r_n^{-1}T_n \rangle /I$. If we can choose all $r_i=1$, then $\sA$ is called a \defi{strictly $k$-affinoid algebra}.

\subsection{Affinoid spaces} \label{Affinoid spaces}
We denote by $\sM(\sA)$ the \defi{Berkovich spectrum} of a $k$-affinoid algebra $\sA$. It is the set of bounded multiplicative seminorms $p$ on $\sA$ with $p(1)=1$. We usually denote the points of $X=\sM(\sA)$ by $x$ instead of $p$, and we write $|f(x)|$ instead of $p(f)$ for $f \in \sA$. We endow $X$ with the topology generated by the functions $(|f(\cdot)|)_{f \in \sA}$. Then $X$ is a compact space endowed with a canonical sheaf of rings $\sO_X$, which we call a \defi{$k$-affinoid space}.  If $\sA$ is strictly $k$-affinoid, then $\sM(\sA)$ is called a \defi{strictly $k$-affinoid space}.  See~\cite{berkovic90:analytic_geometry} for details.

The \defi{Shilov boundary} of a $k$-affinoid space $X = \sM(\sA)$ is the unique minimal subset $\Gamma$ of $X$ such that
$$\max_{x \in X} |f(x)| = \max_{x \in \Gamma} |f(x)|$$
for all $f \in \sA$.  A $k$-affinoid space has a finite Shilov boundary by~\cite[Corollary 2.4.5]{berkovic90:analytic_geometry}.

The supremum seminorm $|\phantom{a}|_{\sup}$ on $X=\sM(\sA)$  is used to define the $k^\circ$-algebra $\sA^\circ \coloneqq \{f \in \sA \mid |f|_{\sup}\leq 1\}$, its ideal of topologically nilpotent elements $\sA^{\circ\circ} \coloneqq \{f \in \sA \mid |f|_{\sup}< 1\}$, and the \defi{canonical reduction} $\td\sA\coloneqq \sA^\circ/\sA^{\circ\circ}$.

\subsection{Non-archimedean analytic spaces} \label{non-archimedean analytic spaces}
The $k$-affinoid spaces form the building blocks for $k$-analytic spaces. Roughly speaking, a \defi{$k$-analytic space} $X$ is given by an atlas of compact charts formed by $k$-affinoid spaces. We refer to \cite[\S 1.2]{berkovic93:etale_cohomology} for the precise definition of the category of $k$-analytic spaces and properties. If we can use strictly $k$-affinoid spaces for the charts of $X$, then we call $X$ a \defi{strictly $k$-analytic space.} A $k$-analytic space $X$ is called \defi{good} if every $x \in X$ has a $k$-affinoid neighborhood. These were the spaces considered in \cite{berkovic90:analytic_geometry}.

\subsection{Balls and discs} \label{polydiscs and balls}
We call
$\bB^n \coloneqq \sM(k\langle T_1,\dots, T_n \rangle)$ the \defi{closed unit ball of dimension $n$}.  The \defi{open unit ball} is the open subset $\bB_+^n$ of $\bB^n$ given by
$$\bB_+^n \coloneqq \{x \in X \mid |T_1(x)|<1, \dots, |T_n(x)|<1\}.$$
If $n=1$, then we get the  \defi{closed unit disc} $\bB^1$ and the \defi{open unit disc} $\bB_+^1$. The topology of $\bB^1$ can be described explicitly in terms of closed discs $\bB^1(x,\rho)$ and open discs $\bB_+^1(x,\rho)$ with center $x$ and radius $\rho$, as explained in Appendix~\ref{topology of unit disc}.

\subsection{Grothendieck topology and fibers} \label{Grothendieck topology and fibers}
For a $k$-analytic space $X$, we will also consider the $\G$-topology generated by the analytic domains of $X$, as described in~\cite[\S 1.3]{berkovic93:etale_cohomology}. We denote this space by $X_\G$ to emphasize the use of this Grothendieck topology. For strictly $k$-analytic spaces, we often use the $\G$-topology generated by the strictly analytic domains instead: see~\cite[\S 1.6]{berkovic93:etale_cohomology}.  In either case,
the space $X_\G$ has a well-behaved sheaf of rings $\sO_{X_\G}$. If $V=\sM(\sA)$ is an affinoid domain of $X$, then we have $\sO_{X_\G}(V)=\sA$. For $x \in X$, we denote by $\sH(x)$ the completed residue field of the local ring $\sO_{X_\G,x}$ with respect to the valuation on $\sO_{X_\G,x}/\fm_x$ induced by the norm corresponding to $x$.  If $V$ is any analytic domain in $X$ and if $x \in V$, then $\sH(x)$ is the same whether we consider $x$ as belonging to  $V$ (which is itself a $k$-analytic space), or to $X$.
We call $x \in X$ a \defi{rig-point} if $\sH(x)$ is a finite extension of $k$.

If $\varphi\colon Y \to X$ is a morphism, then the fiber $\varphi^{-1}(x)$ over $x \in X$ is an $\sH(x)$-analytic space, and the underlying topological space has the induced topology \cite[\S 1.4]{berkovic93:etale_cohomology}.

\subsection{Fiber products} \label{fiber product}
The category of $k$-analytic spaces admits fiber products by~\cite[\S 1.4]{berkovic93:etale_cohomology}. For analytic spaces $X,Y$ over a $k$-analytic space $S$, we denote the fiber product by $X \times_S Y$. Given $x \in X$ and $y \in Y$ over the same point $s \in S$, there always exists $z \in X \times_S Y$ mapping to $x$ and to $y$ with respect to the canonical projections~\cite[1.2.13]{ducros18:families}. For convenience, we repeat the argument. The completed tensor product $\sH(x)\hat\otimes_{\sH(s)} \sH(y)$ contains $\sH(x) \otimes_{\sH(s)} \sH(y)$ by a result of Gruson \cite[\S 3.2 Th\'eor\`eme 1(4)]{gruson66}, so $\sH(x)\hat\otimes_{\sH(s)} \sH(y)\neq 0$. The Berkovich spectrum of any nonzero Banach ring is nonempty \cite[Theorem 1.2.1]{berkovic90:analytic_geometry}, and any $z' \in\sM\bigl(\sH(x)\hat\otimes_{\sH(s)} \sH(y)\bigr)$ induces a point $z \in X \times_S Y$ mapping to $x$ and $y$. Note that $\sH(z)$ is a simultaneous analytic extension field of $\sH(x)$ and of $\sH(y)$.

Since Gruson's result holds for Banach algebras over a non-Archimedean field, the same argument shows that for two analytic field extensions of $k$, there is a joint analytic extension of both.

\subsection{Extension of scalars} \label{extension of scalars} \label{art:base.change}
Let $k'/k$ be an an analytic extension field. Then there is a base extension functor $(\cdot)_{k'}$ which associates to a $k$-analytic space $X$ a $k'$-analytic space $X_{k'}$, given on a $k$-affinoid space $X=\sM(\sA)$ by the $k'$-affinoid space $X_{k'}=\sM(\sA \hat{\otimes}_k k')$ \cite[\S 1.4]{berkovic90:analytic_geometry}.  We denote by
$\pi_{k'/k}\colon X_{k'} \to X$
the structure map,  which is closed and surjective by~\cite[Lemma 2.19]{gubler_rabinoff_werner:tropical_skeletons}, \cite[(3.1.1.2)]{ducros14:structur_des_courbes_analytiq}.

\subsection{The Shilov section}\label{art:shilov.section}
For $r=(r_1,\dots,r_n) \in \R_{>0}^n$, we consider the $k$-affinoid domain in the closed polydisc $ \sM\bigl(k\angles{ r_1^{-1}T_1, \dots,r_n^{-1}T_n}\bigr)$ given by the equations $|T_j(x)|=r_j$, $j=1,\dots,n$. The corresponding $k$-affinoid algebra is denoted by $k_r$; it consists of the Laurent series $f=\sum_{\lambda \in \Z^n}a_\lambda T^\lambda$ such that $a_\lambda r^\lambda \to 0$ for $|\lambda|\to \infty$.  The $k$-affinoid algebra $\sM(k_r)$ has a unique Shilov boundary point equal to the weighted Gauss point $\eta_r\in\sM\bigl(k\angles{ r_1^{-1}T_1, \dots,r_n^{-1}T_n}\bigr)$ given by the weighted Gauss norm $\|\phantom a\|_r$ from \artref{Affinoid algebras}.  If $r_1, \dots, r_n$ induce linearly independent elements in the $\Q$-vector space $\R_{> 0}/\sqrt{|k^\times|}$, then $k_r$ is a non-Archimedean field; it follows that $k_r = \sH(\eta_r)$.  For every $k$-affinoid algebra $\sA$, there is always such an $r$ such that $\sA \hat\otimes k_r$ is a strictly $k_r$-affinoid algebra.  See~\cite[pp.~21--22]{berkovic90:analytic_geometry}.

For any $k$-analytic space $X$ and $r  \in \R_{>0}^n$, the structure map $\pi_{k_r/k}\colon X_{k_r} \to X$ has a canonical section $\sigma\colon X \to X_r$ mapping $x \in X$ to the unique Shilov boundary point of the fiber $\pi_{k_r/k}^{-1}(x)=\sM(\sH(x)\hat\otimes k_r) = \sM(\sH(x)_r)$ (which is the weighted Gauss point $\eta_r$). We call $\sigma$ the \defi{Shilov section}.  The Shilov section is continuous: see~\cite[Lemma 3.2.2(i)]{berkovic90:analytic_geometry}, \cite[1.2.16]{ducros18:families}.

\subsection{Relative boundary} \label{relative boundary} \label{art:relative.interior}
An important notion in this paper is that of the \defi{relative boundary} of a morphism $\varphi\colon X \to Y$ of  $k$-analytic spaces. This is a closed subset of $X$ which we denote by $\partial(X/Y)$.
We will not give the details of this definition, but let us try to explain the literature on the subject. In \cite[\S 2.5]{berkovic90:analytic_geometry}, Berkovich defines the relative boundary $\partial(X/Y)$ for $k$-affinoid spaces $X$ and $Y$. Then in \cite[\S 3.1]{berkovic90:analytic_geometry}, he generalizes $\partial(X/Y)$ for good $k$-analytic spaces  using that all points of $X$ and $Y$ have $k$-affinoid neighborhoods. Berkovich calls $\varphi$ \defi{closed} if  $\partial(X/Y)=\emptyset$. Berkovich defines $\del (X/Y)$ for arbitrary $k$-analytic spaces in~\cite[1.5.3(ii)]{berkovic93:etale_cohomology}, as follows.  A morphism $\varphi$ is \defi{closed} if for every morphism $Y'\to Y$ from a good $k$-analytic space $Y'$, the fiber product $X' \coloneqq X\times_Y Y'$ is good and the canonical morphism $X' \to Y'$ is closed in the previous sense. Finally in \cite[Definition 1.5.4]{berkovic93:etale_cohomology}, a point $x \in X$ is defined to be in $\Int(X/Y)$ if $x$ has an open neighborhood $U$ such that $\varphi$ induces a closed morphism $U \to Y$ in the above sense. We refer to \cite[Definition 4.2.4.1]{temkin15:berkovich_intro} for a more explicit equivalent definition of $\partial(X/Y)$. Note that in this paper, we will use the notion \emph{closed map} in the usual topological sense meaning that it maps closed sets to closed sets. This is different from Berkovich's terminology.

The complement $\Int(X/Y)\coloneqq X \setminus \partial(X/Y)$ is called the \defi{relative interior}.  We will usually apply this in the case $Y=\sM(k)$, in which case we call $\partial X \coloneqq \partial(X/\sM(k))$ the \defi{boundary} of $X$ and $\Int(X) \coloneqq X\setminus\del X$ the \defi{interior}. If $\partial X = \emptyset$, we say that $X$ \emph{boundaryless}. A boundaryless space is good by definition. Note that Berkovich calls boundaryless $k$-analytic spaces closed.

We list here some properties of the boundary and interior which we will use throughout the paper:
\begin{enumerate}
\item \label{local on source}
  A point $x\in X$ is contained in $\Int(X/Y)$ if and only if there is an open neighborhood $U$ of $x$ such that $\Int(U/Y) = U$: this is part of the above definition.
\item \label{finite and boundary}
  If $\varphi\colon X \to Y$ is finite, then $\Int(X/Y)=X$ \cite[Example 1.5.3(iii)]{berkovic93:etale_cohomology}.
\item \label{relation to topological boundary}
  If $X$ is an analytic domain in $Y$, then $\Int(X/Y)$ is  the topological interior of $X$ in $Y$ \cite[Proposition 1.5.5(i)]{berkovic93:etale_cohomology}.
\item \label{transitivity of boundaries}
  If $X \stackrel{\varphi}{\rightarrow} Y \stackrel{\psi}{\rightarrow} S$ are morphisms of $k$-analytic spaces, then
  \begin{equation} \label{eq:Int.composition}
    \Int(X/Y) \cap \varphi^{-1}\Int(Y/S) \subset \Int(X/S)
  \end{equation}
  \cite[Proposition 1.5.5(ii)]{berkovic93:etale_cohomology}, with equality if $\psi$ is locally separated \cite[Corollary 5.7]{temkin04:local_properties_II}.  Taking $S = \sM(k)$, this becomes
  \begin{equation}
    \label{eq:Int.morphism}
    \Int(X/Y)\cap\phi\inv\Int(Y)\subset\Int(X).
  \end{equation}
\item \label{boundary and base change}
  If $k'/k$ is an analytic extension field, then
  \[ \pi_{k'/k}^{-1}\Int(X/Y) \subset \Int(X_{k'}/Y_{k'}) \]
  \cite[Proposition 1.5.5(iii)]{berkovic93:etale_cohomology}.  Taking $Y = \sM(k)$, this becomes
  \[ \pi_{k'/k}\inv\Int(X)\subset\Int(X_{k'}). \]
\end{enumerate}
For convenience, we restate these properties in terms of the relative boundary:
\begin{enumerate}[label=(\arabic*$'$)]
\item \label{local on source'}
  A point $x\in X$ is contained in $\del(X/Y)$ if and only if $\del(U/Y)\neq\emptyset$ for every open neighborhood $U$ of $x$.
\item \label{finite and boundary'}
  If $\varphi$ is finite, then $\del(X/Y)=\emptyset$.
\item \label{relation to topological boundary'}
  If $X$ is an analytic domain in $Y$, then $\partial(X/Y)$ is  the topological boundary of $X$ in~$Y$.
\item \label{transitivity of boundaries'}
  If $X \stackrel{\varphi}{\rightarrow} Y \stackrel{\psi}{\rightarrow} S$ are morphisms of $k$-analytic spaces, then
  \begin{equation}\label{eq:del.composition}
    \del(X / S) \subset \del(X / Y)\cup\phi\inv(\del(Y / S)),
  \end{equation}
  with equality if $\psi$ is locally separated.  Taking $S = \sM(k)$, this becomes
  \begin{equation}
    \label{eq:del.morphism}
    \del X \subset \del(X / Y)\cup\phi\inv(\del Y).
\end{equation}
\item \label{boundary and base change'}
  If $k'/k$ is an analytic extension field, then
  \[ \del(X_{k'}/Y_{k'}) \subset \pi_{k'/k}^{-1}\del(X/Y). \]
  Taking $Y = \sM(k)$, this becomes
  \[ \del X_{k'}\subset\pi_{k'/k}\inv\del X. \]
\end{enumerate}
We will need the following lemma, which is a consequence of deep results in~\cite{conrad_temkin21:descent} in general, although it has an elementary proof for good spaces.

\begin{lem}\label{lem:base.change.boundaries}
  If $X$ is a $k$-analytic space and $k'/k$ is an analytic field extension, then
  \[ \pi_{k'/k}(\del X_{k'}) = \del X. \]
\end{lem}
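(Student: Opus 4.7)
The containment $\pi_{k'/k}(\del X_{k'}) \subset \del X$ is immediate from property~\ref{boundary and base change'}: applying $\pi_{k'/k}$ to the inclusion $\del X_{k'} \subset \pi_{k'/k}^{-1}(\del X)$ gives the claim. The substance is therefore the reverse containment $\del X \subset \pi_{k'/k}(\del X_{k'})$, which I would prove first for good $X$ by an elementary Shilov-boundary argument, and then invoke the descent machinery of Conrad-Temkin for the general case.

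In the good case, fix $x \in \del X$ and choose an affinoid neighborhood $V = \sM(\sA)$ of $x$. Since $X$ is Hausdorff, the structure morphism $X \to \sM(k)$ is separated (hence locally separated), so equality holds in property~\ref{transitivity of boundaries'} applied to $V \hookrightarrow X \to \sM(k)$. Combined with property~\ref{relation to topological boundary'} (which identifies $\del(V/X)$ with the topological boundary of $V$ in $X$), the fact that $V$ is a neighborhood of $x$ forces $x \in \del V$. As $V$ is $k$-affinoid, $\del V$ coincides with the Shilov boundary of $V$. The key classical fact I would invoke is that $\pi_{k'/k}\colon V_{k'} \to V$ sends the Shilov boundary of $V_{k'}$ onto that of $V$; this follows from the invariance of the supremum seminorm under completed tensor product with $k'$ together with Temkin's reduction theory for affinoid algebras. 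Pick a preimage $x' \in \pi_{k'/k}^{-1}(x)$ in the Shilov boundary of $V_{k'}$. Since $V$ is a neighborhood of $x$, continuity of $\pi_{k'/k}$ shows that $V_{k'}$ is an affinoid neighborhood of $x'$ in $X_{k'}$, and running the same transitivity argument in reverse yields $x' \in \del X_{k'}$.

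For general (non-good) $X$, a point need not admit any affinoid neighborhood, so the localization above collapses. Here I would cite the relevant descent results of Conrad-Temkin~\cite{conrad_temkin21:descent} as a black box, as the paper explicitly anticipates. This general case is the principal technical obstacle: without affinoid neighborhoods there is no direct reduction to the Shilov-boundary statement, and one must work instead with the graded reduction and descent formalism. The good case, by contrast, reduces cleanly to the well-understood base-change behavior of the Shilov boundary of an affinoid.
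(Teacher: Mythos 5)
Your first containment and the reduction ``$x\in\del X$ forces $x\in\del V$ for a neighborhood $V$'' are fine, but the good case then breaks on a false identification: for a $k$-affinoid space $V$ of dimension $\geq 2$, the relative boundary $\del V$ is \emph{not} the Shilov boundary. The Shilov boundary is always finite, whereas $\del V$ is typically infinite: already for $V=\bB^2$ the Shilov boundary is the single Gauss point, while $\del\bB^2$ is a union of positive-dimensional strata (this is exactly the content of Lemma~\ref{stratification of the boundary'}, and Lemma~\ref{Shilov boundary vs usual boundary} only gives the containment of the Shilov boundary \emph{in} $\del V$; the equality you want is the curve case, \artref{art:curve.facts}(3)). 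Consequently your point $x\in\del V$ need not lie in the Shilov boundary of $V$ at all, and the step ``pick a preimage $x'$ in the Shilov boundary of $V_{k'}$'' has nothing to bite on. The base-change behaviour of the Shilov boundary, correct as it is, cannot by itself produce a boundary point of $V_{k'}$ over an arbitrary $x\in\del V$.

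The general case is also not dispatched by waving at Conrad--Temkin: the whole content of the lemma is in identifying \emph{which} descent statement to use and how. The paper's proof runs a single argument (no good/non-good dichotomy): after replacing $X$ by a compact neighborhood $U$ of $x$ with $x\in\del U$, one assumes for contradiction that the compact set $\del X_{k'}$ misses $\pi_{k'/k}\inv(x)$, produces an open $V\ni x$ with $\pi_{k'/k}\inv(V)\subset\Int(X_{k'})$, hence $\del V_{k'}=\emptyset$, and then invokes the precise statement that boundarylessness can be checked after extension of scalars (\cite[Theorem~11.5]{conrad_temkin21:descent}) to conclude $\del V=\emptyset$, contradicting $x\in\del V$. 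If you want to salvage your outline, you would need either this compactness-plus-descent argument, or, in the affinoid case, Berkovich's explicit characterization of $\Int(X/Y)$ via inner homomorphisms (or Ducros's stratification of $\del X$ by fibers over Gauss points as in Lemma~\ref{stratification of the boundary'}) in place of the Shilov boundary.
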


\begin{proof}
  We have $\pi_{k'/k}(\del X_{k'})\subset\del X$ by~\artref{relative boundary}\ref{boundary and base change'}.  Let $x\in\del X$, and let $U\subset X$ be a compact neighborhood of $x$.  We claim that $x\in\del U$.  If not, then there is an open neighborhood $V$ of $x$ in $U$ such that $\del V = \emptyset$ by~\artref{relative boundary}\ref{local on source'}.  If $V' \subset V$ is a smaller open neighborhood of $x$ in $X$, then $\del V' = \emptyset$ as well by~\artref{relative boundary}\ref{relation to topological boundary'} and by~\eqref{eq:del.morphism} as applied to $V' \to V \to \sM(k)$.  Thus we may assume that $V$ is open in $X$, which implies by~\artref{relative boundary}(\ref{local on source})
  that $x\in\Int(X)$, a contradiction.  Hence $x\in\del U$.  If $x'\in\pi_{k'/k}\inv(x)$, then $x'\in\del U_{k'}$ if and only if $x'\in\del X_{k'}$ for the same reason.  We need to show $\pi_{k'/k}\inv(x)\cap\del X_{k'}\neq\emptyset$, so we may replace $X$ by $U$ to assume that $X$ is compact.  In this case, the closed subset $\del X_{k'}$ is compact; suppose that it is disjoint from $\pi_{k'/k}\inv(x)$.  Then $\pi_{k'/k}(\del X_{k'})$ is a closed subset of $X$ not containing $x$, so there exists an open neighborhood $V$ of $x$ such that $V_{k'} = \pi_{k'/k}\inv(V)\subset\Int(X_{k'})$.  This means that $\del V_{k'}=\emptyset$ by~\artref{relative boundary}(\ref{relation to topological boundary},\ref{transitivity of boundaries}).  But $x\in\del V$, and boundarylessness can be checked after extension of scalars by~\cite[Theorem~11.5]{conrad_temkin21:descent}. This contradiction proves the claim.
\end{proof}

\begin{art}\label{rig-smooth}
  A $k$-analytic space $X$ is called \emph{rig-smooth} at $x \in X$ if it is flat (in the sense of \cite{ducros18:families}) and if the sheaf of K\"ahler differentials $\Omega_X$ (considered on the $\G$-topology) is free at $x$ of rank equal to the dimension of $X$ at $y$. We call $X$ \emph{smooth} if it is rig-smooth and boundaryless. We refer to \cite[Chapter 5]{ducros18:families} for a discussion (note that rig-smooth is called quasi-smooth there) and comparison to other definitions.
\end{art}

\subsection{Models and formal geometry} \label{subsection: models and formal geometry}
A quasi-compact \emph{admissible formal scheme}  over $k^\circ$ is a quasi-compact  formal scheme $\fX$ over $k^\circ$ with an open covering by affine formal schemes $\Spf(A)$ for $k^\circ$-algebras $A$ which are topologically of finite type and flat over $k^\circ$. The \emph{generic fiber} $\fX_\eta$ of $\fX$ is the strictly $k$-analytic space obtained by gluing the Berkovich spectra $\sM(A \otimes_{k^\circ} k)$, and the \defi{special fiber}  $\fX_s$ is the scheme over $\td k$ obtained by gluing the affine schemes $\Spec(A \otimes_{k^\circ} \td k)$.

\begin{defn} \label{formal models}
	A \defi{formal $k^\circ$-model} $\fX$ for a quasi-compact strictly $k$-analytic space $X$ is a quasi-compact admissible formal scheme $\fX$ over $k^\circ$ with an identification $\fX_\eta=X$.
\end{defn}

In the above setting, there is a canonical \defi{reduction map} $\red_{\fX}\colon X \to \fX_s$ which is surjective and anticontinuous \cite[Section 2]{gubler_rabinoff_werner16:skeleton_tropical}. We call $x \in X$ a \defi{divisorial point associated to the model $\fX$} if $\red_\fX(x)$ is a generic point of the special fiber $\fX_s$. Note that the set of divisorial points is finite: see \cite[Appendix A]{gubler_martin19:zhangs_metrics}. In {\it ibid}, the valuation on $k$ is assumed to be non-trivial, but this finiteness also holds in the trivially valued case as then $\fX_\eta$ is just the analytification of the special fiber and the generic points of the special fiber have a unique preimage under the reduction.

\subsection{PL functions} \label{subsection: PL functions}
We will assume $k$ is non-trivially valued when we discuss PL functions.
Let $\Lambda$ be a subgroup of $\R$ which is either divisible or equal to $\Z$. We consider a good, strictly $k$-analytic space $X$.
	A function $h\colon X \to \R$ is called \emph{$\Lambda$-piecewise linear} or \emph{$\Lambda$-PL} if $\G$-locally we have $h=\sum_j \lambda_j \log |f_j|$ for finitely many $\lambda_j \in \Lambda$ and invertible analytic functions $f_j$.  Here we use the $\G$-topology of $X$ generated by the strictly $k$-analytic domains. We refer to \cite[Section 5]{gubler_rabinoff_jell:harmonic_trop} for more on $\Lambda$-PL functions.

        \subsection{Reduction of germs} \label{art:reduction.of.germs}
        For a good, strictly $k$-analytic space $X$ over a non-trivially valued field, Temkin \cite{temkin00:local_properties} introduced the reduction of the germ $(X,x)$ at $x \in X$ as an open subset of a Zariski--Riemann space $\P_{\td\sH(x)/\td{k}}$ of $\td\sH(x)/\td{k}$. Roughly speaking, it is the projective limit of all canonical reductions $\Spec(\td{\sA})$ where $\sM(\sA)$ ranges over all $k$-affinoid neighborhoods of $x$. The germ $(X,x)$ is naturally a quasicompact locally ringed space.  See also \cite[Section 6.1]{chambert_ducros12:forms_courants} for an excellent summary.  An $\R$-PL function $h\colon X \to \R$ induces a \defi{residue line bundle} $L(h) \in \Pic(X,x)_\R$.  This was introduced in~\cite{chambert_ducros12:forms_courants}, and is explained in~\cite[Section 6]{gubler_rabinoff_jell:harmonic_trop}.  We call $h$ \defi{semipositive at $x$} if $L(h)$ is nef in the sense of~\cite[Section 7]{gubler_rabinoff_jell:harmonic_trop}. A \defi{semipositive} $\R$-PL function is semipositive at all $x \in X$.

\subsection{Graded reduction} \label{graded reduction}
While the reduction in \ref{art:reduction.of.germs} works well for (good) strictly $k$-analytic spaces, this concept does not fit very well for non-strictly $k$-analytic spaces or in the trivially valued case. To handle the general case, Temkin introduced~\cite{temkin04:local_properties_II} the graded reduction of the germ $(X,x)$. We need it only for Lemma \ref{stratification of the boundary'}, so we introduce here only the \defi{graded reduction} of a $k$-algebra $A$ endowed with a multiplicative seminorm $\rho$. In our applications, the algebra $A$ will be either a $k$-affinoid algebra endowed with $|\phantom{a}|_{\sup}$ or the completed residue field $\sH(x)$ in a point $x$ of a $k$-analytic space endowed with its canonical absolute value. For $r \in \R_{>0}$, we set
$$A^{\leq r} \coloneqq \{a \in A \mid \rho(a) \leq r\}\quad \text{and} \quad A^{<r} \coloneqq \{a \in A \mid \rho(a)<r\}.$$
Then the \defi{graded residue field} of $A$ is given by
$$\td{A}^\bullet \coloneqq \bigoplus_{r \in \R_{>0}} A^{\leq r}/A^{<r}.$$
We refer to \cite{temkin04:local_properties_II}   for properties of this functorial construction.

\section{Subharmonic functions on analytic curves} \label{section:subharmonic.functions}

In this section, we generalize Thuillier's theory \cite{thuillier05:thesis} of subharmonic functions from
smooth strictly analytic curves to arbitrary $k$-analytic curves.

\begin{art}[$k$-Analytic Curves]\label{art:curve.facts}
  By a \defi{$k$-analytic curve} we mean a separated $k$-analytic space of pure dimension $1$.  We gather the following facts about $k$-analytic curves:
  \begin{enumerate}
  \item A $k$-analytic curve is \emph{good}, i.e., every point has an affinoid neighborhood. \cite[Proposition~3.3.7]{ducros14:structur_des_courbes_analytiq}
  \item A $k$-analytic curve is \emph{paracompact}. \cite[Th\'eor\`eme~4.5.10]{ducros14:structur_des_courbes_analytiq}
  \item If $X$ is an affinoid $k$-analytic curve, then $\del X$ coincides with the Shilov boundary of $X$.~\cite[2.1.2]{thuillier05:thesis}
  \end{enumerate}
  An analytic domain in a $k$-analytic curve (for example, an open subset) is again a $k$-analytic curve.
\end{art}

\begin{art}[Thuillier's Harmonic Functions]\label{sec:thuillier.harmonic}
  Thuillier~\cite[\S2.3]{thuillier05:thesis} defines a sheaf of \defi{harmonic} functions on a smooth, strictly $k$-analytic curve for a non-trivially valued $k$, roughly as follows.  If $X$ is affinoid, then a continuous function $h\colon X\to\R$ is harmonic if, after passing to a finite, separable field extension, it coincides with the composition of a harmonic function on a skeleton of $X$ (in the sense of graph theory) with the retraction to the skeleton.  If $X$ is smooth, then a harmonic function on an open subset of $X$ is defined to be a continuous function whose restriction to every strictly affinoid subdomain is harmonic.
\end{art}

\begin{art}[Thuillier's Subharmonic Functions]\label{sec:thuillier.subharmonic}
  For a non-trivally valued $k$, Thuillier~\cite[D\'efinition~3.1.5]{thuillier05:thesis} defines a \defi{subharmonic} function on a smooth, strictly $k$-analytic curve $X$ to be an upper semicontinuous function $u\colon X\to\R\cup\{-\infty\}$ that is not identically $-\infty$ on any connected component of $X$ and that satisfies the following property: for every strictly $k$-affinoid domain $U\subset X$ and every harmonic function $h\colon U\to\R$, if $u\leq h$ on $\del U$ then $u\leq h$ on $U$.
\end{art}

  Now we extend this definition to the case of non-smooth curves.  The idea is that a function should be subharmonic if it pulls back to a subharmonic function along any morphism from a smooth curve.  However, an inseparable curve over an imperfect field does not admit any non-constant morphisms from a smooth curve, so one has to allow an extension of scalars as well.  We also consider the constant function $-\infty$ to be subharmonic, following Demailly~\cite{demailly85} in the complex case.

\begin{defn}[Subharmonic functions]\label{def:subharmonicity}
  Let $X$ be a $k$-analytic curve and let $u\colon X\to\R\cup\{-\infty\}$ be an upper semicontinuous function.  We say that $u$ is \defi{subharmonic} provided that, for every non-trivially valued analytic extension field $k'/k$, every smooth, connected, strictly $k'$-analytic curve $Y$, and every morphism $Y\to X_{k'}$, the composition
  \[ Y\To X_{k'} \overset{\pi_{k'/k}}\To X \overset{u}\To \R\cup\{-\infty\} \]
  is either subharmonic on $Y$ in the sense of~\cite[D\'efinition~3.1.5]{thuillier05:thesis}, or is identically~$-\infty$.
\end{defn}

\begin{artsub}\label{art:recover.thuillier.subharmonicity}
  If $X$ is a smooth, connected, strictly $k$-analytic curve over a non-trivially valued $k$, then an upper semicontinuous function $u\colon X\to\R\cup\{-\infty\}$ is subharmonic in our sense if and only if it is either subharmonic in Thuillier's sense or is identically $-\infty$.  Indeed, if $u$ is subharmonic in our sense then one can take $k=k'$ and $Y=X$ to see that it is either subharmonic in Thuillier's sense or is identically $-\infty$.  Conversely, if $u$ is identically $-\infty$ then it is clearly subharmonic in our sense, and if $u$ is subharmonic in Thuillier's sense then it is subharmonic in our sense by~\cite[Proposition~3.1.14 and Corollaire~3.4.5]{thuillier05:thesis}.
\end{artsub}

\begin{artsub}\label{art:subharmonicity.real.values}
  As a special case of~(\ref{art:recover.thuillier.subharmonicity}), if $X$ is a smooth, connected, strictly $k$-analytic curve over a non-trivially valued $k$, then a function $u\colon X\to\R$ (taking finite values) is subharmonic in our sense if and only if it is subharmonic in Thuillier's sense.
\end{artsub}

Here we collect the basic properties of subharmonic functions, which we will prove over the course of this section.

\begin{prop}\label{prop:subharmonic.properties}
  Let $X$ be a $k$-analytic curve and let $u\colon X\to\R\cup\{-\infty\}$ be an upper semicontinuous function.
  \begin{enumerate}
  \item\label{shp.sheafiness} \textup{(Sheafiness)}~ The subharmonic functions form a sheaf on $X$.
  \item\label{shp.analytic} \textup{(Analytic functions)}~ If $f\in\Gamma(X,\sO_X)$ then $\log|f| : X\to\R\cup\{-\infty\}$ is subharmonic.
  \item\label{shp.boundaries} \textup{(Boundaries)}~ If $u$ is subharmonic on $\Int(X)$, then $u$ is subharmonic on $X$.
  \item\label{shp.cone} \textup{(Cone Property)}~ If $u_1,u_2\colon X\to\R\cup\{-\infty\}$ are subharmonic and $\lambda_1,\lambda_2\in\R_{\geq 0}$ then $\lambda_1 u_1+ \lambda_2 u_2$ and $\max\{u_1,u_2\}$ are subharmonic.
  \item\label{shp.limits} \textup{(Limits)}~ The infimum of a decreasing net of subharmonic functions is subharmonic.
  \item\label{shp.maximum} \textup{(Maximum Principle)}~ If $u$ is subharmonic and $u$ attains a local maximum at $x\in\Int(X)$, then $u$ is constant in a neighborhood of $x$.
  \item\label{shp.scalars} \textup{(Extension of scalars)}~ Let $k'/k$ be an analytic extension field.  Then $u$ is subharmonic if and only if $u\circ\pi_{k'/k}\colon X_{k'}\to\R\cup\{\infty\}$ is subharmonic.
  \item\label{shp.functoriality} \textup{(Functoriality)}~ Let $Y$ be a $k$-analytic curve and let $f\colon Y\to X$ be a morphism.  If $u$ is subharmonic then $u\circ f$ is subharmonic, and the converse holds if $f$ is finite and surjective.
  \item\label{shp.-infty}  \textup{(Finite Values)}~ If $u$ is subharmonic and $u(x)=-\infty$ for a non-rig-point $x\in X$, then $u\equiv-\infty$ on the irreducible component of $\Int(X)\cup\{x\}$ containing $x$.
  \end{enumerate}
\end{prop}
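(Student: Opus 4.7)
The plan is to reduce each property to the analogous statement for Thuillier's subharmonic functions on smooth strictly analytic curves via Definition~\ref{def:subharmonicity} and~(\ref{art:recover.thuillier.subharmonicity}). Properties~\ref{shp.sheafiness}, \ref{shp.analytic}, \ref{shp.cone}, and~\ref{shp.limits} follow immediately: sheafiness, subharmonicity of $\log|f|$, closure under nonnegative linear combinations and finite maxima, and closure under decreasing nets are all preserved by pullback through morphisms $Y\to X_{k'}$, and the corresponding results are established in Thuillier's thesis for smooth curves.

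The ``only if'' direction of~\ref{shp.scalars} is tautological, since Definition~\ref{def:subharmonicity} already quantifies over all analytic extensions of $k$. For the converse, given a smooth connected strictly $k''$-analytic curve $Z$ with morphism $Z\to X_{k''}$, I would use~\ref{fiber product} to obtain a joint analytic extension $F/k$ of $k'$ and $k''$, base change to obtain $Z_F\to X_F=(X_{k'})_F$, apply the subharmonicity of $u\circ\pi_{k'/k}$ to this morphism, and descend from $Z_F$ to $Z$ using base change stability of Thuillier subharmonicity on smooth curves. The forward direction of~\ref{shp.functoriality} is immediate by composition, while its converse (for $f\colon Y\to X$ finite surjective) uses base change of $f$ together with Thuillier's descent of subharmonicity along finite surjective morphisms of smooth curves.

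Properties~\ref{shp.maximum} and~\ref{shp.-infty} then reduce to the smooth case via base change to a perfect analytic extension field $k'$ and normalization: the normalization $\nu\colon\tilde X_{k'}\to X_{k'}$ is a finite surjective morphism whose source is smooth. By the easy directions of~\ref{shp.scalars} and~\ref{shp.functoriality}, the pullback $u\circ\pi_{k'/k}\circ\nu$ is Thuillier subharmonic on $\tilde X_{k'}$; a local maximum, respectively an $-\infty$ value at a non-rig-point, at $x$ lifts to any preimage $\tilde x$, where Thuillier's corresponding results on smooth curves apply. Descent back to $X$ then uses surjectivity and closedness of $\nu$ and $\pi_{k'/k}$, the upper semicontinuity of $u$, and (for~\ref{shp.-infty}) the fact that a preimage of a non-rig-point under a finite map remains a non-rig-point.

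The main obstacle is~\ref{shp.boundaries}. Given a morphism $\varphi\colon Y\to X_{k'}$ from a smooth connected curve, the set $V\coloneqq\varphi^{-1}(\pi_{k'/k}^{-1}(\Int X))$ is open in $Y$, and the forward direction of~\ref{shp.functoriality} applied to the restriction $V\to(\Int X)_{k'}$ shows that $u\circ\pi\circ\varphi$ is Thuillier subharmonic on each connected component of $V$. The difficulty is to extend subharmonicity across $Y\setminus V=\varphi^{-1}(\pi^{-1}(\del X))$. The key geometric input is that $\del X$ is discrete in the $k$-analytic curve $X$: an affinoid cover has finite Shilov boundaries by~\ref{art:curve.facts}(3), and these boundaries contain $\del X$ locally. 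For non-constant $\varphi$ the exceptional set $Y\setminus V$ is then discrete in $Y$, while the constant case is trivial. A removable-singularities argument, exploiting the upper semicontinuity of $u$ together with Thuillier's local theory of subharmonic functions on smooth non-Archimedean curves, then extends Thuillier subharmonicity across these isolated bad points to all of $Y$.
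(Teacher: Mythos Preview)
Your overall strategy for parts~(1), (2), (4), (5), (7), (8) matches the paper's, and your sketches for~(6) and~(9) are in the right spirit (the paper is more careful: for~(6) it uses the Shilov section to reduce to the strictly analytic case first, and for~(9) it needs a separate lemma handling the case $x\in\del X$, proved via semistable reduction).  For~(1) you should be aware that the $-\infty$ convention requires a small argument: on each connected component of a smooth test curve one must check that ``identically $-\infty$'' and ``Thuillier-subharmonic'' cannot mix, which the paper handles explicitly.

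There is, however, a genuine gap in your argument for~(3).  You assert that for non-constant $\varphi\colon Y\to X_{k'}$ the set $Y\setminus V=\varphi^{-1}\bigl(\pi_{k'/k}^{-1}(\del X)\bigr)$ is discrete, reasoning from the discreteness of $\del X$ in $X$.  But $\pi_{k'/k}^{-1}(\del X)$ is \emph{not} discrete in $X_{k'}$: for a single boundary point $x\in\del X$, the fiber $\pi_{k'/k}^{-1}(x)=\sM\bigl(\sH(x)\hat\otimes_k k'\bigr)$ is a compact $\sH(x)$-affinoid space which can be quite large.  In particular there is no reason for $\varphi^{-1}$ of this fiber to be discrete in $Y$, and no removable-singularities argument is available.

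The paper's approach to~(3) is entirely different and avoids this problem.  Since $Y$ is smooth it is boundaryless, so by~\eqref{eq:Int.morphism} one has $\varphi(Y)\subset\Int(X_{k'})$.  The key geometric input (using Lemma~\ref{lem:fiber.boundary} and a connectedness argument from~\cite[5.3.4]{ducros14:structur_des_courbes_analytiq}) is that for each $x\in\del X$ the set $\pi_{k'/k}^{-1}(x)\cap\Int(X_{k'})$ is \emph{open and closed} in $\Int(X_{k'})$.  Hence the connected curve $Y$ either maps entirely into $\pi_{k'/k}^{-1}(\Int X)$, in which case subharmonicity on $\Int(X)$ suffices, or it maps entirely into a single fiber $\pi_{k'/k}^{-1}(x)$, in which case $u\circ\pi_{k'/k}\circ\varphi$ is \emph{constant} and there is nothing to prove.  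No extension across bad points is needed.
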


Let us make some remarks regarding the content of Proposition~\ref{prop:subharmonic.properties}.

\begin{artsub}
  If $U\subset X$ is an open subset then $U$ is again a $k$-analytic curve, so it makes sense to speak of subharmonic functions on $U$.  It is immediate from the definition that a subharmonic function on $X$ restricts to a subharmonic function on $U$.  Hence subharmonic functions form a \emph{presheaf} on $X$.  Proposition~\ref{prop:subharmonic.properties}(\ref{shp.sheafiness}) asserts that this presheaf is a sheaf.
\end{artsub}

\begin{artsub}
  The maximum principle (Proposition~\ref{prop:subharmonic.properties}(\ref{shp.maximum})) is~\cite[Proposition~3.4.10]{thuillier05:thesis} in the smooth, strictly analytic case.
\end{artsub}

\begin{artsub}\label{art:basechange.difficult}
  In the situation of Proposition~\ref{prop:subharmonic.properties}(\ref{shp.scalars}), it is immediate from Definition~\ref{def:subharmonicity} that $u\circ\pi_{k'/k}$ is subharmonic when $u$ is subharmonic.  This is the \emph{difficult} direction in the smooth, strictly analytic case: see~\cite[Corollaire~3.4.5]{thuillier05:thesis}.  This result is hidden in the fact that Definition~\ref{def:subharmonicity} essentially recovers Thuillier's definition of subharmonicity in that case~\artref{art:recover.thuillier.subharmonicity}.
\end{artsub}

\begin{artsub}
  In the situation of Proposition~\ref{prop:subharmonic.properties}(\ref{shp.functoriality}), it is immediate from Definition~\ref{def:subharmonicity} that $u\circ f$ is subharmonic when $u$ is subharmonic.  The remark in~\artref{art:basechange.difficult} applies to a lesser extent in this case: see~\cite[Proposition~3.1.14]{thuillier05:thesis}.
\end{artsub}

\begin{artsub} \label{unique irreducible}
  For any $x\in X$, the subset $\Int(X)\cup\{x\}$ is \emph{open} in $X$.  This is obvious if $x\in\Int(X)$, and if $x\in\del X$ then it follows from the fact that $\del X$ is \emph{discrete} in $X$.  In particular, $\Int(X)\cup\{x\}$ is again a $k$-analytic curve, so it makes sense to speak of its irreducible components in Proposition~\ref{prop:subharmonic.properties}(\ref{shp.-infty}).  Irreducible components can only meet at rig-points (their intersections are Zariski-closed), so a non-rig-point $x$ is contained in a unique irreducible component of $\Int(X)\cup\{x\}$; this is implicit in Proposition~\ref{prop:subharmonic.properties}(\ref{shp.-infty}).
\end{artsub}

\begin{artsub} \label{unique irreducible implies neighborhood}
  Proposition~\ref{prop:subharmonic.properties}(\ref{shp.-infty}) implies that if $u$ is subharmonic and $u(x)=-\infty$ for a non-rig-point $x\in X$, then $u\equiv-\infty$ in a neighborhood of $x$.
\end{artsub}

\begin{art} \label{three steps of reduction}
  In order to extend Thuillier's results, we will pass from a $k$-analytic curve to a
smooth, strictly $k$-analytic curve in four steps:
\begin{enumerate}
\item Extend scalars to a field as in~\artref{art:shilov.section} to make the curve strictly analytic.
\item If $\chr(k)=p$, then extend scalars to the completion of the perfect closure $k^{p^{-\infty}}$.
\item Replace the analytic curve by the underlying reduced curve.
\item Pass to the interior of the normalization, which is smooth (see~\artref{rig-smooth}).
\end{enumerate}
We begin with some lemmas about this process.
\end{art}

\begin{lem}\label{lem:reduce.to.perfect}
  Suppose that $\chr(k)=p > 0$.  Let $k'$ be the completion of $k^{p^{-\infty}}$.  Then $k'$ is perfect, and  if $X$ is a $k$-analytic space then $\pi_{k'/k}\colon X_{k'}\to X$ is a homeomorphism such that $\pi(\del X_{k'}) = \del X$.
\end{lem}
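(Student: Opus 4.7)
I would prove this lemma in three stages: perfectness of $k'$, the homeomorphism assertion, and the boundary equality.

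\emph{Perfectness.} The field $k^{p^{-\infty}}$ is perfect by construction. To propagate perfectness to the completion, I would use the characteristic-$p$ identity $(b - b')^p = b^p - (b')^p$, which gives $|b - b'| = |b^p - (b')^p|^{1/p}$. Given $a \in k'$ with Cauchy approximants $a_n = b_n^p \in k^{p^{-\infty}}$ (possible since $k^{p^{-\infty}}$ is perfect), the sequence $\{b_n\}$ is then Cauchy in $k'$, and its limit $b$ satisfies $b^p = a$ by continuity of Frobenius.

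\emph{Homeomorphism.} By \artref{extension of scalars} the map $\pi_{k'/k}$ is already continuous, closed, and surjective, so it remains to establish injectivity. The question is local on $X$, so I would reduce to the affinoid case $X = \sM(\sA)$. The fiber over $x \in X$ is $\sM(B)$ with $B \coloneqq \sH(x) \hat\otimes_k k'$, and I claim it is a single point. Let $\rho$ be a bounded multiplicative seminorm on $B$ with $\rho(1)=1$. Its restriction $\rho|_{\sH(x)}$ has kernel an ideal of the field $\sH(x)$, hence zero; combining $\rho \leq |\cdot|_{\sH(x)}$ with multiplicativity $\rho(a)\rho(a^{-1}) = 1$ pins down $\rho|_{\sH(x)} = |\cdot|_{\sH(x)}$, and the same argument gives $\rho|_{k'} = |\cdot|_{k'}$. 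To determine $\rho$ elsewhere, I would exploit the radicality of $k^{p^{-\infty}}/k$: for $c = \sum_i a_i \otimes c_i$ in the dense subalgebra $\sH(x) \otimes_k k^{p^{-\infty}} \subset B$, choose $N$ so that $c_i^{p^N} \in k$ for all $i$. Using $(\sum x_i)^{p^N} = \sum x_i^{p^N}$ in characteristic $p$ and the $k$-bilinearity of the tensor product, one computes
\[
  c^{p^N} = \sum_i a_i^{p^N} \otimes c_i^{p^N} = \Bigl(\sum_i a_i^{p^N} c_i^{p^N}\Bigr) \otimes 1,
\]
which lies in $\sH(x) \otimes 1$, so $\rho(c) = \bigl|\sum_i a_i^{p^N} c_i^{p^N}\bigr|_{\sH(x)}^{1/p^N}$ is uniquely determined. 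Since $\sH(x) \otimes_k k^{p^{-\infty}}$ is dense in $B$ and $\rho$ is continuous, $\rho$ is uniquely determined on all of $B$. Hence $\pi_{k'/k}$ is bijective, and a continuous closed bijection is a homeomorphism.

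\emph{Boundary equality and main obstacle.} The identity $\pi_{k'/k}(\del X_{k'}) = \del X$ is immediate from Lemma \ref{lem:base.change.boundaries}, which holds for any analytic extension. The hard point in the entire argument is the uniqueness of $\rho$ on $B$; the radicality of $k^{p^{-\infty}}/k$ together with the characteristic-$p$ Frobenius identity is exactly what collapses arbitrary tensors to elements of $\sH(x) \otimes 1$ after a suitable $p^N$-th power, and this collapse is the essential non-trivial input. Everything else is formal.
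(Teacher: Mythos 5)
Your proof is correct, but it is genuinely more self-contained than the paper's, which disposes of the lemma in two lines: perfectness is left as an exercise for the reader, the homeomorphism assertion is quoted from Ducros \cite[Remarque~0.5]{ducros09:excellent}, and the boundary equality is (as in your version) immediate from Lemma~\ref{lem:base.change.boundaries}. Your perfectness argument via $|b-b'|=|b^p-(b')^p|^{1/p}$ is exactly the intended exercise. For the homeomorphism, you essentially reprove the content of Ducros's remark: since $\pi_{k'/k}$ is already continuous, closed and surjective by \artref{extension of scalars}, everything rests on the fibers $\sM\bigl(\sH(x)\hat\otimes_k k'\bigr)$ being singletons, and your Frobenius collapse $c^{p^N}\in\sH(x)\otimes 1$ on the dense subalgebra $\sH(x)\otimes_k k^{p^{-\infty}}$ is the right mechanism --- it is the analytic incarnation of the fact that a purely inseparable (radicial) extension is universally injective. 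One small point you pass over quickly: the identity $\rho|_{\sH(x)}=|\cdot|_{\sH(x)}$ needs $\rho(a)\leq|a|$ first, which follows from boundedness of $\rho$ plus multiplicativity via the usual $n$-th power trick ($\rho(a)=\rho(a^n)^{1/n}\leq C^{1/n}|a|\to|a|$); this is standard and does not affect correctness. The trade-off is the usual one: the paper's citation is shorter and leans on a standard reference, while your argument is elementary, makes the role of radicality explicit, and would let a reader verify the lemma without consulting \cite{ducros09:excellent}.
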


\begin{proof}
  We leave it as an exercise for the reader to verify that $k'$ is perfect.  The fact that $\pi_{k'/k}$ is a homeomorphism is~\cite[Remarque~0.5]{ducros09:excellent}, and the assertion about boundaries is Lemma~\ref{lem:base.change.boundaries}.
\end{proof}

We make the following observation to mirror Lemma~\ref{lem:reduce.to.perfect}.  Its proof is immediate.

\begin{lem}\label{lem:reduce.to.reduced}
  Let $X$ be a strictly $k$-analytic space and let $\iota\colon X_{\red}\inject X$ be the inclusion of the underlying reduced space.  Then $\iota$ is a homeomorphism of underlying topological spaces, and $\iota(\del X_{\red}) = \del X$.
\end{lem}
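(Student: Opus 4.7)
My plan is to prove the two assertions (homeomorphism, and equality of boundaries) separately, each by short arguments that reduce to the affinoid case or to properties of the relative interior already listed in \artref{art:relative.interior}.

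For the homeomorphism statement, the key observation is that every bounded multiplicative seminorm on a $k$-affinoid algebra $\sA$ kills nilpotent elements: if $a^n=0$ then $p(a)^n=p(a^n)=0$, so $p(a)=0$. Hence on an affinoid chart $X=\sM(\sA)$ the quotient map $\sA\to\sA/\mathrm{nil}(\sA)$ induces a continuous bijection $\sM(\sA/\mathrm{nil}(\sA))\to\sM(\sA)$, and since both sides are compact Hausdorff it is a homeomorphism. The topology on $X$ is generated by the functions $|f(\cdot)|$ for $f\in\sA$, and each such function factors through the reduction, so the resulting topological identification glues across affinoid charts. This shows $\iota$ is a homeomorphism of underlying topological spaces.

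For the boundary equality, I would exploit that the closed immersion $\iota\colon X_{\red}\hookrightarrow X$ is in particular a finite morphism, so by \artref{relative boundary}\ref{finite and boundary'} we have $\del(X_{\red}/X)=\emptyset$. Applying the transitivity inclusion \eqref{eq:del.morphism} to $X_{\red}\overset{\iota}\to X\to\sM(k)$ gives
\[ \del X_{\red}\subset\del(X_{\red}/X)\cup\iota^{-1}(\del X)=\iota^{-1}(\del X), \]
and taking images under the homeomorphism $\iota$ yields $\iota(\del X_{\red})\subset\del X$. For the reverse inclusion, I would invoke the fact that the structural morphism $X\to\sM(k)$ is locally separated (this holds automatically for $k$-analytic spaces in the sense of~\cite{berkovic93:etale_cohomology}), so the containment in~\eqref{eq:del.composition} is an equality by~\cite[Corollary~5.7]{temkin04:local_properties_II}. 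This gives $\iota^{-1}(\del X)\subset\del X_{\red}$, and applying $\iota$ again yields $\del X\subset\iota(\del X_{\red})$.

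The only nontrivial input is the local separatedness of the structural morphism, which is why I would cite Temkin's result explicitly. If one prefers to avoid it, one could instead argue directly from the local characterization \artref{relative boundary}\ref{local on source'} together with the fact that for an affinoid $X$ the boundary $\del X$ coincides with the Shilov boundary (as in \artref{art:curve.facts}) and that the supremum seminorm on $\sA$ descends to $\sA/\mathrm{nil}(\sA)$, so the Shilov boundaries correspond under $\iota$. Either way, the argument is short, which matches the authors' claim that the proof is immediate.
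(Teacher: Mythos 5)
The paper offers no argument for this lemma at all---it is dismissed with ``its proof is immediate''---so your write-up is a fleshing-out rather than a divergence from a given proof. Your main line of reasoning is correct. The homeomorphism claim via ``multiplicative seminorms kill nilpotents'' on affinoid charts, plus compactness, is exactly right. For the boundaries, the inclusion $\iota(\del X_{\red})\subset\del X$ via \artref{relative boundary}\ref{finite and boundary'} and \eqref{eq:del.morphism} is clean, and the reverse inclusion via the equality case of Temkin's transitivity (local separatedness of $X\to\sM(k)$ being automatic for analytic spaces, as noted e.g.\ by Ducros) is a legitimate route. What the authors presumably have in mind as ``immediate'' is slightly more direct: all the data entering the definition of $\Int(X/Y)$---the completed residue fields $\sH(x)$, the canonical reductions $\td\sA=\sA^\circ/\sA^{\circ\circ}$ taken with respect to the spectral seminorm, and Temkin's (graded) reduction of germs---are literally unchanged by killing nilpotents, so the relative interiors correspond tautologically. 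Your route buys a proof that only uses the properties already catalogued in \artref{art:relative.interior} at the cost of one extra citation; the ``tautological'' route avoids Temkin's Corollary~5.7 but requires unwinding the definition of the boundary for non-good spaces.

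One genuine error in the aside: your proposed alternative for the reverse inclusion asserts that for an affinoid $X$ the boundary $\del X$ coincides with the Shilov boundary, citing \artref{art:curve.facts}. That equality holds only for affinoid \emph{curves}; in higher dimension the Shilov boundary is finite while $\del X$ is typically not (for $\bB^2$ the Shilov boundary is the single Gauss point, whereas $\del\bB^2$ is infinite), and the paper's Lemma~\ref{Shilov boundary vs usual boundary} gives only the one inclusion. Since the lemma under discussion concerns arbitrary strictly $k$-analytic spaces, that fallback argument does not work; fortunately your main argument does not rely on it.
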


We refer the reader to~\cite[before Proposition~3.1.8]{berkovic90:analytic_geometry} for the construction of the normalization.

\begin{lem}\label{lem:reduce.to.smooth}
  Let $X$ be a reduced, strictly $k$-analytic curve and let $\nu\colon\td X\to X$ be its normalization.  Then the following hold:
  \begin{enumerate}
  \item $\td X$ is rig-smooth if $k$ is perfect.
  \item $\nu$ is an isomorphism outside of a discrete set of rig-points of $X$.
  \item $\nu$ induces a bijection $\del\td X\isom\del X$.
  \item $\nu$ is a topological quotient.
  \item Let $x\in X$, let $U$ be an open neighborhood of $x$, and consider the map $\td U = \nu\inv(U)\to U$.  If $U$ is a sufficiently small connected open neighborhood of $x$, then:
    \begin{enumerate}
    \item $\td U\to U$ is injective on connected components of $\td U$.
    \item $\td U\to U$ restricts to an isomorphism $\td U\setminus\nu\inv(x)\isom U\setminus\{x\}$.
    \item Every connected component of $\td U$ intersects $\nu\inv(x)$.
    \end{enumerate}
  \item A function $h\colon X\to\R$ is $\R$-PL if and only if $h\circ\nu$ is $\R$-PL.
  \end{enumerate}
\end{lem}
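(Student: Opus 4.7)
The proof proceeds by reducing each of the six claims to standard facts about normalizations of reduced excellent curves, combined with the relative-boundary formalism of~\artref{relative boundary}. For (1) and (2), I would invoke excellence of $k$-affinoid algebras, so that normalization commutes with completion of local rings. The non-normal locus is Zariski-closed of codimension $\geq 1$ in the reduced curve~$X$, hence a discrete set of rig-points, and $\nu$ is an isomorphism over its complement; this proves~(2). Rig-smoothness of $\td X$ over a perfect $k$ then reduces to the statement that the normalization of a reduced excellent $1$-dimensional local ring over a perfect field is regular with a separable residue field extension, giving~(1).

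For (3), finiteness of $\nu$ gives $\del(\td X/X)=\emptyset$ by~\artref{relative boundary}\ref{finite and boundary'}. Since $X\to\sM(k)$ is separated (hence locally separated), the equality case of~\eqref{eq:del.composition} applied to $\td X\overset{\nu}\to X\to\sM(k)$ yields $\del\td X = \nu\inv(\del X)$ as sets. For the bijection, I would use that $\del X$ consists of non-rig-points: on an affinoid chart~\artref{art:curve.facts} identifies $\del X$ with the Shilov boundary, and the Shilov boundary of an affinoid curve is the preimage of the generic points of the canonical reduction, hence of type~$2$ or~$3$. Consequently $\del X$ is disjoint from the set $S$ of non-normal rig-points from~(2), and $\nu$ restricts to a bijection $\nu\inv(\del X)\isom\del X$. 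For (4), $\nu$ is finite and surjective, hence proper and surjective between Hausdorff spaces, hence a topological quotient.

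For (5), use (2) to choose a connected open neighborhood $U$ of $x$ meeting the non-normal locus only in $\{x\}$; then $\td U\setminus\nu\inv(x)\to U\setminus\{x\}$ is an isomorphism, proving~(b). Since $\nu\inv(x)$ is finite and $\td X$ is locally connected, shrinking $U$ further ensures that each connected component of $\td U$ contains exactly one point of $\nu\inv(x)$, giving (a) and (c). For (6), the ``if'' direction is immediate from $\log|f|\circ\nu = \log|f\circ\nu|$. Conversely, $h\circ\nu$ being $\R$-PL forces $h$ to be $\R$-PL on $X\setminus S$ via the isomorphism $\nu|_{\td X\setminus\nu\inv(S)}$, and near each $s\in S$ one assembles the $\R$-PL presentations on the finitely many branches of $\td U$ (from~(5)) into a single $\R$-PL presentation of $h$ on a neighborhood of~$s$, using continuity of $h$ (inherited from $h\circ\nu$ via the quotient property~(4)).

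The main obstacle I anticipate is rigorously verifying in~(3) that rig-points cannot lie in $\del X$ for a curve over an arbitrary non-Archimedean field, especially in the trivially valued case where the canonical reduction of an affinoid algebra may have unexpectedly low dimension. A secondary technicality is the $\R$-PL assembly step in~(6) at non-normal rig-points, which requires that the PL presentations on the different branches of $\td U$ agree on a neighborhood of the single image point $s\in X$.
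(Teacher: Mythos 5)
Your treatment of (1)--(4) matches the paper's in substance: the paper cites Ducros for (4) where you give the elementary closed-plus-surjective argument, and it handles (3) exactly as you do, via $\del\td X=\nu\inv(\del X)$ from the boundary formalism together with the fact that boundary points of a curve are never rig-points (your worry about the trivially valued case is not an issue, since rig-points always lie in the relative interior). The genuine problems are in (5)(c) and in the converse direction of (6). For (5)(c), "shrinking $U$ further" only ensures that $\td U=\nu\inv(U)$ is contained in a disjoint union of prescribed connected neighborhoods $U'_i$ of the points $x_i\in\nu\inv(x)$, hence that each connected component of $\td U$ meets \emph{at most} one $x_i$; it does not ensure that each component meets \emph{some} $x_i$, which is exactly what (c) asserts. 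Finiteness of the fiber and local connectedness of $\td X$ do not imply this: a priori $\nu\inv(U)\cap U'_i$ could have components avoiding $x_i$ no matter how small $U$ is. The paper closes this by proving that $\nu\inv(U)\cap U'_i$ is path-connected, lifting a path in $U\setminus\{x\}$ through the isomorphism of (b) and extending it continuously to $x_i$; alternatively, a component $C$ of $\td U$ disjoint from $\nu\inv(x)$ would have image $\nu(C)$ both open in $U$ (via the isomorphism of (b)) and closed in $U$ (properness of $\nu$) and missing $x$, contradicting connectedness of $U$. Either way, connectedness of $U$ must enter the argument, and yours never uses it.

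For (6), you correctly identify the assembly of PL presentations at a non-normal rig-point $s$ as the difficulty, but you do not resolve it, and continuity of $h$ alone will not: the local presentations $\sum_j\lambda_{ij}\log|f_{ij}|$ on the various branches involve invertible functions $f_{ij}$ defined only on $\td U$, which do not descend to $U$, and there is no reason for the presentations on different branches to "agree". The missing idea, which is the entire content of the paper's argument here, is that each preimage of $s$ is a rig-point and an invertible analytic function has \emph{constant} absolute value in a neighborhood of a rig-point; hence $h\circ\nu$ is constant near each preimage of $s$, so by the quotient property $h$ is constant, and therefore trivially $\R$-PL, near $s$. Without this observation the converse direction of (6) does not go through as written.
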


\begin{proof}
  The normalization $\td X$ is locally the Berkovich spectrum of a Dedekind domain, so it is rig-smooth  when $k$ is perfect because it is regular; this proves~(1).  Normalization is an isomorphism away from codimension~$1$ as in algebraic geometry, which yields~(2).  Since $\nu$ is finite, we have $\del\td X = \nu\inv(\del X)$ by the properties \ref{finite and boundary'} and \ref{transitivity of boundaries'} of~\artref{relative boundary}, so~(3) follows from~(2) because the boundaries do not contain any rig-points.  Assertion~(4) was noted in~\cite[(3.1.3)]{ducros14:structur_des_courbes_analytiq}.

  Now we prove~(5).  Since normalization is a local construction, we may shrink $X$ to assume that it is normal away from $x$.  Suppose that $\nu\inv(x) = \{x_1,\ldots,x_n\}$, and choose disjoint, connected open neighborhoods $U'_1,\ldots,U'_n$ of $x_1,\ldots,x_n$, respectively.  Let $U' = \bigcup_{i=1}^n\nu(U'_i)$.  Since $\nu$ induces an isomorphism $\td X\setminus\{x_1,\ldots,x_n\}\isom X\setminus\{x\}$, it is clear that $\nu\inv(U') = \Djunion_{i=1}^nU'_i$. Since $\nu$ is a topological quotient, we have that $U'$ is open, and $\nu$ is injective on each $U'_i$ by construction.

  Suppose now that $U\subset U'$ is a connected open neighborhood of $x$. By \cite[Theorem 3.2.1]{berkovic90:analytic_geometry}, $U$ is path-connected.  Clearly $U$ satisfies~(a) and~(b).  Let $U_i = \nu\inv(U)\cap U'_i$, so that $x_i\in U_i$ and $\td U = \nu\inv(U) = \Djunion_{i=1}^nU_i$.  In order to prove~(c), it is enough to show that $U_i$ is path-connected.  Let $y\in U_i\setminus\{x_i\}$, and let $\gamma\colon[0,1]\to U$ be a path from $\pi(y)$ to $x$.  We may assume that $\gamma\inv(x) = \{1\}$, so that $\gamma$ restricts to a path $[0,1)\to U\setminus\{x\}$.  Let $\td\gamma\colon[0,1)\to\td U\setminus\nu\inv(x)$ be the composition of $\gamma|_{[0,1)}$ with the inverse of the isomorphism $\td U\setminus\nu\inv(x)\isom U\setminus\{x\}$.  Then $\td\gamma(0) = y$, so the image of $\td\gamma$ is contained in $U_i$.  Extend $\td\gamma$ to a function $[0,1]\to U_i$ by setting $\td\gamma(1) = x_i$.  One checks continuity of $\td\gamma$ at $1$ using the fact that an open neighborhood of $x$ is the same as a collection of open neighborhoods of $x_1,\ldots,x_n$, as in the previous paragraph.  This shows that $U_i$ is path-connected.

  In the situation of~(6), if $h$ is $\R$-PL then $h\circ\nu$ is $\R$-PL by~\cite[Lemma~5.4(1)]{gubler_rabinoff_jell:harmonic_trop}.  Suppose then that $h\circ\nu$ is $\R$-PL.  Piecewise linearity of $h$ is local on $X$, and $\nu$ is an isomorphism away from a finite set of (non-normal) rig-points, so it is enough to check that $h$ is $\R$-PL in a neighborhood of a single rig-point~$x$.  Let $x'\in\td X$ be a preimage of $x$.  There is a covering of $\td X$ by strictly affinoid domains on which $h\circ\nu$ has the form $\sum_{i=1}^n\lambda_i\log|f_i|$, where $\lambda_i\in\R$ and $f_i\in\sO^\times$.  Since $x'$ is a rig-point, it is contained in the \emph{interior} of one of these affinoid domains, so $h\circ\nu = \sum_{i=1}^n\lambda_i\log|f_i|$ in a neighborhood of $x'$.  But if $f$ is invertible in a neighborhood of a rig-point $x'$, then $|f|$ is constant on a neighborhood of $x'$ (this elementary fact can be seen as a very special case of \cite[Theorem 3.4]{ducros12:squelettes_modeles}).  It follows that $h\circ\nu$ is constant in a neighborhood of $x'$.  Suppose that $\nu\inv(x) = \{x_1,\ldots,x_n\}$.  For each $i$ choose an open neighborhood $U_i$ of $x_i$ on which $h\circ\nu$ is constant.  As above, $U = \bigcup_{i=1}^n\nu(U_i)$ is an open neighborhood of $x$, and clearly $h$ is constant, hence $\R$-PL, on~$U$.
\end{proof}

\begin{proof}[Proof of Proposition~\ref{prop:subharmonic.properties}(\ref{shp.sheafiness})]
  This follows from the corresponding fact for subharmonic functions on smooth, strictly analytic curves~\cite[Corollaire~3.1.13]{thuillier05:thesis}, except that one has to be careful about where a function takes the value $-\infty$.  Suppose then that $\{U_i\}$ is an open cover of $X$ and that $u\colon X\to\R\cup\{-\infty\}$ is subharmonic on each $U_i$.  Note that $u$ is upper semicontinuous.  Let $k'/k$ be an analytic extension field with $k'$ non-trivially valued, let $Y$ be a smooth, connected, strictly $k'$-analytic curve, and let $f\colon Y\to X_{k'}$ be a morphism.  Assume that $u\circ\pi_{k'/k}\circ f\not\equiv-\infty$, so we must show that $u\circ \pi_{k'/k}\circ f$ is subharmonic in Thuillier's sense.  Let $U_i' = \pi_{k'/k}\inv(U_i) = (U_i)_{k'}$ and let $Y_i = f\inv(U_i')$.  Since $u$ is subharmonic on $U_i$, the composition $u\circ\pi_{k'/k}\circ f|_{Y_i}$ is either subharmonic in Thuillier's sense or is identically $-\infty$ on each connected component of $Y_i$.

  Let $W_1$ be the set of all points $y\in Y$ that admit a neighborhood on which $u\circ\pi_{k'/k}\circ f$ is subharmonic in Thuillier's sense, and let $W_2$ be the set of all points $y\in Y$ that admit a neighborhood on which $u\circ\pi_{k'/k}\circ f\equiv-\infty$.  Clearly $W_1$ and $W_2$ are open, and they cover $Y$ since $Y$ is covered by the connected components of the $Y_i$.  They are disjoint because if $u\circ\pi_{k'/k}\circ f$ is subharmonic in Thuillier's sense on a connected neighborhood of $y\in Y$, then it cannot be identically $-\infty$ on any neighborhood of $y$~\cite[Lemme~3.1.9]{thuillier05:thesis}.  Since $Y$ is connected and $u\circ\pi_{k'/k}\circ f\not\equiv-\infty$, we have $Y = W_1$.  The result now follows from~\cite[Corollaire~3.1.13]{thuillier05:thesis}.
\end{proof}

\begin{proof}[Proof of Proposition~\ref{prop:subharmonic.properties}(\ref{shp.analytic})]
  Let $f$ be an analytic function on $X$. The function $|f|$ is continuous and hence $u \coloneqq \log|f|$ is upper semicontinuous as  a continuous function with values in $\R \cup \{-\infty\}$. According to our definition of subharmonic functions, we have to show that after base change of $u$ with respect  to an analytic field extension $k'/k$  with $k'$ non-trivially valued and then pull-back to a smooth strictly $k'$-analytic curve, the resulting function is subharmonic. This boils down to showing that $u$ is subharmonic in the special case of a smooth strictly $k$-analytic curve $X$. For this, we may assume $X$ connected. If $f=0$, then $u$ is identically $-\infty$ and hence subharmonic in the sense of Definition \ref{def:subharmonicity}. If $f$ is not identically zero, then the Poincar\'e--Lelong formula \cite[Proposition 3.3.15]{thuillier05:thesis} and the characterization in terms of positive currents \cite[Proposition 3.4.4]{thuillier05:thesis} show that $u$ is subharmonic in the sense of Thuillier.
\end{proof}

We will use the following lemma in the proofs of Proposition~\ref{prop:subharmonic.properties}(\ref{shp.boundaries},\ref{shp.-infty}).  If $X$ is a $k$-analytic space and $k'/k$ is an analytic extension field, then  $\pi_{k'/k}\inv(x) = \sM(\sH(x)\hat\tensor_kk')$ has a finite Shilov boundary for any Abhyankar point $x\in X$ by~\cite[3.2.14]{ducros14:structur_des_courbes_analytiq}; we denote this boundary by $x_{[k']}$.

\begin{lem}\label{lem:fiber.boundary}
  Let $X$ be a $k$-analytic curve and let $k'/k$ be an analytic field extension.  Then for all $x\in\del X$ we have $(\del X_{k'})\cap\pi_{k'/k}\inv(x) = x_{[k']}$.
\end{lem}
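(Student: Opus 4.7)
I would reduce to the affinoid case by choosing a small affinoid neighborhood of $x$, and then invoke the standard description of the Shilov boundary of an affinoid base change. Choose an affinoid neighborhood $U$ of $x$ in $X$, which exists by~\artref{art:curve.facts}(1).  By~\ref{relation to topological boundary'}, the relative interior $\Int(U/X)$ is the topological interior of $U$ in $X$, so $x\in\Int(U/X)$.  Applying the equality in~\ref{transitivity of boundaries'} to $U\inject X\to\sM(k)$ gives $\del U=\del(U/X)\cup(U\cap\del X)$, whence $x\in U\cap\del X\subset\del U$, and the analogous identity holds after extension of scalars: $\del U_{k'}=\del(U_{k'}/X_{k'})\cup(U_{k'}\cap\del X_{k'})$.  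Since $\pi_{k'/k}$ is continuous, $\pi_{k'/k}\inv(\Int(U/X))$ is open and contained in $U_{k'}$, hence in $\Int(U_{k'}/X_{k'})$; in particular $\pi_{k'/k}\inv(x)\cap\del(U_{k'}/X_{k'})=\emptyset$.  We conclude
\[
  \pi_{k'/k}\inv(x)\cap\del X_{k'} \;=\; \pi_{k'/k}\inv(x)\cap\del U_{k'},
\]
which reduces the problem to a computation of a Shilov boundary after base change.

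Now $U$ and $U_{k'}$ are both affinoid curves, so by~\artref{art:curve.facts}(3) their boundaries equal their Shilov boundaries; write $\Gamma(U)$ and $\Gamma(U_{k'})$ for these.  Thus $x\in\Gamma(U)$.  Shilov boundary points of an affinoid curve are Abhyankar (in the classical case they are of type~$2$ or~$3$, and more generally they satisfy Abhyankar's equality for dimension reasons), so the fiber Shilov boundaries $y_{[k']}$ are defined for all $y\in\Gamma(U)$.  The key classical identity is
\[
  \Gamma(U_{k'}) \;=\; \bigcup_{y\in\Gamma(U)} y_{[k']}.
\]
Intersecting with $\pi_{k'/k}\inv(x)$ picks out the single summand lying over $x\in\Gamma(U)$, giving $\del X_{k'}\cap\pi_{k'/k}\inv(x)=x_{[k']}$ as desired.

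The main obstacle is justifying this base-change formula for Shilov boundaries.  The inclusion $\supset$ is elementary: if $f\in\sA$ achieves its sup norm at $y\in\Gamma(U)$, then for any $y'\in y_{[k']}$ one has $|f(y')|=|f(y)|=|f\hat\otimes 1|_{\sup,U_{k'}}$, so $y'\in\Gamma(U_{k'})$.  The inclusion $\subset$ is the substantive point: it rests on the upper semicontinuity of $y\mapsto|g|_{\sup,\pi_{k'/k}\inv(y)}$ for $g\in\sA\hat\otimes_k k'$ together with the compactness of $U$ (which force the sup over $U_{k'}$ to be attained over a point of $\Gamma(U)$), and is a classical statement for which I would cite~\cite[\S 3.2]{ducros14:structur_des_courbes_analytiq} or the corresponding material in~\cite{berkovic90:analytic_geometry}.
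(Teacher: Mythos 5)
Your reduction to the affinoid case is sound, and in fact more careful than the paper's own one-line ``replace $X$ by an affinoid neighborhood'': checking that $\pi_{k'/k}\inv(x)$ misses $\del(U_{k'}/X_{k'})$ so that $\pi_{k'/k}\inv(x)\cap\del X_{k'}=\pi_{k'/k}\inv(x)\cap\del U_{k'}$ is a worthwhile detail. Identifying $\del U$ and $\del U_{k'}$ with Shilov boundaries via \artref{art:curve.facts}(3) also matches the paper. The problem is that the entire content of the lemma sits in the base-change identity $\Gamma(U_{k'})=\bigcup_{y\in\Gamma(U)}y_{[k']}$ that you then defer to, and both of your arguments for it have gaps. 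For $\supset$: if $f\in\sA$, then $|(f\hat\otimes 1)(y')|=|f(y)|$ for \emph{every} $y'\in\pi_{k'/k}\inv(y)$, not just for $y'\in y_{[k']}$, so your observation cannot single out the fiber Shilov points; moreover, attaining the supremum of functions pulled back from $\sA$ does not place a point in $\Gamma(U_{k'})$, which is defined by a minimality condition against all $g\in\sA\hat\otimes_k k'$ (in $\bB^1$, the function $T$ attains its sup on the whole unit circle, yet the Shilov boundary is only the Gauss point). This inclusion is genuinely nontrivial; the paper proves it by taking Ducros's Abhyankar presentation $g\colon X\to\bB^1(0,s)$ with $x\in g\inv(\eta_s)$, noting $(g')\inv(\eta_{s,k'})\subset\del X_{k'}$ via the equality case of~\eqref{eq:del.morphism}, and quoting~\cite[3.2.13]{ducros14:structur_des_courbes_analytiq} to identify $x_{[k']}$ with $(g')\inv(\eta_{s,k'})\cap\pi_{k'/k}\inv(x)$.

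For $\subset$: upper semicontinuity plus compactness only gives that $y\mapsto|g|_{\sup,\pi_{k'/k}\inv(y)}$ attains its maximum \emph{somewhere} on $U$; to force it onto $\Gamma(U)$ you would need a maximum principle for this fiberwise supremum, which is not elementary (it is close to the subject of this paper). The reference used elsewhere in the paper, \cite[Proposition~2.21]{gubler_rabinoff_werner:tropical_skeletons}, gives only surjectivity of $\Gamma(U_{k'})\to\Gamma(U)$, not the fiberwise description, so appealing to the ``classical identity'' comes uncomfortably close to assuming the conclusion. The paper instead argues pointwise and sidesteps all of this: given $x'\in\del X_{k'}\cap\pi_{k'/k}\inv(x)$ with $x'\notin x_{[k']}$, Berkovich's \cite[Corollary~2.4.5]{berkovic90:analytic_geometry} produces $f\in\sA\hat\otimes_k k'$ attaining its maximum at $x'$ with $\bigl\{|f|>|f(x')|-\epsilon\bigr\}$ disjoint from $x_{[k']}$; restricting $f$ to the fiber $\pi_{k'/k}\inv(x)$ then contradicts $x_{[k']}$ being the Shilov boundary of that fiber. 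Note also that the difficulty is distributed oppositely to what you expect: in the paper the ``forward'' inclusion is the one requiring real input, and the ``reverse'' one is the quick Shilov argument. Both of your sketched inclusions need to be replaced by arguments of this kind.
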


\begin{proof}
  Replacing $X$ by an affinoid neighborhood of $x$, we may assume that $X = \sM(\sA)$ is affinoid.  By a result of Ducros~\cite[Lemme 3.1]{ducros12:squelettes_modeles}, there exists a morphism $g\colon X\to\bB^1(0,s)$ for some $s\in\R_{>0}$ such that $x\in g\inv(\eta_s)$, where $\eta_s$ is the maximal point of $\bB^1(0,s)$.  This is discussed in detail in Section~\ref{section: maximum principle for classically functions}.  Consider the commutative square
  \[
    \begin{tikzcd}
      X_{k'} \rar["g'"] \dar["\pi_{k'/k}"'] & \bB^1_{k'}(0,s) \dar["\pi_{k'/k}"] \\
      X \rar["g"'] & \bB^1(0,s)\rlap.
    \end{tikzcd}
  \]
  Since $\del\bB^1_{k'}(0,s)$ contains (only) its maximal point $\eta_{s,k'}$, we have $(g')\inv(\eta_{s,k'})\subset\del X_{k'}$ by~\eqref{eq:del.morphism} (which is an equality because $\bB^1_{k'}(0,s)$ is separated).  The morphism~$g$ is an \textit{Abhyankar presentation} of $x$ in the sense of~\cite[3.2.12.2]{ducros14:structur_des_courbes_analytiq}, so by Proposition~3.2.13 of \textit{ibid}, we have $x_{[k']} = (g')\inv(\eta_{s,k'})\cap\pi_{k'/k}\inv(x)\subset(\del X_{k'})\cap\pi_{k'/k}\inv(x)$.

  For the reverse inclusion, let $x'\in(\del X_{k'})\cap\pi_{k'/k}\inv(x)$, and suppose that $x'\notin x_{[k']}$.  Since $\del X_{k'}$ is the Shilov boundary of $X_{k'} = \sM(\sA\hat\tensor_kk')$, by~\cite[Corollary~2.4.5]{berkovic90:analytic_geometry} there exist $f\in\sA\hat\tensor_k k'$ and $\epsilon>0$ such that $|f|$ attains its maximum value at $x'$ and such that
  \[ \bigl\{y\in X_{k'}\bigm||f(y)|>|f(x')|-\epsilon\bigr\}\subset X_{k'}\setminus x_{[k']}. \]
  In particular, the restriction of $f$ to $\pi_{k'/k}\inv(x)$ does not attain its maximum value on $x_{[k']}$, which cannot happen.  Thus $x'\in x_{[k']}$, so $(\del X_{k'})\cap\pi_{k'/k}\inv(x)\subset x_{[k']}$.
\end{proof}

\begin{proof}[Proof of Proposition~\ref{prop:subharmonic.properties}(\ref{shp.boundaries})]
  Suppose that $u$ is subharmonic on $\Int(X)$.  Let $k'/k$ be an analytic extension field  with $k'$ non-trivially valued, let $Y$ be a smooth, connected, strictly $k'$-analytic curve, and let $f\colon Y\to X_{k'}$ be a morphism.  We must prove that $u\circ\pi_{k'/k}\circ f$ is subharmonic in Thuillier's sense or is identically $-\infty$.  Since $\del Y = \emptyset$, it follows from~\artref{relative boundary}(\ref{transitivity of boundaries})
  that $f(Y)\subset\Int(X_{k'})$.  If $\pi_{k'/k}\circ f(Y)\subset\Int(X)$ then $u\circ\pi_{k'/k}\circ f$ is subharmonic in Thuillier's sense or is identically $-\infty$ because $u$ is subharmonic on $\Int(X)$.  Suppose then that $\pi_{k'/k}\circ f(y) = x\in\del X$ for some $y\in Y$, so that $f(y)\in\Int(X_{k'})\cap\pi_{k'/k}\inv(x)$.

  We claim that $\pi_{k'/k}\inv(x)\cap\Int(X_{k'})$ is both open and closed in $\Int(X_{k'})$.   By~\cite[Proposition~5.3.4]{ducros14:structur_des_courbes_analytiq}, if $U$ is a connected component of $\pi_{k'/k}\inv(X\setminus\{x\})$, then the closure of $U$ in $X_{k'}$ is contained in $\pi_{k'/k}\inv(X\setminus\{x\})\cup x_{[k']}$.
  Let $U'$ be the connected component of $X_{k'}\setminus x_{[k']}$ containing $U$. The above shows that $U$ is closed in $U'$.   Since $U$ is open in $X$ (an analytic space is locally connected~\cite[Remark 1.2.4(iii)]{berkovic93:etale_cohomology} and hence its connected components are open~\cite[\href{https://stacks.math.columbia.edu/tag/02KB}{Lemma 04ME}]{stacks-project}), it is open in $U'$, so $U=U'$ since $U'$ is connected.
  Hence $\pi_{k'/k}\inv(X\setminus\{x\})$ is a union of connected components of $X_{k'}\setminus x_{[k']}$, so the same is true of $\pi_{k'/k}\inv(x)\setminus x_{[k']}$.
   Since $X_{k'}\setminus x_{[k']}$ is itself a $k$-analytic space as $x_{[k']}$ is a finite set, its connected components are open, so we conclude that
    $\pi_{k'/k}\inv(x)\setminus x_{[k']}$ is open and closed in $X_{k'}\setminus x_{[k']}$.
    Since $x \in \partial X$, we have  $(\pi_{k'/k}\inv(x)) \cap \partial  X_{k'}= x_{[k']}$ by Lemma~\ref{lem:fiber.boundary}, which shows
    $\Int(X_{k'})\subset X_{k'}\setminus x_{[k']}$ and $\pi_{k'/k}\inv(x)\setminus x_{[k']} = \pi_{k'/k}\inv(x)\cap\Int(X_{k'})$.
    We conclude that $\pi_{k'/k}\inv(x)\setminus x_{[k']}$ is both open and closed in $\Int(X_{k'})$, as claimed.

    Since $Y$ is connected, $f(Y) \subset \Int(X_{k'})$, and $f(y)\in\Int(X_{k'})\cap\pi_{k'/k}\inv(x)$, we deduce from the above claim that $f(Y)\subset\Int(X_{k'})\cap\pi_{k'/k}\inv(x)$.  Hence $\pi_{k'/k}\circ f$ is constant, so $u\circ\pi_{k'/k}\circ f$ is constant.  It follows that $u$ is subharmonic in Thuillier's sense or is identically $-\infty$.
\end{proof}

\begin{proof}[Proof of Proposition~\ref{prop:subharmonic.properties}(\ref{shp.cone},\ref{shp.limits})]
  These follow quickly from the corresponding facts for subharmonic functions on smooth, strictly analytic curves~\cite[Proposition~3.1.8]{thuillier05:thesis}.
\end{proof}

\begin{proof}[Proof of Proposition~\ref{prop:subharmonic.properties}(\ref{shp.maximum})]
  Suppose first that $X$ is strictly $k$-analytic for $k$ non-trivially valued.  If $\chr(k) = p > 0$ then let $k'$ be the completion of $k^{p^{-\infty}}$; otherwise let $k'=k$.  Let $X' = X_{k'}$ and let $\pi = \pi_{k'/k}\colon X'\to X$, so $\pi$ is a homeomorphism preserving boundaries by Lemma~\ref{lem:reduce.to.perfect}.  Let $\iota\colon X'_{\red}\inject X'$ be the inclusion of the underlying reduced space and let $\nu\colon\td X'\to X'_{\red}$ be the normalization.  Note that $\nu$ and $\iota$ are finite, so $\del\td X' = (\iota\circ\nu)\inv(\del X')$, and hence $\del\td X' = (\pi\circ\iota\circ\nu)\inv(\del X)$.  Let $Y = \Int(\td X')$ and let $f = \iota\circ\nu|_Y\colon Y\to X'$.  Then $Y$ is smooth and strictly analytic, so $u\circ\pi\circ f$ is by definition either subharmonic in Thuillier's sense or identically $-\infty$ on every connected component of $Y$.

  Suppose that $u$ attains a local maximum at $x\in\Int(X)$.  If $u(x)=-\infty$ then $u$ must be constant in a neighborhood of $x$ by semicontinuity of $u$.  Assume then that $u(x)>-\infty$.   Both $\pi$ and $\iota$ are homeomorphisms, so it suffices to show that $u\circ\pi\circ\iota\colon X'_{\red}\to\R\cup\{-\infty\}$ is constant in a neighborhood of the unique point $x'\in\Int(X'_{\red})$ mapping to $x$.  Choose a connected open neighborhood $U\subset\Int(X'_{\red})$ of~$x'$ as in Lemma~\ref{lem:reduce.to.smooth}(5), on which $u\circ\pi\circ\iota$ attains a maximum at $x'$.
  By Lemma~\ref{lem:reduce.to.smooth}(3), we have $\nu^{-1}(U)\subset Y=\Int(\td X')$, so $u\circ\pi\circ f$ is again either subharmonic in Thuillier's sense or identically $-\infty$ on each connected component $C$ of $\nu^{-1}(U)$; since $u(x)>-\infty$ and $C\cap\nu\inv(x')\neq\emptyset$, the former is true.  The function $u\circ\pi\circ f$ attains its maximum on every connected component of $\nu\inv(U)$, so by the maximum principle for subharmonic functions on smooth, strictly analytic curves~\cite[Proposition~3.1.11]{thuillier05:thesis}, the function $u\circ\pi\circ f$ is constant on every connected component of $\nu\inv(U)$.  It follows that $u\circ\pi\circ\iota$ is constant on $U$.

  Now we reduce to the strictly analytic case handled above.  We may assume that $X$ is affinoid. As explained in \artref{art:shilov.section}, there is an analytic extension field $k'=k_r$ for some $r \in \R_{>0}^n$ such that $X_{k'}$ is strictly $k'$-analytic.  Let $\sigma\colon X\to X_{k'}$ be the Shilov section of $\pi_{k'/k}$.
  Then $x' = \sigma(x)\in\Int(X_{k'})$ because $\pi_{k'/k}\inv(\Int(X))\subset\Int(X_{k'})$ by~\artref{relative boundary}\eqref{boundary and base change}.  It is immediate from the definition that $u\circ\pi_{k'/k}$ is subharmonic, and it attains a local maximum at $x'$, so $u\circ\pi_{k'/k}$ is constant in an open neighborhood $U'$ of $x'$.  Then $u$ is constant on the open set $U = \sigma\inv(U')$.
\end{proof}

\begin{proof}[Proof of Proposition~\ref{prop:subharmonic.properties}(\ref{shp.scalars})]
  It is immediate from the definition that subharmonicity of $u$ implies subharmonicity of $u\circ\pi_{k'/k}$.  Suppose then that $u\circ\pi_{k'/k}$ is subharmonic.  Let $k''/k$ be another analytic extension field  with $k''$ non-trivially valued, and let $f\colon Y\to X_{k''}$ be a  morphism from a smooth, connected, strictly $k''$-analytic curve.  Assume that $u\circ\pi_{k''/k}\circ f\not\equiv-\infty$, so we need to show that $u\circ\pi_{k''/k}\circ f$ is subharmonic in Thuillier's sense.  Let $K$ be a non-Archimedean field that is simultaneously an analytic extension of $k'$ and $k''$~\artref{art:simultaneous.extensions}, and consider the following commutative diagram:
  \begin{equation}\label{eq:scalars.shp.diagram}
    \begin{tikzcd}
      Y_K \rar["f_K"] \dar & X_K \rar["\pi_{K/k'}"] \dar["\pi_{K/k''}"] & X_{k'} \dar["\pi_{k'/k}"] \\
      Y \rar["f"'] & X_{k''} \rar["\pi_{k''/k}"'] & X \rar["u"'] & \R\cup\{-\infty\}\rlap.
    \end{tikzcd}
  \end{equation}
  Since $u\circ\pi_{k'/k}$ is subharmonic, the composition $u\circ\pi_{K/k}\circ f_K$ is, on each connected component $C$ of $Y_K$, either subharmonic in Thuillier's sense or identically $-\infty$; since $C$ surjects onto $Y$ \cite[1.5.6]{ducros18:families} and $u\circ\pi_{k''/k}\circ f\not\equiv-\infty$, the latter does not happen.  Replacing $X$ by $Y$ and $k'/k$ by $K/k''$, we are reduced to showing that if $X$ is \emph{smooth} and strictly $k$-analytic for $k$ non-trivially valued and $u\circ\pi_{k'/k}$ is subharmonic in Thuillier's sense, then so is $u$.

  We can do this directly using Thuillier's definition.  (The converse is the difficult direction: see~\cite[Corollaire~3.4.5]{thuillier05:thesis}.)  Let $U\subset X$ be a strictly $k$-affinoid domain and let $h\colon U\to\R$ be a harmonic function such that $u\leq h$ on $\del U$.  Then $h\circ\pi_{k'/k}\colon U_{k'} = \pi_{k'/k}\inv(U)\to\R$ is harmonic by~\cite[Proposition~2.3.18]{thuillier05:thesis}.  We have $\pi_{k'/k}(\del U_{k'}) = \del U$ by Lemma~\ref{lem:base.change.boundaries}, so $u\circ\pi_{k'/k}\leq h\circ\pi_{k'/k}$ on $\del U_{k'}$.  Since $u\circ\pi_{k'/k}$ is subharmonic in Thuillier's sense, this implies $u\circ\pi_{k'/k}\leq h\circ\pi_{k'/k}$ on $U_{k'}$, so $u\leq h$ on $U$ since $\pi_{k'/k}\colon U_{k'}\surject U$ is surjective.  Thus $u$ is subharmonic in Thuillier's sense, as claimed.
\end{proof}

\begin{proof}[Proof of Proposition~\ref{prop:subharmonic.properties}(\ref{shp.functoriality})]
  It is immediate that subharmonicity of $u$ implies subharmonicity of $u\circ f$.  Suppose then that $f$ is finite and surjective and that $u\circ f$ is subharmonic.  Let $k'/k$ be an analytic extension field  with $k'$ non-trivially valued and let $g\colon Z\to X_{k'}$ be a morphism from a smooth, connected, strictly $k'$-analytic curve.  Suppose that $u\circ\pi_{k'/k}\circ g\not\equiv-\infty$, so we need to show that $u\circ\pi_{k'/k}\circ g$ is subharmonic in Thuillier's sense.  Let $W = Z\times_{X_{k'}} Y_{k'}$, and consider the following commutative diagram:
  \begin{equation}\label{eq:functoriality.shp.diagram}
    \begin{tikzcd}
      W \rar["g'"] \dar["f''"'] \drar[phantom, "\square"]
      & Y_{k'} \rar["\pi_{k'/k}"] \dar["f'"] & Y \dar["f"] \\
      Z \rar["g"'] & X_{k'} \rar["\pi_{k'/k}"'] & X \rar["u"'] & \R\cup\{-\infty\}\rlap.
    \end{tikzcd}
  \end{equation}
  The composition $(u\circ f)\circ\pi_{k'/k}\circ g'$ is subharmonic because it is pulled back from $u\circ f$ by an extension of scalars and the morphism $g'\colon W\to Y_{k'}$.  Since $f$ is finite and surjective, so are $f'$ and $f''$ by~\artref{art:simultaneous.extensions}, so we may replace $X$ by $Z$ and $Y$ by $W$ to assume $X$ is a \emph{smooth}, connected, strictly $k$-analytic curve  for $k$ non-trivially valued and that $u\not\equiv-\infty$.  We can check subharmonicity of $u$ (in Thuillier's sense) after extending scalars as in the proof of Proposition~\ref{prop:subharmonic.properties}(\ref{shp.scalars}), so we may assume in addition that $k$ is perfect.

  Let $\iota\colon Y_{\red}\inject Y$ be the inclusion of the underlying reduced space, and let $\nu\colon\td Y\to Y_{\red}$ be the normalization.  Since $k$ is perfect, the normal curve $\td Y$ is rig-smooth, and since $\td Y\to X$ is finite and $\del X=\emptyset$ (as $X$ is smooth), the curve $\td Y$ is smooth as well: see Lemmas~\ref{lem:reduce.to.reduced} and~\ref{lem:reduce.to.smooth}.  Both $\iota$ and $\nu$ are finite and surjective, so $f\circ\iota\circ\nu\colon\td Y\to X$ is again finite and surjective.  By Definition~\ref{def:subharmonicity} (with $k=k'$), subharmonicity of $u\circ f$ means that, on each connected component $C$ of $\td Y$, the function $(u\circ f)\circ\iota\circ\nu$ is either subharmonic in Thuillier's sense or is identically $-\infty$.  Since $C\to X$ is surjective (it is finite, hence it is a closed map, and it is an open map by~\cite[Lemma~3.2.4]{berkovic90:analytic_geometry}, so its image is all of $X$ since $X$ is connected), the latter does not happen, so $u\circ f\circ\iota\circ\nu$ is subharmonic in Thuillier's sense.    Thus we replace $Y$ by $\td Y$ and $f$ by $f\circ\iota\circ\nu$ to assume $X$ and $Y$ are both smooth, and that $u\circ f$ is subharmonic in Thuillier's sense.

  At this point, one proves that $u$ is subharmonic in Thuillier's sense directly from Thuillier's definition, as in the proof of Proposition~\ref{prop:subharmonic.properties}(\ref{shp.scalars}), this time using~\cite[Proposition~2.3.19]{thuillier05:thesis}.  The converse is the difficult direction: see~\cite[Proposition~3.1.14]{thuillier05:thesis}.
\end{proof}

We will need the following lemma in the proof of Proposition~\ref{prop:subharmonic.properties}(\ref{shp.-infty}).  (Note that Lemma~\ref{lem:-infty.on.boundary} itself follows from Proposition~\ref{prop:subharmonic.properties}(\ref{shp.-infty}) as we have seen in \artref{unique irreducible}.)

\begin{lem}\label{lem:-infty.on.boundary}
  Let $X$ be a strictly $k$-analytic curve over a non-trivially valued field $k$ and let $u\colon X\to\R\cup\{-\infty\}$ be a subharmonic function.  If $x\in\del X$ and $u(x)=-\infty$, then $u\equiv-\infty$ on a neighborhood of~$x$.
\end{lem}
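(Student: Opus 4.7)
The plan is to apply the three-step reduction from the proof of Proposition~\ref{prop:subharmonic.properties}(\ref{shp.maximum})---base change to the completion of the perfect closure (Lemma~\ref{lem:reduce.to.perfect}), passage to the underlying reduced space (Lemma~\ref{lem:reduce.to.reduced}), and normalization (Lemma~\ref{lem:reduce.to.smooth})---in order to reduce to the case where $X$ is a rig-smooth strictly $k$-analytic curve over a perfect field~$k$, with $x\in\partial X$ and $u(x)=-\infty$.  At each step the boundary is preserved; and since $x$ is not a rig-point, the normalization $\nu\colon\tilde X\to X$ is a local isomorphism at the unique preimage $\tilde x\in\partial\tilde X$ of $x$ by Lemma~\ref{lem:reduce.to.smooth}(2),(3), so the conclusion upstairs implies the conclusion downstairs.

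Since $\partial X$ is discrete in $X$ (see~\artref{unique irreducible}), I then choose a connected open neighborhood $W$ of $x$ with $W\cap\partial X=\{x\}$, so that $W_0\coloneqq W\setminus\{x\}\subset\Int(X)$ is smooth.  To describe $u$ near $x$ I analyze the local branch structure: after shrinking to a strictly $k$-affinoid neighborhood $V\subset W$ of $x$ and possibly extending scalars further (handled as in the initial reduction, using Proposition~\ref{prop:subharmonic.properties}(\ref{shp.scalars})), I may assume $V$ admits a strict semistable formal model with $x$ corresponding to the generic point of a smooth component of the special fiber.  In this situation every connected component of $V\setminus\{x\}$ accumulating at $x$ is analytically either an open disc whose closure in $V$ attaches $x$ as its unique Shilov boundary point, or an open annulus whose closure attaches $x$ as one of two Shilov boundary points.

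For a disc branch $B$, the closure $B\cup\{x\}\subset V$ is an affinoid isomorphic to the closed disc $\bB^1$ with Shilov boundary $\{x\}$; Thuillier's global maximum principle on $\bB^1$~\cite{thuillier05:thesis}, transported to our generalized subharmonic functions via Proposition~\ref{prop:subharmonic.properties}(\ref{shp.scalars}, \ref{shp.functoriality}), yields $\max u\leq u(x)=-\infty$ on $B\cup\{x\}$, hence $u\equiv-\infty$ there.  For an annulus branch $A$, the restriction of $u$ to the skeleton of $A$ is convex in the logarithmic parameter by standard Thuillier theory; combined with the endpoint value $u(x)=-\infty$, convexity forces $u\equiv-\infty$ on the entire open sub-skeleton abutting $x$ (a convex function with value $-\infty$ at one endpoint is identically $-\infty$ on the open interior), and the residue-disc description of Thuillier-subharmonic functions on smooth annuli then promotes this to $u\equiv-\infty$ on a half-open sub-annulus neighborhood of $x$ inside $A$.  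Taking the union of these collar neighborhoods over the finitely many branches at $x$, together with $\{x\}$, produces an open neighborhood of $x$ in $V$ on which $u\equiv-\infty$.

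The main obstacle is the local branch structure at $x$, which rests on strict semistable reduction for rig-smooth curves after finite base change; once it is available, the disc case is settled by the maximum principle and the annulus case by convexity on the skeleton, both routine applications of Thuillier's theory.
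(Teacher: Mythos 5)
Your reduction to a rig-smooth curve over (the completion of) the perfect closure, followed by base change to an algebraically closed field and semistable reduction, reproduces the first half of the paper's proof. The second half diverges: the paper does not treat the disc and annulus branches separately, but shows $u\equiv-\infty$ on every connected component $U$ of $\Int(X)$ accumulating at $x$ by adapting Thuillier's Lemme~3.1.9 --- it constructs harmonic comparison functions $h_N$ with $h_N(x)=-N$ and $h_N\geq u$ on $\del X\setminus\{x\}$, deduces $u\leq h_N$ on $U$ from the maximum principle and compactness of $\bar U$, and lets $N\to\infty$. Your branch-by-branch route is workable in principle, but two of its steps are wrong or unjustified as written.

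First, the closure $B\cup\{x\}$ of an open residue disc $B$ is \emph{not} an affinoid domain isomorphic to $\bB^1$: deleting the Gauss point from $\bB^1$ leaves infinitely many connected components (one per residue disc), whereas $(B\cup\{x\})\setminus\{x\}=B$ is connected, and $B\cup\{x\}$ is not even an analytic domain. So a ``global maximum principle on $\bB^1$'' does not apply to it. The desired conclusion is still reachable, but via the \emph{local} maximum principle (Proposition~\ref{prop:subharmonic.properties}(\ref{shp.maximum})) on the connected open set $B$ together with upper semicontinuity of $u$ at $x$: if $u|_B$ were Thuillier-subharmonic, its maximum over the compact set $B\cup\{x\}$ would be attained in $B$ (since $u(x)=-\infty$ while $u|_B\not\equiv-\infty$), so $u$ would be constant $=C>-\infty$ on $B$, and then upper semicontinuity forces $u(x)\geq C$, a contradiction; the same mechanism is what your annulus step silently needs in order to promote $u\equiv-\infty$ from the skeleton to the whole annulus via $u\leq u\circ\tau$. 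Second, there are \emph{infinitely} many disc branches at $x$ (one for each closed point of the component of the special fibre containing $\red(x)$), not finitely many, and the assertion that their union with $\{x\}$ is a neighbourhood of $x$ is not automatic: it holds because this union equals $\red^{-1}\bigl(\overline{\{\red(x)\}}\bigr)$ and the reduction map is anticontinuous, which you do not invoke. With these repairs your argument goes through and is a legitimate alternative to the paper's $h_N$ construction, at the cost of needing the full residue-disc/annulus description rather than only the fact that each relevant component of $\Int(X)$ has $x$ as a limit point.
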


\begin{proof}
  If $\chr(k)=p>0$ let $k'$ be the completion of $k^{p^{-\infty}}$, and otherwise let $k'=k$.  Let $X' = X_{k'}$ and $\pi = \pi_{k'/k}\colon X'\to X$, let $\iota\colon X'_{\red}\inject X'$ be the inclusion of the underlying reduced space, and let $\nu\colon\td X'\to X'_{\red}$ be the normalization.  Then $\pi$ and $\iota$ are homeomorphisms preserving boundaries by Lemmas~\ref{lem:reduce.to.perfect} and~\ref{lem:reduce.to.reduced}, and $\nu$ is a boundary-preserving isomorphism in a neighborhood of the unique point $x'\in\td X'$ lying over $x$ by Lemma~\ref{lem:reduce.to.smooth}.  The composition $u\circ\pi\circ\iota\circ\nu$ is subharmonic, so we may replace $X$ by $\td X'$ to assume that $X$ is rig-smooth.  After shrinking $X$, we may assume that $X$ is strictly affinoid.  By~\cite[Corollary~1.3.6]{berkovic90:analytic_geometry}, if $K$ is the completion of an algebraic closure of $k$ then $X_K\to X$ is an open map, and by Lemma~\ref{lem:base.change.boundaries} there is a point of $\del X_K$ lying over $x$, so we may extend scalars to assume that $k$ is algebraically closed.

  By the semistable reduction theorem~\cite[Th\'eor\`eme~2.3.8]{thuillier05:thesis}, there exists a semistable (formal) model $\fX$ of $X$.  Let $\red\colon X\to\fX_s$ denote the reduction map, which is anti-continuous.  Let $\xi\in\fX_s$ be any point.  As explained in~\cite[Theorem~4.6]{baker_payne_rabinoff13:analytic_curves} (see also \cite[Propositions 2.2 and 2.3]{bosch_lutkeboh85:stable_reduction_I}), if $\xi$ is a generic point then $\red\inv(\xi)$ is a single type-$2$ point of $X$; if $\xi$ is a smooth closed point then $\red\inv(\xi)$ is isomorphic to an open unit ball; and if $\xi$ is a node then $\red\inv(\xi)$ is isomorphic to an open annulus.  In particular, if $\xi$ is a closed point then $\red\inv(\xi)$ is connected and open.

  By~\cite[Lemme~6.5.1]{chambert_ducros12:forms_courants}, the closure of $\zeta = \red(x)$ is not proper, so $\zeta$ is a generic point of $\fX_s$.  Let $Y\subset\fX_s$ denote its closure.  Then $\red\inv(Y)$ is an open neighborhood of $x$ in $X$.  Let $\xi\in Y$ be a closed point and let $U = \red\inv(\xi)$.  Then $x$ is a limit point of $U$: one can use~\cite[Lemma~3.2]{baker_payne_rabinoff13:analytic_curves}, or one can see this using retraction to the skeleton $S(\fX)$ from~\cite[Th\'eor\`eme~2.2.10]{thuillier05:thesis}; in this case, $x$ is a vertex of $S(\fX)$, and either $U$ deformation retracts onto $x$ if $\xi$ is smooth, or $U$ contains an edge of $S(\fX)$ adjacent to $x$ when $\xi$ is a node.  It follows that $\red\inv(Y)$ is contained in the union of $\{x\}$ with all connected components $U$ of $\Int(X)$ having $x$ as a limit point.  We will show that $u\equiv-\infty$ on every such $U$.

   We adapt the proof of~\cite[Lemme~3.1.9]{thuillier05:thesis}.  Suppose that $u\not\equiv-\infty$ on some such $U$.  Then $u|_U$ is subharmonic in Thuillier's sense as $U$ is smooth and connected and $u$ is subharmonic in our sense.   By~\cite[Lemme~2.3.9]{thuillier05:thesis}, for every $N\in\Z_{\geq0}$, there exists a unique harmonic function $h_N\colon X\to\R$ such that $h_N(x) = -N$ and
  \[ h_N(y) =
    \begin{cases}
      u(y) & \text{if } u(y) > -\infty \\
      0 & \text{otherwise}
    \end{cases}
  \]
  for every $y \in \partial X \setminus \{x\}$.
  By construction, we have $u-h_N\leq 0$ on $\del X$.  By~\cite[Corollaire~3.1.12]{thuillier05:thesis}, if the function $u-h_N$ attains a local maximum at a point $y\in U$, then $u-h_N$ takes a constant value $C$ on $U$.  By semicontinuity, we have
  \[ -\infty = u(x)-h_N(x) \geq \limsup_{\substack{z\to x\\z\in U}}\bigl( u(z)-h_N(z) \bigr) = C, \]
  which is a contradiction.  Thus $u-h_N$ does not attain a local maximum on $U$.  It does attain a maximum on the closure $\bar U$, which is compact.  We have $\bar U\subset U\cup\del X$ and $u-h_N\leq 0$ on $\del X$, so $u-h_N\leq 0$ on $U$.  Clearly $-h_N(x)\to\infty$ as $N\to\infty$, so by~\cite[Lemme~2.3.10]{thuillier05:thesis}, we have $-h_N\to\infty$ on $U$.  Since $u\leq h_N$ on $U$ for all $N$, it follows that $u\equiv-\infty$ on $U$.
\end{proof}

\begin{proof}[Proof of Proposition~\ref{prop:subharmonic.properties}(\ref{shp.-infty})]
  We replace $X$ by $\Int(X)\cup\{x\}$ to assume that $\del X = \emptyset$ or $\del X = \{x\}$.  Replacing $X$ by the irreducible component of $X$ containing $x$ (see~\artref{unique irreducible}), we assume in addition that $X$ is irreducible.  Let $k'=k_r/k$ be an analytic extension field for some $r\in \R_{>0}^n$ as in~\artref{art:shilov.section} such that $X_{k'}$ is strictly $k'$-analytic.  Note that $X_{k'}$ is again irreducible: any irreducible component of $X_{k'}$ is defined over a finite, separable extension of $k$ inside $k'$~\cite[1.5.6]{ducros18:families}, but $k$ is algebraically closed in $k'$~\cite[3.2.11]{ducros14:structur_des_courbes_analytiq}.  The fiber $\pi_{k'/k}\inv(x)$ has a unique Shilov boundary point $x' = \sigma(x)$, so by Lemma~\ref{lem:fiber.boundary}, if $\del X = \{x\}$ then $\del X_{k'} = \{x'\}$; of course, if $\del X = \emptyset$ then $\del X_{k'} = \emptyset$ as well.  Replacing $X$ by $X_{k'}$ and $x$ by $x'$, we may assume that $X$ is strictly $k$-analytic and irreducible over a non-trivially valued field $k$, and that $\del X = \{x\}$ or $\del X = \emptyset$.

  If $\chr(k)=p>0$ let $k'$ be the completion of $k^{p^{-\infty}}$, and otherwise let $k'=k$.  Let $X' = X_{k'}$ and let $\pi = \pi_{k'/k}\colon X'\to X$, let $\iota\colon X'_{\red}\inject X'$ be the inclusion of the underlying reduced space, and let $\nu\colon\td X'\to X'_{\red}$ be the normalization.  Since $\pi$ and $\nu$ are homeomorphisms, the space $X'_{\red}$ is again irreducible, so $\td X'$ is connected by~\cite[Proposition~5.16]{ducros09:excellent}.  Since $x$ is not a rig-point of $X$, there exists a unique point $x'\in\td X'$ lying over $x$ by Lemmas~\ref{lem:reduce.to.perfect}, \ref{lem:reduce.to.reduced} and~\ref{lem:reduce.to.smooth}.  We must show that $u\circ\pi\circ\iota\circ\nu\equiv-\infty$ on $\td X'$.  Since $u\circ\pi\circ\iota\circ\nu$ is subharmonic by Proposition~\ref{prop:subharmonic.properties}(\ref{shp.scalars},\ref{shp.functoriality}), we may replace $X$ by $\td X'$ to assume that $X$ is rig-smooth and connected (and still that $\del X = \emptyset$ or $\del X = \{x\}$ using the quoted lemmas again).

  If $\del X=\emptyset$ then $X$ is smooth and connected, so $u$ is either identically $-\infty$ or it is subharmonic in Thuillier's sense.  The latter case is impossible when $u(x)=-\infty$ by~\cite[Proposition~3.4.10]{thuillier05:thesis}.

  Suppose then that $X$ is rig-smooth and connected, and that $\del X=\{x\}$ and $u(x)=-\infty$.  By Lemma~\ref{lem:-infty.on.boundary}, there is an open neighborhood $U$ of $x$ on which $u\equiv-\infty$.  Since $X$ is connected, every connected component of $\Int(X)$ contains $x$ in its closure.  Thus $U$ intersects every connected component $C$ of $\Int(X)$.  This implies that $u$ takes the value $-\infty$ on a non-rig point of $C$, so the previous paragraph shows that $u\equiv-\infty$ on $C$.  It follows that $u\equiv-\infty$ on $X$.
\end{proof}

\begin{art} \label{art:harmonic.is.pm.subharmonic}
  Now we turn to harmonic functions.  Let $X$ be a smooth, strictly $k$-analytic curve  with $k$ non-trivially valued and let $h\colon X\to\R$ be a function.  We claim that $h$ is harmonic in the sense of~(\ref{sec:thuillier.harmonic}) if and only if $h$ and $-h$ are subharmonic (in our sense or Thuillier's---see~(\ref{art:subharmonicity.real.values})).  This is implicit in Thuillier's thesis, but it requires justification.  Suppose then that $h\colon X\to\R$ is harmonic.  Then $\pm h$ are subharmonic by~\cite[Proposition~2.3.13 and Corollaire~3.1.12]{thuillier05:thesis}.  Conversely, suppose that $\pm h$ are subharmonic.  This implies that $h$ is continuous.  Let $U\subset X$ be a strictly affinoid domain.  By~\cite[Lemme~2.3.9]{thuillier05:thesis}, there exists a unique harmonic function $h'\colon U\to\R$ that coincides with $h$ on $\del U$.  By definition of subharmonicity, we have $h\leq h'$ on $U$.  By the same argument as applied to $-h$, we see that $h=h'$ on $U$, so that $h$ is harmonic on $U$.  As $U$ was arbitrary, $h$ is harmonic.
\end{art}

By~\artref{art:harmonic.is.pm.subharmonic}, the following definition recovers Thuillier's definition of harmonicity on a smooth, strictly $k$-analytic curve, and even on a strictly affinoid rig-smooth curve using~\cite[Corollaire~2.3.14]{thuillier05:thesis}.

\begin{defn}[Harmonic functions]\label{def:harmonicity}
  Let $X$ be a $k$-analytic curve for a possibly trivially valued non-Archimedean field $k$.  A function $h\colon X\to\R$ is \defi{harmonic} if $h$ and $-h$ are both subharmonic.
\end{defn}

The advantage of this definition is that the following basic properties of harmonic functions follow immediately from the corresponding properties of subharmonic functions.

\begin{prop}\label{prop:harmonic.properties}
  Let $X$ be a $k$-analytic curve and let $h\colon X\to\R$ be a continuous function.
  \begin{enumerate}
  \item \label{hp.sheafiness} \textup{(Sheafiness)} The harmonic functions form a sheaf on $X$.
  \item \label{hp.analytic}~ If $f\in\Gamma(X,\sO_X^\times)$ then $\log|f|$ is harmonic.
  \item \label{hp.vectorspace} The harmonic functions on $X$ form a real vector space.
  \item \label{hp.limits} \textup{(Limits)} If $h$ is locally a uniform
   limit of harmonic functions, then it is harmonic.
  \item \label{hp.maximum} \textup{(Maximum Principle)} If $h$ is harmonic and $h$ attains a local extremum at $x\in\Int(X)$, then $h$ is constant in a neighborhood of $x$.
  \item \label{hp.scalars} \textup{(Extension of Scalars)} Let $k'/k$ be an analytic extension field.  Then $h$ is harmonic if and only if $h\circ\pi_{k'/k}\colon X_{k'}\to\R$ is harmonic.
  \item \label{hp.functoriality} \textup{(Functoriality)} Let $Y$ be a $k$-analytic curve and let $f\colon Y\to X$ be a morphism.  If $h$ is harmonic then $h\circ f$ is harmonic, and the converse holds if $f$ is finite and surjective.
  \item \label{hp.boundaries} \textup{(Boundaries)} If $h$ is harmonic on $\Int(X)$, then it is harmonic on $X$.
  \end{enumerate}
\end{prop}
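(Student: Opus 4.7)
The strategy is to apply Definition~\ref{def:harmonicity} and reduce each assertion to the corresponding item of Proposition~\ref{prop:subharmonic.properties} by invoking it twice, once for $h$ and once for $-h$. For~\ref{hp.sheafiness}, \ref{hp.scalars}, \ref{hp.functoriality}, and~\ref{hp.boundaries}, this is literally immediate: the sheaf condition, base-change invariance, pullback functoriality, and extension across the boundary are all characterized by a condition that is symmetric in $u$ versus $-u$, so demanding them simultaneously for $h$ and $-h$ gives the analogous statement for harmonic functions. Item~\ref{hp.analytic} follows because for $f\in\Gamma(X,\sO_X^\times)$ both $\log|f|$ and $\log|f^{-1}| = -\log|f|$ are subharmonic by Proposition~\ref{prop:subharmonic.properties}(\ref{shp.analytic}). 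For~\ref{hp.vectorspace}, given harmonic $h_1,h_2$ and real scalars $\lambda_1,\lambda_2$, write $\lambda_i = \lambda_i^+ - \lambda_i^-$ with $\lambda_i^\pm\geq 0$ and apply the cone property~\ref{shp.cone} of Proposition~\ref{prop:subharmonic.properties} to $\pm h_1,\pm h_2$ to see that $\pm(\lambda_1h_1+\lambda_2h_2)$ is a sum of nonnegative multiples of subharmonic functions.

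The maximum principle~\ref{hp.maximum} is the easiest reduction: if $h$ attains a local maximum at $x\in\Int(X)$, apply Proposition~\ref{prop:subharmonic.properties}(\ref{shp.maximum}) to the subharmonic function $h$; if $h$ attains a local minimum at $x$, apply the same item to $-h$.

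The one step that requires genuine thought is the local uniform limit property~\ref{hp.limits}, because Proposition~\ref{prop:subharmonic.properties} provides closure only under decreasing nets, not under uniform limits. The plan is to replace the given uniformly convergent sequence $(h_n)$ by a decreasing one. The question is local, so after shrinking $X$ we may assume the convergence is uniform on all of $X$ with rate $\|h_n-h\|_\infty\to 0$. Extract a subsequence $(h_{n_k})$ with $\|h_{n_k}-h\|_\infty<2^{-k}$ and set $g_k \coloneqq h_{n_k}+2^{-k+1}$. Then $g_k\geq h$ pointwise, $g_k\geq g_{k+1}$ (since $g_k - g_{k+1} = (h_{n_k}-h_{n_{k+1}}) + 2^{-k}$ and $|h_{n_k}-h_{n_{k+1}}|<2^{-k}$), and $g_k\to h$ pointwise. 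Each $g_k$ is subharmonic by parts~\ref{hp.vectorspace} and the cone property (constants are harmonic via~\ref{hp.analytic}), so $h = \inf_k g_k$ is subharmonic by Proposition~\ref{prop:subharmonic.properties}(\ref{shp.limits}). Applying the same construction to $-h_n\to -h$ shows that $-h$ is subharmonic as well, so $h$ is harmonic. The main obstacle in the whole proof is precisely this bridge from uniform to decreasing limits; once it is in place, every clause follows cleanly by symmetrizing the subharmonic theory.
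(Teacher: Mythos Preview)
Your proof follows exactly the paper's approach: the authors simply remark that ``the following basic properties of harmonic functions follow immediately from the corresponding properties of subharmonic functions,'' and in the pluriharmonic analogue (Proposition~\ref{prop: pluriharmonic properties}) they note, as you do, that a uniform limit can be rewritten as a decreasing limit of harmonic functions. There is one small arithmetic slip in your treatment of~\ref{hp.limits}: from $\|h_{n_k}-h\|_\infty<2^{-k}$ and $\|h_{n_{k+1}}-h\|_\infty<2^{-(k+1)}$ you only get $|h_{n_k}-h_{n_{k+1}}|<2^{-k}+2^{-(k+1)}=3\cdot 2^{-(k+1)}$, not $<2^{-k}$, so $g_k-g_{k+1}$ need not be nonnegative with your constants; replacing $2^{-k+1}$ by $2^{-k+2}$ in the definition of $g_k$ (or extracting a faster subsequence) fixes this immediately.
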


We will later see in Theorem \ref{pointwise convergence of pluriharmonic} that \eqref{hp.limits} holds even for pointwise limits.
The following proposition asserts that Definition~\ref{def:harmonicity} is equivalent to the notion of harmonicity in~\cite[Definition~7.2]{gubler_rabinoff_jell:harmonic_trop}. We assume that $k$ is non-trivially valued as in~\textit{ibid}.

\begin{prop}\label{prop:grj.harmonicity.curves}
  Let $X$ be a strictly $k$-analytic curve  and let $h\colon X\to\R$ be a continuous function.  The function $h$ is harmonic if and only if it is $\R$-PL and the line bundle $L_h(x)\in\Pic(\red(X,x))_\R$ defined in~(\ref{art:reduction.of.germs}) is numerically trivial for every point $x\in X$.
\end{prop}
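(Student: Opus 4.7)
The plan is to combine Theorem~\ref{PL-functions and classically psh} with the identification of classically psh with subharmonic on a curve. By the definition in the introduction, classically psh is the smallest subsheaf of upper semicontinuous $[-\infty,\infty)$-valued functions that is functorial, stable under base change, and agrees with the subharmonic sheaf on $k$-analytic curves; since subharmonicity itself satisfies these conditions by Proposition~\ref{prop:subharmonic.properties}(\ref{shp.scalars},\ref{shp.functoriality}), on a curve the two notions coincide. Hence $h$ is harmonic iff both $h$ and $-h$ are classically psh. Moreover, when $h$ is $\R$-PL we have $L_{-h}(x)=-L_h(x)$ directly from the construction in~\ref{art:reduction.of.germs}, so $L_h(x)$ is numerically trivial precisely when both $L_h(x)$ and $L_{-h}(x)$ are nef, i.e., when $h$ and $-h$ are both semipositive.

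Granted this, the \emph{backward} direction is immediate: if $h$ is $\R$-PL with $L_h(x)$ numerically trivial for every $x$, then $\pm h$ are both semipositive $\R$-PL, so by Theorem~\ref{PL-functions and classically psh} both are classically psh, hence subharmonic, and therefore $h$ is harmonic by Definition~\ref{def:harmonicity}. For the \emph{forward} direction, assume $h$ is harmonic; then $\pm h$ are subharmonic and hence classically psh. It then suffices to show that $h$ is $\R$-PL, for then Theorem~\ref{PL-functions and classically psh} applied to $\pm h$ gives that $L_h(x)$ and $L_{-h}(x)=-L_h(x)$ are both nef, making $L_h(x)$ numerically trivial.

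To prove $h$ is $\R$-PL, I would run the reduction strategy of~\ref{three steps of reduction}, starting at step (2) since $X$ is already strictly $k$-analytic: extend scalars to the completion $k'$ of $k^{p^{-\infty}}$ when $\chr(k)=p>0$ (Lemma~\ref{lem:reduce.to.perfect}), replace by the underlying reduced space (Lemma~\ref{lem:reduce.to.reduced}), pass to the normalization, and restrict to its interior (Lemma~\ref{lem:reduce.to.smooth}). The pullback of $h$ remains harmonic along this tower by Proposition~\ref{prop:harmonic.properties}(\ref{hp.scalars},\ref{hp.functoriality},\ref{hp.boundaries}) and lands on a smooth strictly $k'$-analytic curve $Y$, on which harmonicity in our sense coincides with Thuillier's by~\ref{art:harmonic.is.pm.subharmonic}. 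Thuillier's structure theorem~\cite[\S 2.3]{thuillier05:thesis} then realises such a function locally, after a finite separable extension, as the composition of a retraction to a skeleton with an affine function on the edges, which is manifestly $\R$-PL in the sense of~\ref{subsection: PL functions}.

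The step I expect to be the main obstacle is descending the $\R$-PL property from $Y$ back down to $X$. Through the normalization it is precisely Lemma~\ref{lem:reduce.to.smooth}(6); through the reduced-space inclusion it is immediate because that map is a homeomorphism of underlying topological spaces and $\R$-PL is phrased in terms of $|f|$ for invertible analytic functions. The delicate case is descent through the homeomorphism $\pi_{k'/k}\colon X_{k'}\to X$: working $\G$-locally on strictly $k$-affinoid subdomains of $X$, one must reconstruct a local representation $h=\sum_j\lambda_j\log|f_j|$ on $X$ from such a representation of $\pi_{k'/k}^*h$ on $X_{k'}$. For this I would invoke the $\R$-PL descent formalism established in~\cite{gubler_rabinoff_jell:harmonic_trop}, exploiting that $\pi_{k'/k}$ is a surjective homeomorphism and that $k'/k$ is an extremely well-behaved (purely inseparable) extension.
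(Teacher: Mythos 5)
Your route is genuinely different from the paper's. The paper proves both directions simultaneously by reducing to the rig-smooth case, which is exactly \cite[Proposition~15.7]{gubler_rabinoff_jell:harmonic_trop}, and transporting $\R$-PL-ness and numerical triviality up and down the tower (perfect closure, reduced space, normalization) via \cite[Lemma~5.3(3), Corollary~5.11, Proposition~7.6($1',2'$)]{gubler_rabinoff_jell:harmonic_trop} together with Lemma~\ref{lem:reduce.to.smooth}(6), while transporting harmonicity via Proposition~\ref{prop:harmonic.properties}(\ref{hp.scalars},\ref{hp.functoriality}). You instead outsource the equivalence ``$L_h(x)$ numerically trivial for all $x$ $\Leftrightarrow$ $\pm h$ semipositive $\Leftrightarrow$ $\pm h$ classically psh'' to Theorem~\ref{PL-functions and classically psh}. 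That is legitimate: the theorem appears later in the paper but does not depend on Proposition~\ref{prop:grj.harmonicity.curves} (it rests on \cite{GR25semipositivePL}), and it buys you a backward direction with no reduction tower and no descent of numerical triviality at all. The cost is that in the forward direction you must prove separately that a harmonic function is $\R$-PL, and that is where your sketch has a concrete gap.

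The gap: you pass to $Y=\Int(\td X')$, obtain $\R$-PL-ness of the pullback there from Thuillier's structure theory, and then propose to descend through the normalization via Lemma~\ref{lem:reduce.to.smooth}(6). But that lemma needs $h\circ\nu$ to be $\R$-PL on \emph{all} of $\td X'$, whereas your argument only produces it on $\Int(\td X')$; the finitely many boundary points of $\td X'$ (type-$2$ or type-$3$ points) are precisely where $\R$-PL-ness does not follow automatically from knowing it on the complement, so Proposition~\ref{prop:harmonic.properties}(\ref{hp.boundaries}) has no $\R$-PL analogue you can invoke. The fix is to not discard the boundary: either work affinoid-locally on the rig-smooth curve $\td X'$, using that our harmonicity agrees with Thuillier's on strictly affinoid rig-smooth curves via \cite[Corollaire~2.3.14]{thuillier05:thesis} (as recorded in~\artref{art:harmonic.is.pm.subharmonic}), where his harmonic functions are globally compositions of the retraction to a skeleton with piecewise affine functions and hence $\R$-PL on the whole affinoid --- or simply quote \cite[Proposition~15.7]{gubler_rabinoff_jell:harmonic_trop} for rig-smooth curves, as the paper does. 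Your remaining descent steps, through $X'_{\red}\inject X'$ and through the homeomorphism $\pi_{k'/k}$, are handled exactly by the references you gesture at, namely \cite[Lemma~5.3(3), Corollary~5.11]{gubler_rabinoff_jell:harmonic_trop}.
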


\begin{proof}
This is~\cite[Proposition~15.7]{gubler_rabinoff_jell:harmonic_trop} in the rig-smooth case; we reduce to this case in the usual way.  If $\chr(k)=p>0$ let $k'$ be the completion of $k^{p^{-\infty}}$, and otherwise let $k'=k$.  Let $X' = X\hat\tensor_k k'$ and let $\pi\colon X'\to X$ be the structure morphism.  Let $\iota\colon X'_{\red}\inject X'$ be the underlying reduced space and let $\nu\colon\td X'\to X'_{\red}$ be the normalization, so that $\td X'$ is rig-smooth.  Let $f\colon\td X'\to X$ denote the composition $\pi\circ\iota\circ\nu$.

  Suppose that $h$ is harmonic.  Then $h\circ f$ is harmonic by Proposition~\ref{prop:harmonic.properties}(\ref{hp.scalars},\ref{hp.functoriality}), so $h\circ f$ is $\R$-PL and $L_h(x')$ is numerically trivial for all $x'\in\td X'$ by the rig-smooth case.  Then $h$ is $\R$-PL by Lemma~\ref{lem:reduce.to.smooth}(6) and~\cite[Lemma~5.3(3), Corollary~5.11]{gubler_rabinoff_jell:harmonic_trop}, and $L_h(x)$ is numerically trivial for all $x\in X$ by~\cite[Proposition~7.6($1',2'$)]{gubler_rabinoff_jell:harmonic_trop}.  Conversely, suppose that $h$ is $\R$-PL and that $L_h(x)$ is numerically trivial for all $x\in X$.  The same holds for $h\circ f$ by~\cite[Proposition~7.6($1',2'$)]{gubler_rabinoff_jell:harmonic_trop}, so $h\circ f$ is harmonic by the rig-smooth case, and hence $h$ is harmonic by Proposition~\ref{prop:harmonic.properties}(\ref{hp.scalars},\ref{hp.functoriality}).
\end{proof}

\begin{rem}
  We still assume that $X$ is a strictly $k$-analytic curve over a non-trivially valued field $k$ and that $h\colon X \to \R$ is continuous. Proposition~\ref{prop:grj.harmonicity.curves} and Proposition~\ref{prop:harmonic.properties}(\ref{hp.scalars}) imply in particular that if $h\circ\pi_{k'/k}$ is harmonic for some analytic extension field  $k'/k$, then $h\circ\pi_{k''/k}$ is $\R$-PL for \emph{any} analytic extension $k''/k$, which is by no means obvious.  Likewise, Proposition~\ref{prop:harmonic.properties}(\ref{hp.functoriality}) implies that if $f\colon Y\to X$ is finite and surjective and $h\circ f$ is harmonic, then $h$ is $\R$-PL, and Proposition~\ref{prop:harmonic.properties}(\ref{hp.boundaries}) implies that if $h|_{\Int(X)}$ is harmonic then $h$ is $\R$-PL.
\end{rem}

\section{Classically Plurisubharmonic functions} \label{section:psh functions}

In this section we define the notion of a \defi{classically plurisubharmonic function}.
The definition follows Definition~\ref{def:subharmonicity}.

\begin{defn}[Classically Plurisubharmonic Functions]\label{def:classically.psh}
  Let $X$ be a $k$-analytic space and let $u\colon X\to\R\cup\{-\infty\}$ be an upper semicontinuous function.  We say that $u$ is \defi{classically plurisubharmonic} or \defi{classically psh} provided that, for every analytic extension field $k'/k$, every $k'$-analytic curve $Y$, and every morphism $f\colon Y\to X_{k'}$, the function $u\circ\pi_{k'/k}\circ f\colon Y\to\R\cup\{-\infty\}$ is subharmonic on $Y$ in the sense of Definition~\ref{def:subharmonicity}.
\end{defn}

\begin{artsub}
  Any class of psh functions should respect extension of scalars and should be stable under pullback, so the conditions in Definition~\ref{def:classically.psh} are necessary.  In complex analysis, this condition (without the extension of scalars) is the definition of plurisubharmonicity, which is why we call the notion ``classically psh''.
\end{artsub}

\begin{artsub}\label{art:psh.is.subharmonic}
  If $X$ is a $k$-analytic curve, then a function $u\colon X\to\R\cup\{-\infty\}$ is classically psh if and only if it is subharmonic.  Indeed, if $u$ is classically psh then taking $k'=k$ and $Y=X$ in Definition~\ref{def:classically.psh} shows that $u$ is subharmonic, and if $u$ is subharmonic then it is classically psh by Proposition~\ref{prop:subharmonic.properties}(\ref{shp.scalars},\ref{shp.functoriality}).
\end{artsub}

\begin{artsub}\label{art:check.on.smooth}
  In Definition~\ref{def:classically.psh}, one can assume that $Y$ is a connected, smooth, strictly $k'$-analytic curve: this follows from  Definition~\ref{def:subharmonicity}.
\end{artsub}

Classically psh functions satisfy the basic properties of subharmonic functions on curves.

\begin{prop}\label{prop:classically.psh.properties}
  Let $X$ be a $k$-analytic space.
  \begin{enumerate}
  \item\label{psh.sheafiness} \textup{(Sheafiness)} The classically psh functions form a sheaf on $X$.
  \item\label{psh.analytic} If $f\in\Gamma(X,\sO_X)$ then $\log|f|\colon X\to\R\cup\{-\infty\}$ is classically psh.
  \item\label{psh.cone} \textup{(Cone Property)} If $u_1,u_2\colon X\to\R\cup\{-\infty\}$ are classically psh and $\lambda_1,\lambda_2\in\R_{\geq0}$ then $\lambda_1u_1+\lambda_2u_2$ and $\max\{u_1,u_2\}$ are classically psh.
  \item\label{psh.limits} \textup{(Limits)} The infimum of a decreasing net of classically psh functions is classically psh. For arbitrary pointwise limits, we refer to Theorem \ref{classically psh and pointwise limits}.
  \item\label{psh.scalars} \textup{(Extension of Scalars)} Let $k'/k$ be an analytic extension field.  A function $u\colon X\to\R\cup\{-\infty\}$ is classically psh if and only if $u\circ\pi_{k'/k}\colon X_{k'}\to\R\cup\{-\infty\}$ is classically psh.
  \item\label{psh.functoriality} \textup{(Functoriality)} Let $Y$ be a $k$-analytic space and let $f\colon Y\to X$ be a morphism.  If $u\colon X\to\R\cup\{-\infty\}$ is classically psh then $u\circ f$ is classically psh, and the converse holds if $f$ is finite and surjective.
  \end{enumerate}
\end{prop}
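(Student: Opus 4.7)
The plan is to reduce each property to the corresponding property of subharmonic functions on curves established in Proposition~\ref{prop:subharmonic.properties}. In each case I fix a test morphism $f\colon Y\to X_{k'}$ from a $k'$-analytic curve as in Definition~\ref{def:classically.psh} and analyze the composition $u\circ\pi_{k'/k}\circ f$ on~$Y$.

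The routine items~(\ref{psh.sheafiness})--(\ref{psh.limits}) and the forward directions of~(\ref{psh.scalars}) and~(\ref{psh.functoriality}) all follow by direct unwinding. For~(\ref{psh.sheafiness}), an open cover $\{U_i\}$ of $X$ pulls back to an open cover $\{f^{-1}((U_i)_{k'})\}$ of $Y$, and sheafiness on $Y$ comes from Proposition~\ref{prop:subharmonic.properties}(\ref{shp.sheafiness}). For~(\ref{psh.analytic}), the pullback of a section of $\sO_X$ is a section of $\sO_Y$, so Proposition~\ref{prop:subharmonic.properties}(\ref{shp.analytic}) applies. Items~(\ref{psh.cone}) and~(\ref{psh.limits}) follow since sums, maxima and decreasing infima commute with pullback; one then invokes Proposition~\ref{prop:subharmonic.properties}(\ref{shp.cone},\ref{shp.limits}). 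The forward direction of~(\ref{psh.scalars}) holds because a test morphism $g\colon Z\to X_{k''}$ for $u\circ\pi_{k'/k}$ over an extension $k''/k'$ is itself a test morphism for $u$ over the composite extension $k''/k$, since $\pi_{k''/k}=\pi_{k'/k}\circ\pi_{k''/k'}$. The forward direction of~(\ref{psh.functoriality}) holds because $f_{k'}\circ g\colon Z\to X_{k'}$ is a test morphism for $u$ whenever $g\colon Z\to Y_{k'}$ is a test morphism for $u\circ f$.

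The two substantive steps are the converses. For the converse of~(\ref{psh.scalars}), suppose $u\circ\pi_{k'/k}$ is classically psh on $X_{k'}$ and let $g\colon Y\to X_{k''}$ be a test morphism from a $k''$-analytic curve. I would choose a joint analytic extension $K$ of $k'$ and $k''$ via~\artref{art:simultaneous.extensions}, base change to obtain $g_K\colon Y_K\to X_K=(X_{k'})_K$, and apply the hypothesis to deduce that $u\circ\pi_{K/k}\circ g_K=(u\circ\pi_{k''/k}\circ g)\circ\pi_{K/k''}$ is subharmonic on $Y_K$. Descending along $\pi_{K/k''}\colon Y_K\to Y$ using Proposition~\ref{prop:subharmonic.properties}(\ref{shp.scalars}) then gives subharmonicity of $u\circ\pi_{k''/k}\circ g$ on~$Y$.

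For the converse of~(\ref{psh.functoriality}), with $f\colon Y\to X$ finite and surjective and $g\colon Z\to X_{k'}$ a test morphism from a $k'$-curve, I would form the fiber product $W=Z\times_{X_{k'}}Y_{k'}$ with projections $g'\colon W\to Y_{k'}$ and $f''\colon W\to Z$, as in the diagram~\eqref{eq:functoriality.shp.diagram}. Since $f$ is finite and surjective, so is $f''$, and $W$ is separated of pure dimension one, hence is a $k'$-analytic curve. Then $(u\circ f)\circ\pi_{k'/k}\circ g'=(u\circ\pi_{k'/k}\circ g)\circ f''$ is subharmonic on $W$ by the hypothesis that $u\circ f$ is classically psh, and Proposition~\ref{prop:subharmonic.properties}(\ref{shp.functoriality}) applied to the finite surjective $f''$ yields subharmonicity of $u\circ\pi_{k'/k}\circ g$ on $Z$. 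The only real obstacle in the whole proof is verifying that $W$ remains a $k'$-analytic curve in this last step, which reduces to the preservation of finiteness and surjectivity under base change.
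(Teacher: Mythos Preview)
Your overall strategy matches the paper's, and the converse of~(\ref{psh.scalars}) is handled correctly. There are, however, two genuine gaps.

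First, in both converses you never verify that $u$ itself is upper semicontinuous on~$X$. The definition of classically psh requires this as a precondition, and it does not follow automatically from the test-curve condition. The paper handles this explicitly: for~(\ref{psh.scalars}) one uses that $\pi_{k'/k}$ is closed and surjective, and for~(\ref{psh.functoriality}) that a finite morphism is proper, hence closed. Similarly, in~(\ref{psh.analytic}) you should note that $\log|f|$ is upper semicontinuous before invoking the curve case.

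Second, and more seriously, your claim that the fiber product $W=Z\times_{X_{k'}}Y_{k'}$ is of \emph{pure} dimension one is false in general. The space $Y$ is an arbitrary $k$-analytic space, not a curve; finiteness of $f''\colon W\to Z$ only gives $\dim W\le 1$, and $W$ may well have isolated zero-dimensional irreducible components. Thus $W$ need not be a $k'$-analytic curve, and Proposition~\ref{prop:subharmonic.properties}(\ref{shp.functoriality}) does not apply directly. The paper fixes this by removing the zero-dimensional components to obtain an open and closed (and Zariski-closed) subspace $W'\subset W$ that \emph{is} a curve, and then checks that the induced map $W'\to Z$ is still finite and surjective: finite because $W'\hookrightarrow W$ is finite, surjective because the image of $W'$ is Zariski-closed of dimension~$1$ in the connected curve~$Z$. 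You flagged this step as the ``only real obstacle'' but then dismissed it as routine base-change stability; it is precisely here that an extra argument is needed.
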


\begin{proof} Properties (\ref{psh.sheafiness},\ref{psh.cone},\ref{psh.limits}) follow immediately from Proposition~\ref{prop:subharmonic.properties}(\ref{shp.sheafiness},\ref{shp.cone},\ref{shp.limits}). For Proposition~\ref{prop:classically.psh.properties}(\ref{psh.analytic}), we consider an analytic function $f$ on $X$. As $|f|$ is continuous, we deduce that $\log|f|$ is a continuous function with values in $\R \cup \{-\infty\}$ and hence it is upper semicontinuous. Then Proposition~\ref{prop:classically.psh.properties}(\ref{psh.analytic}) follows from Proposition~\ref{prop:subharmonic.properties}(\ref{shp.analytic}).

	 To prove 	Proposition~\ref{prop:classically.psh.properties}(\ref{psh.scalars}), it is clear from the definitions that if $u$ is classically psh, then $u \circ \pi_{k'/k}$ is classically psh for every analytic extension field $k'/k$. To prove the converse, we assume that $k'/k$ is an analytic  extension field and that $u'\coloneqq u \circ \pi_{k'/k}$ is a classically psh function on $X_{k'}$. Since the  morphism $\pi_{k'/k}$ is closed and surjective \artref{extension of scalars}, it is clear that $u$ is upper semicontinuous.
  Let $k''/k$ be an analytic  extension field and let $f\colon Y \to X_{k''}$ be a morphism over $k''$ from a $k''$-analytic curve $Y$. We have  to show that $u \circ \pi_{k''/k} \circ f$ is subharmonic.  As in the proof of Proposition~\ref{prop:subharmonic.properties}(\ref{shp.scalars}), we consider a simultaneous analytic  extension field $K$ of $k'$ and $k''$. It follows from the above that $u \circ \pi_{K/k}= u' \circ \pi_{K/k'}$ is classically psh and hence $u \circ \pi_{K/k} \circ f_K$ is subharmonic.   We conclude from the commutative diagram~\eqref{eq:scalars.shp.diagram} that  $u \circ \pi_{K/k} \circ f_K= u \circ \pi_{k''/k} \circ f \circ \pi_{K/k''}$ is subharmonic and hence $u \circ \pi_{k''/k} \circ f$ is subharmonic by Proposition~\ref{prop:subharmonic.properties}(\ref{shp.scalars}).

	Likewise, the proof of Proposition~\ref{prop:classically.psh.properties}(\ref{psh.functoriality}) follows the same lines as the proof of Proposition~\ref{prop:subharmonic.properties}(\ref{shp.functoriality}).
	Again, it follows from the definitions if $u$ is a classically psh function on $X$, then $u \circ f$ is a classically psh function on $Y$. Conversely, we assume that $u \circ f$ is a classically psh function for a finite surjective morphism $f\colon Y \to X$. Since finite morphisms are proper and hence closed, we deduce from upper semicontinuity of $u \circ f$ and from surjectivity of $f$ that $u$ is upper semicontinuous. It remains to show that $u \circ \pi_{k'/k} \circ g$ is subharmonic for any analytic  extension field $k'/k$ and any morphism $g\colon Z \to X_{k'}$ from a connected $k'$-analytic curve $Z$.
	Setting $W \coloneqq Z \times_{X_{k'}} Y_{k'}$, we get a morphism $g' \colon W \to Y_{k'}$ from $f$ by base change with respect to $f'\colon Y_{k'} \to X_{k'}$ as in~\eqref{eq:functoriality.shp.diagram}. Since $u  \circ f$ is classically psh, it follows from Proposition~\ref{prop:classically.psh.properties}(\ref{psh.scalars}) that $u \circ \pi_{k'/k} \circ f'= u \circ f \circ \pi_{k'/k}$ is classically psh. The morphism $f$ is finite and surjective, hence the same holds for $f'$. We conclude that the base change $f''\colon W \to Z$ of the morphism $f'$ is finite and surjective. The $k'$-analytic space $W$ has irreducible components of dimension $1$ and maybe isolated irreducible components of dimension $0$. Removing the latter, we obtain a $k'$-analytic curve $W'$ as an open and closed subset of $W$. Since $Z$ is a connected $k'$-analytic curve and $f''$ is a finite surjective map from $W$ onto $Z$, we get an induced finite surjective morphism $W' \to Z$. Indeed, as $W'$ is also Zariski-closed in $W$, we deduce from finiteness of $f''$ that $f''(W')$ is a Zariski-closed subset in $Z$ of dimension $1$ and hence $f''(W')=Z$ as $Z$ is a connected $k'$-analytic curve. Since the inclusion $W' \to W$ is obviously finite, the composition $W' \to Z$ is also finite. In the following, we replace $W$ by $W'$, which preserves the commutativity of the diagram~\eqref{eq:functoriality.shp.diagram} and has the advantage that $W$ is a $k'$-analytic curve.
        It follows that $u \circ \pi_{k'/k} \circ f' \circ g'=u \circ \pi_{k'/k}\circ g\circ f''$ is subharmonic on $W$.
	The morphism of $k'$-analytic curves $f''\colon W \to Z$ is finite and surjective, so we deduce from Proposition~\ref{prop:subharmonic.properties}(\ref{shp.functoriality}) that $u \circ \pi_{k'/k}\circ g$ is subharmonic.
	\end{proof}

        In the paper \cite{GR25semipositivePL}, we have studied semipositive $\R$-PL functions on a good strictly $k$-analytic space over a non-trivially valued non-archimedean field $k$.  This turns out to be the same as an $\R$-PL function that is classically psh, as we now show.

        \begin{thm} \label{PL-functions and classically psh}
          Suppose that the valuation on $k$ is nontrivial.  Let $X$ be a good, strictly $k$-analytic space and let $h\colon X \to \R$ be an $\R$-PL function. Then $h$ is semipositive if and only if $h$ is classically psh.
        \end{thm}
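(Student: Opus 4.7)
The plan is to reduce both implications to the case where $X$ is itself a $k$-analytic curve, and there to invoke a characterization of subharmonic $\mathbb{R}$-PL functions as those with nef residue line bundle at every point---the analog for the pair ``subharmonic / nef'' of Proposition~\ref{prop:grj.harmonicity.curves} for ``harmonic / numerically trivial''.

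For the direction ``semipositive $\Rightarrow$ classically psh'', I would verify Definition~\ref{def:classically.psh} directly.  Given an analytic extension $k'/k$ and a morphism $f\colon Y\to X_{k'}$ from a $k'$-analytic curve $Y$, the pullback $h\circ\pi_{k'/k}\circ f$ is an $\mathbb{R}$-PL function on $Y$ which is again semipositive, because semipositivity of $\mathbb{R}$-PL functions is stable under analytic base change and pullback by morphisms; these stability properties are encoded in the functorial behaviour of the residue line bundle established in~\cite{gubler_rabinoff_jell:harmonic_trop} and recorded for semipositivity in~\cite{GR25semipositivePL}.  The curve-case equivalence then yields subharmonicity of $h\circ\pi_{k'/k}\circ f$, as required.

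For the converse ``classically psh $\Rightarrow$ semipositive'', I would test nefness of $L(h)(x)$ at each $x\in X$ one curve at a time in the germ reduction.  Using the ``globalization'' construction in~\cite{gubler_rabinoff_jell:harmonic_trop}, every integral projective curve $C$ in the germ-reduction $(X,x)$ is realized, after a suitable analytic extension of scalars, by a morphism $f\colon Y\to X_{k'}$ from a $k'$-analytic curve $Y$ passing through a preimage of $x$.  Classical pshness of $h$ makes $h\circ\pi_{k'/k}\circ f$ subharmonic on $Y$, hence by the curve case its residue line bundle is nef; functoriality of the residue line bundle then transfers this into the sign inequality $\deg(L(h)|_C)\geq 0$, which is the required nefness condition.

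The principal obstacle is the curve-case equivalence itself: for a continuous $\mathbb{R}$-PL function $h$ on a strictly $k$-analytic curve, subharmonicity in the sense of Definition~\ref{def:subharmonicity} is equivalent to pointwise nefness of $L(h)$.  Following the pattern of the proof of Proposition~\ref{prop:grj.harmonicity.curves}, one reduces to the rig-smooth case using Proposition~\ref{prop:subharmonic.properties}(\ref{shp.scalars},\ref{shp.functoriality}) and Lemmas~\ref{lem:reduce.to.perfect}, \ref{lem:reduce.to.reduced}, and~\ref{lem:reduce.to.smooth}, together with the analogous invariance of $\mathbb{R}$-PL-ness and semipositivity under these reductions.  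In the rig-smooth case, one invokes the semistable reduction theorem to rewrite $h$ locally as a function of slopes along a skeleton, matches these slope data with the residue line bundles on the components of the reduction germ via the description in~\cite{gubler_rabinoff_jell:harmonic_trop}, and checks that Thuillier's subharmonicity criterion (nonnegativity of the Laplacian in the sense of~\cite[\S3.3]{thuillier05:thesis}) translates precisely into the pointwise nefness of $L(h)$.  The matching of these two positivity conditions is the most delicate step, and would likely be extracted as a separate lemma analogous to the nef part of~\cite[Proposition~15.7]{gubler_rabinoff_jell:harmonic_trop}.
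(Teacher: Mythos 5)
Your proposal is correct and follows essentially the same route as the paper: both directions reduce to the equivalence of semipositivity and subharmonicity for $\R$-PL functions on curves, combined with stability of semipositivity under base extension and pullback from \cite[Proposition~7.6]{gubler_rabinoff_jell:harmonic_trop}. The two inputs you single out as the main obstacles are exactly what the paper cites from the companion work rather than re-deriving: the curve-case equivalence is \cite[Proposition~3.1]{GR25semipositivePL} (stated only for \emph{smooth} strictly analytic curves, which suffices because one may restrict to smooth test curves by~\ref{art:check.on.smooth}, so your extension to singular curves is not needed), and your curve-by-curve nefness test in the germ reduction is precisely the content of \cite[Corollary~3.5]{GR25semipositivePL}.
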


\begin{proof}
  For $x \in X$, an $\R$-PL function $h \colon X \to \R$ induces a residue line bundle $L_h(x) \in \Pic(\red(X,x))_\R$ on the reduction $\red(X,x)$ of the germ $(X,x$): see \artref{art:reduction.of.germs}.  Recall from \cite[Definition 7.2]{gubler_rabinoff_jell:harmonic_trop} that $h$ is \emph{semipositive} at $x \in X$ if and only if the associated residue line bundle $L_h(x)$ is nef. We have shown in \cite[Proposition 3.1]{GR25semipositivePL} that for a smooth strictly $k$-analytic curve, an $\R$-PL function is semipositive if and only if it is subharmonic.  	Since semipositivity is stable under base extension and pull-back \cite[Proposition 7.6]{gubler_rabinoff_jell:harmonic_trop}, we conclude that a semipositive $\R$-PL function on $X$ is classically psh.

  The converse follows from \cite[Corollary 3.5]{GR25semipositivePL}, which shows that an $\R$-PL function $h$ on $X$ is semipositive if and only if there is an algebraically closed analytic extension field $F$ of $k$ such that for every smooth $F$-analytic curve $C$  and every morphism $\varphi\colon C \to X_F$, we have $h \circ \varphi$ semipositive.
\end{proof}

\section{Connectivity by curves} \label{section:connectivity.by.curves}

We will prove a maximum principle for classically psh functions by reducing to the case of curves.  For this we will need to prove that it is possible to connect a point of an analytic space to its neighbors using analytic curves, potentially defined over large field extensions.

\begin{defn}\label{def:analytic.curve}
  Let $X$ be a $k$-analytic space.  An \defi{analytic curve in $X$} is a subset $\Gamma\subset X$ satisfying the following property: there is an analytic  extension field $k'/k$, a compact, separated  $k'$-analytic space $C$ of dimension at most $1$, and a morphism $\phi\colon C\to X_{k'}$, such that $\Gamma = \pi_{k'/k}\circ\phi(C)$.

  An analytic curve $\Gamma$ in $X$ is \defi{overconvergent} if there exist $(k',C,\phi)$ as above such that $\phi$ factors as
  \begin{equation}\label{eq:overconvergent}
    \begin{tikzcd}
      C \rar["\psi"] \drar["\phi"'] & \Int(C') \rar[hook] & C' \dlar["\phi'"] \\
      & X_{k'}
    \end{tikzcd}
  \end{equation}
  where $C'$ is a separated $k'$-analytic space of dimension at most $1$, and $\phi'$ and $\psi$ are morphisms.
\end{defn}

\begin{artsub}
  The compactness hypothesis in Definition~\ref{def:analytic.curve} is crucial.  When $X$ is Hausdorff, it implies that if $\Gamma,\Gamma'$ are analytic curves in $X$ such that $\Gamma\cup\Gamma'$ is connected, then $\Gamma\cap\Gamma'\neq\emptyset$ (the compact sets $\Gamma,\Gamma'$ are closed, so they cannot be disjoint by connectedness).  However, a non-proper compact curve has a nonempty boundary, which causes problems when trying to reduce the maximum principle to the case of curves.  This is the reason for introducing overconvergent curves in $X$.
\end{artsub}

\begin{artsub}\label{art:larger.extension}
  One can always pass to a larger analytic extension field $k''/k'$ in Definition~\ref{def:analytic.curve}. Indeed, we have a commutative diagram
  \[
    \begin{tikzcd}
      C_{k''} \rar["\phi_{k''}"] \dar[two heads] & X_{k''} \drar["\pi_{k''/k}"] \dar["\pi_{k''/k'}"'] \\
      C \rar["\phi"'] & X_{k'} \rar["\pi_{k'/k}"'] & X\rlap.
    \end{tikzcd}
  \]
  In the overconvergent case, extending scalars in~\eqref{eq:overconvergent} gives a commutative diagram
  \[
    \begin{tikzcd}
      C_{k''} \drar["\phi_{k''}"'] \rar["\psi_{k''}"] & \Int(C')_{k''} \rar[hook]
      & \Int(C'_{k''}) \rar[hook] & C'_{k''} \arrow[dll, bend left=10, "\psi_{k''}"] \\
      & X_{k''}
    \end{tikzcd}
  \]
  where $\pi_{k''/k'}\inv(\Int(C')) = \Int(C')_{k''}\inject\Int(C'_{k''})$ by~\artref{relative boundary}\eqref{boundary and base change}.  In particular, we may always assume that $C$ (and $C'$) are \emph{strictly} $k'$-analytic.
\end{artsub}

We say that an analytic curve $\Gamma$ in $X$ is \defi{connected} if it is connected as a topological space.

\begin{lem}\label{lem:analytic.curve.properties}
  Let $X$ be a $k$-analytic space and let $\Gamma\subset X$ be an analytic curve in $X$.
  \begin{enumerate}
  \item\label{acp.components}
    The connected components of $\Gamma$ are open in $\Gamma$ and are also analytic curves in $X$.
  \item\label{acp.union} If $\Gamma'\subset X$ is another analytic curve in $X$, then $\Gamma\cup\Gamma'$ is an analytic curve in~$X$.
  \item\label{acp.image} If $f\colon X\to Y$ is a morphism of $k$-analytic spaces, then $f(\Gamma)$ is an analytic curve in~$Y$.
  \item\label{acp.pullback} If $f\colon Y\to X$ is a finite morphism of $k$-analytic spaces, then $f\inv(\Gamma)$ is an analytic curve in $Y$.
  \item\label{acp.scalars} Let $k'/k$ be an analytic extension field. If $\Gamma'$ is an analytic curve in~$X_{k'}$, then $\pi_{k'/k}(\Gamma')$ is an analytic curve in~$X$.
  \end{enumerate}
  These assertions are also valid after replacing ``analytic curve'' with ``overconvergent analytic curve''.
\end{lem}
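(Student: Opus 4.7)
My plan is to produce, for each of the five assertions, an explicit triple $(k'', D, \psi)$ realizing the claimed analytic curve from the given data $(k', C, \phi)$ for $\Gamma$, and in each case to observe that the overconvergent case is obtained by carrying the corresponding $C'$ along with the construction. Parts (2), (3), and (5) are essentially formal bookkeeping. For (2), I would use \artref{art:larger.extension} to pass to a simultaneous analytic extension $K$ of $k'$ and $k''$ and then take the disjoint union $C_K \sqcup C''_K$ (together with $C'_K \sqcup (C'')'_K$ in the overconvergent case). For (3), I would exploit the commutative base-change square $f \circ \pi_{k'/k} = \pi_{k'/k} \circ f_{k'}$ and simply replace $\phi$ (and $\phi'$) by $f_{k'}\circ \phi$ (and $f_{k'}\circ\phi'$). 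For (5), one just composes structure maps via $\pi_{k'/k} \circ \pi_{k''/k'} = \pi_{k''/k}$, so the same $(k''/k, C, \phi)$ works directly.

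For part (1), the key input is local connectedness of $k$-analytic spaces (Remark~1.2.4(iii) of~\cite{berkovic93:etale_cohomology}): the compact space $C$ has only finitely many connected components $C_1,\ldots,C_n$, each compact. The images $\Gamma_i \coloneqq \pi_{k'/k}\phi(C_i)$ are compact and connected, and $\Gamma = \bigcup_i \Gamma_i$. Each connected component of $\Gamma$ is then a union of those $\Gamma_i$ that can be linked by a chain with nonempty consecutive intersections, hence is a finite union of closed sets; since there are only finitely many components in total, each is clopen. The corresponding clopen subspace $\bigsqcup_{j\in J} C_j \subset C$ is again compact, separated, and of dimension at most $1$, and exhibits the component as an analytic curve via the restriction of $\phi$. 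The factorization through $\Int(C')$ in Definition~\ref{def:analytic.curve} restricts without change, so overconvergence is preserved.

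The main step, and the only one with genuine content, is part (4). Given $f\colon Y \to X$ finite and $\Gamma = \pi_{k'/k}\phi(C)$, I would form the fiber product $D \coloneqq Y_{k'} \times_{X_{k'}} C$ with its two projections; let $\psi\colon D \to Y_{k'}$ be the first. Finiteness is stable under base change, so the projection $D \to C$ is finite, whence $D$ is compact, separated, and of dimension at most~$1$. The inclusion $\pi_{k'/k}\psi(D) \subset f^{-1}(\Gamma)$ is immediate from the commutative square. For the reverse inclusion, given $y'\in f^{-1}(\Gamma)$ pick $c\in C$ with $f(y')=\pi_{k'/k}\phi(c)$; since $Y_{k'} = Y \times_{\sM(k)} \sM(k') = Y \times_X X_{k'}$, the nonemptiness of fiber products recalled in \artref{fiber product} (which is where Gruson's theorem enters) produces a point $y \in Y_{k'}$ lying simultaneously over $y'$ and $\phi(c)$, so $(y,c)$ is a point of $D$ with $\pi_{k'/k}\psi(y,c) = y'$. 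For the overconvergent case, form $D' \coloneqq Y_{k'} \times_{X_{k'}} C'$ and factor $D \to D'$ through the preimage of $\Int(C')$. Since $D' \to C'$ is finite we have $\Int(D'/C') = D'$, so the transitivity formula~\eqref{eq:Int.composition} forces that preimage into $\Int(D')$, giving the required factorization $D \to \Int(D') \hookrightarrow D'$. The surjectivity statement via fiber products is the one non-cosmetic step; everything else is an exercise in shuffling the defining data.
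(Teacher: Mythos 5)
Your proposal is correct and follows essentially the same route as the paper's proof: decomposing $C$ into its finitely many connected components for (1), disjoint unions over a common extension field for (2), composition for (3) and (5), and for (4) the fiber product $Y_{k'}\times_{X_{k'}}C$ together with the nonemptiness of fibers of fiber products (Gruson) for surjectivity and the relative-interior transitivity formula for overconvergence. The only cosmetic difference is that your "$(y,c)$ is a point of $D$" in (4) tacitly uses the same nonemptiness fact a second time, which the paper spells out explicitly.
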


\begin{proof}
  In each part except~(\ref{acp.scalars}) we fix $(k',C,\phi)$ as in Definition~\ref{def:analytic.curve} such that $\Gamma = \pi_{k'/k}\circ\phi(C)$.  If $\Gamma$ is overconvergent then we also fix $(C',\phi',\psi)$ as in the second part of the definition.

  \medskip(\ref{acp.components})\;
  Let $C_1,\ldots,C_n$ be the connected components of $C$, and let $\Gamma_i = \pi_{k'/k}\circ\phi(C_i)$.  Note that each $\Gamma_i$ is compact and connected.  Hence any connected component $\Sigma$ of $\Gamma$ is the union of all $\Gamma_i$ that meet $\Sigma$.  We have $\Sigma = \pi_{k'/k}\circ\phi\bigl(\Djunion_{\Sigma\cap\Gamma_i\neq\emptyset} C_i \bigr)$, which is a connected analytic curve in $X$.  The complement of $\Sigma$ in $\Gamma$ is the union of all $\Gamma_i$ disjoint from $\Sigma$, so that $\Sigma$ is open and closed in $\Gamma$. We conclude that the analytic curve $\Sigma$ in $X$ is a connected component of $\Gamma$.

  \medskip(\ref{acp.union})\;
  Choose $(k'',D,\eta)$ as in Definition~\ref{def:analytic.curve} such that $\Gamma' = \pi_{k''/k}\circ\eta(D)$.  Passing to a common analytic extension field of $k'$ and $k''$~\artref{art:simultaneous.extensions}, we may assume $k'=k''$ by~\artref{art:larger.extension}.  Then $\Gamma\cup\Gamma'$ is the image of $C\djunion D$ under $\phi\djunion\eta$.  The overconvergent case follows similarly by taking disjoint unions.

  \medskip(\ref{acp.image})\;
  This is immediate from the definitions.

  \medskip(\ref{acp.pullback})\;
  Consider the commutative diagram
  \[
    \begin{tikzcd}
      {D} \rar["\eta"] \dar["g"'] \drar[phantom, "\square"] & Y_{k'} \rar["\pi_{k'/k}"] \dar["f_{k'}"] & Y \dar["f"] \\
      C \rar["\phi"'] & X_{k'} \rar["\pi_{k'/k}"'] & X
    \end{tikzcd}
  \]
  in which $D = C\times_{X_{k'}}Y_{k'}$.  Then $g\colon D\to C$ is finite, so $D$ is a compact, separated $k'$-analytic space of dimension at most $1$.  Let $\Gamma' = \pi_{k'/k}\circ\eta(D)$.  This is an analytic curve in $Y$ contained in $f\inv(\Gamma)$; we claim $\Gamma' = f\inv(\Gamma)$.  Let $x\in\Gamma$ and let $y\in f\inv(x)$.  Choose $z\in C$ such that $\pi_{k'/k}\circ\phi(z) = x$, and let $x' = \phi(z)$.  There exists $y'\in Y_{k'}$ such that $\pi_{k'/k}(y') = y$ and $f_{k'}(y') = x'$ because $\sH(y)\hat\tensor_{\sH(x)}\sH(x')\neq 0$~\artref{art:simultaneous.extensions}.  Likewise there exists $w\in D$ such that $\eta(w)=y'$ and $g(w)=z$; for such $w$ we have $\pi_{k'/k}\circ\eta(w) = y$, which proves $\Gamma' = f\inv(\Gamma)$.

  Suppose now that $\Gamma$ is overconvergent.  Consider the commutative diagram
  \[
    \begin{tikzcd}
      D \rar["\psi'"] \dar["g"']
      \arrow[rrr, "\eta", rounded corners, to path={
        -- ([yshift=2ex]\tikztostart.north)
        -- ([yshift=2ex]\tikztotarget.north)\tikztonodes
        -- (\tikztotarget)
      }]
      & \Int(D') \dar \rar[hook] \drar[phantom, "\square"]
      & D' \dar \rar["\eta'"] \drar[phantom, "\square"]
      & Y_{k'} \dar["f_{k'}"] \\
      C \rar["\psi"']
      \arrow[rrr, "\phi"', rounded corners, to path={
        -- ([yshift=-2ex]\tikztostart.south)
        -- ([yshift=-2ex]\tikztotarget.south)\tikztonodes
        -- (\tikztotarget)
      }]
      & \Int(C') \rar[hook] & C' \rar["\phi'"'] & X_{k'}
    \end{tikzcd}
  \]
  in which $D' = C'\times_{X_{k'}}Y_{k'}$.  Again $D'$ is a separated $k'$-analytic space of dimension at most~$1$.  The middle square is Cartesian by properties \eqref{finite and boundary} and \eqref{transitivity of boundaries} of~\artref{art:relative.interior}, which implies that there is a unique morphism $\psi'\colon D\to\Int(D')$ making the left square commute.  Hence $\Gamma'$ is overconvergent as well.

  \medskip(\ref{acp.scalars})\;
  Choose $(k'',C,\phi)$ as in Definition~\ref{def:analytic.curve} such that $\Gamma' = \pi_{k''/k'}\circ\phi(C)$.  Then
  \[ \pi_{k'/k}(\Gamma') = \pi_{k'/k}\circ\pi_{k''/k'}\circ\phi(C) = \pi_{k''/k}\circ\phi(C) \]
  under the identification $X_{k''} = (X_{k'})_{k''}$.  If $\Gamma'$ is overconvergent then clearly $\pi_{k'/k}(\Gamma')$ is overconvergent.
\end{proof}

\begin{defn}\label{def:conn.by.curves}
  We say that $X$ is \defi{connected by curves} (resp.\ \defi{connected by overconvergent curves}) if, for every pair of points $x,y\in X$, there exists a connected (resp.\ connected and overconvergent) analytic curve $\Gamma$ in $X$ such that $x,y\in\Gamma$.
\end{defn}

Berkovich~\cite[Theorem~3.2.1]{berkovic90:analytic_geometry} proved that a connected $k$-analytic space is path-connected.  The paths that he constructs (or rather, a variant of these paths) are in fact contained in analytic curves, as we will explain below.  Therefore, the following fact should be regarded as an extension of Berkovich's theorem.

\begin{thm}\label{thm:conn.by.curves}
  Let $X$ be a connected $k$-analytic space.  Then $X$ is connected by curves, and if $\del X = \emptyset$ then $X$ is connected by overconvergent curves.
\end{thm}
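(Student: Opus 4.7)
The plan is to adapt Berkovich's strategy for proving path-connectedness \cite[Theorem~3.2.1]{berkovic90:analytic_geometry}, replacing topological paths with analytic curves. Define a relation $\sim$ on $X$ by declaring $x \sim y$ if and only if some connected analytic curve in $X$ contains both points, and let $\sim_{\mathrm{oc}}$ denote the analogous relation using connected overconvergent analytic curves. Reflexivity is immediate: the singleton $\{x\}$ is itself an analytic curve via $k' = \sH(x)$ and $C = \sM(\sH(x))$ (of dimension zero). Symmetry is obvious, and transitivity follows from Lemma~\ref{lem:analytic.curve.properties}(\ref{acp.union}): if $\Gamma$ contains $x,y$ and $\Gamma'$ contains $y,z$ with both connected, then $\Gamma \cup \Gamma'$ is an analytic curve in $X$, and it is connected because both pieces are compact, connected, and share $y$. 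The same argument works for $\sim_{\mathrm{oc}}$. Once these are established as equivalence relations, it suffices by connectedness of $X$ to show that $\sim$-classes (respectively $\sim_{\mathrm{oc}}$-classes when $\del X = \emptyset$) are open.

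Openness of equivalence classes reduces to the local statement that every $x \in X$ has an open neighborhood $V$ such that $y \sim x$ for all $y \in V$. I would first reduce to the case where $X$ is $k$-affinoid, by working inside a chart of a quasi-net covering of $x$ and chaining the resulting curves across overlapping charts using transitivity. Via the Shilov section~\artref{art:shilov.section} together with Lemmas~\ref{lem:reduce.to.perfect}, \ref{lem:reduce.to.reduced}, and~\ref{lem:reduce.to.smooth}, I would further reduce to the case of a reduced, rig-smooth, strictly $k$-analytic space over a perfect field, transporting the resulting curves back and forth via the finite-surjective and base-change functoriality recorded in Lemma~\ref{lem:analytic.curve.properties}(\ref{acp.pullback},\ref{acp.scalars}). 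In this reduced setting, I would construct the local curve by choosing a finite surjective Noether-normalization-type morphism $\phi\colon X \to \bB^n$ and an algebraic line $L \subset \bB^n$ passing through $\phi(x)$ and $\phi(y)$; the preimage $\phi^{-1}(L)$ is a compact separated one-dimensional analytic space, and its image in $X$ is the desired analytic curve after selecting a suitable connected component via Lemma~\ref{lem:analytic.curve.properties}(\ref{acp.components}).

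For the overconvergent refinement, the hypothesis $\del X = \emptyset$ forces $x \in \Int(X)$ and permits the affinoid chart $U$ to be enlarged strictly within $X$. Correspondingly, $L$ can be extended to an algebraic curve in a slightly larger closed polydisc $\bB^n$ of weighted radii $s_i > 1$, and pulling back yields the factorization~\eqref{eq:overconvergent} with $C = \phi^{-1}(L)$ and $C'$ the enlargement, realizing an overconvergent curve in $X$. The main obstacle — and precisely the step where Berkovich's original paths fail to suffice — is the construction step itself: one must ensure that $\phi^{-1}(L)$ genuinely contains the specified point $x$ (rather than only some Galois conjugate or base-change companion of it) and that a single connected component contains both $x$ and $y$. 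Overcoming this will likely require careful use of the boundary-descent Lemma~\ref{lem:base.change.boundaries}, the behaviour of irreducible components under analytic field extensions noted in~\artref{unique irreducible}, and a genericity argument in the choice of the line $L$ controlling the fibers of $\phi$.
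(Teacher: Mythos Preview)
Your overall framework (the equivalence relation and the reduction to showing its classes are open) matches the paper, but the construction step has a genuine gap.  The proposal to take ``an algebraic line $L\subset\bB^n$ passing through $\phi(x)$ and $\phi(y)$'' does not make sense for general Berkovich points: if $\phi(x)$ is, say, the Gauss point of $\bB^n$ (or any point whose graded residue field has transcendence degree $>1$), then no one-dimensional Zariski-closed subset of $\bB^n$ over $k$ contains it.  You anticipate trouble here, but the remedies you list (Lemma~\ref{lem:base.change.boundaries}, irreducible-component behaviour, genericity of $L$) do not address this obstruction.  A secondary problem: Lemma~\ref{lem:reduce.to.smooth} is specific to curves (normal $\Rightarrow$ regular $\Rightarrow$ rig-smooth uses dimension~$1$), so invoking it to make the ambient space $X$ rig-smooth is illegitimate, and in any case the paper never needs such a reduction.

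The paper's route is different and is the content of~\artref{art:paths.in.Bn}--\ref{lem:path.locality}.  Rather than seeking a single curve through two prescribed points of $\bB^n$, it connects each point $y\in\bB^n$ to the \emph{origin} by a recursively defined path $[y,0]$: at each stage one travels inside the fiber $\phi^{-1}(y)\cong\bB^1_{\sH(y)}$ (which \emph{is} an analytic curve in the sense of Definition~\ref{def:analytic.curve}, over the extension field $\sH(y)$) down to the zero section, and then recurses.  The zero section $\bB^{n-1}\cong\bB^{n-1}\times\{0\}\hookrightarrow\bB^n$ is a morphism, which is precisely why this works where Berkovich's original Gauss-section paths do not (see~\artref{art:paths.in.Bn} and the remark preceding it).  Lemma~\ref{lem:paths.in.curves} shows each $[y,0]$ sits in a connected (overconvergent, if $y\in\bB^n_+$) analytic curve, and the locality Lemma~\ref{lem:path.locality} plays the role of Berkovich's~\cite[Lemma~3.2.5]{berkovic90:analytic_geometry}: for $x'$ near $x$, the connected components of $\phi^{-1}([y,0])$ and $\phi^{-1}([y',0])$ through $x$ and $x'$ share a point of $\phi^{-1}(0)$, so the enveloping analytic curves meet.
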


This result can be found in different forms in the literature.  De Jong proves in~\cite[Proposition~6.1.1]{dejong95:formalrigid} that any two rig-points of a strictly $k$-analytic space over a discretely valued field can be connected by strictly $k$-affinoid curves.  Berkovich proves in~\cite[Theorem~4.1.1]{berkovic07:integration} that any two rig-points in a connected, boundaryless $k$-analytic space (for arbitrary $k$) can be connected by boundaryless $k$-analytic curves (of a special type).  Using some additional arguments involving base change to a larger analytic field, the second part of our Theorem~\ref{thm:conn.by.curves} can be derived from this result.  Here we give a relatively elementary proof that works in both cases.

In order to prove Theorem~\ref{thm:conn.by.curves}, we construct some paths and analytic curves in $k$-analytic spaces, following Berkovich.  See~\artref{polydiscs and balls} for the notations $\bB^n, \bB^n_+$, etc.

\begin{art}[Paths in $\bB^1$]\label{art:paths.in.B1}
  For $x,y\in\bB^1=\sM(k\angles{T})$ we let $[x,y]$ denote the unique path from $x$ to $y$.  This can be described as follows.  Let $m(x,y)$ be the maximal point of the smallest closed ball containing $x$ and $y$, and let $r(x,y)$ be its radius.  Then $[x,m(x,y)]$ is the union of $\{x\}$ and the maximal points of all closed balls $\bB^1(x,\rho)$ containing $x$ with radius $\rho\leq r(x,y)$, and likewise for $[m(x,y),y]$.  The path $[x,y]$ is the concatenation $[x,m(x,y)]\cup[m(x,y),y]$.
\end{art}

\begin{art}[Paths in relative balls]\label{art:paths.in.Y}
  Let $X = \sM(\sA)$ be a $k$-affinoid space, let $Y = X\times\bB^1 = \sM(\sA\angles T)$, and let $\phi\colon Y\to X$ be projection onto the first factor.  For $x\in X$ and $y_1,y_2\in\phi\inv(x)$ we let $[y_1,y_2]$ denote the unique path from $y_1$ to $y_2$ in the fiber $\phi\inv(x)\cong\bB^1_{\sH(x)}$.  The inclusion $\bB^1_{\sH(x)}\cong\phi\inv(x)\inject Y$ is a connected analytic curve in $Y$: there is a canonical isomorphism $\phi\inv(x)\cong(Y\hat\tensor_k\sH(x))\times_{X\hat\tensor_k\sH(x)}\sM(\sH(x))$, with  $\phi\inv(x)\inject Y$ corresponding to projection onto the first factor followed by the structure morphism $Y\hat\tensor_k\sH(x)\to Y$.
\end{art}

\begin{art}[Paths to the Gauss point]
  Berkovich recursively uses the construction in~\ref{art:paths.in.Y} to connect any point in $\bB^n$ to the Gauss point $\eta_n\in\bB^n$.  Explicitly, in the case $n=2$, we let $\phi\colon\bB^2\to\bB^1$ denote projection onto the first factor, as above.  Let $\sigma\colon\bB^1\to\bB^2$ be the section of $\phi$ sending $x\in\bB^1$ to the Gauss point of $\phi\inv(x)$, and note that $\sigma(\eta_1) = \eta_2$.  For $x\in\bB^2$, the path from $x$ to $\eta$ is defined to be $[x,\sigma(\phi(x))]\cup\sigma([\phi(x),\eta_1])$.

  These paths are not suitable for proving that $\bB^n$ is connected by curves: the section $\sigma$ is continuous, but it is not a morphism of analytic spaces, so while $[\phi(x),\eta_1]$ is contained in the analytic curve $\bB^1$, its image under $\sigma$ is not an analytic curve in $\bB^2$.
\end{art}

\begin{art}[Paths to zero]\label{art:paths.in.Bn}
  Instead, we recursively define a path from a point $x\in\bB^n$ to the origin $0$.  When $n=1$ this is just the path $[x,0]$ defined in~\ref{art:paths.in.B1}.  In general, let $\phi\colon\bB^n=\bB^{n-1}\times\bB^1\to\bB^{n-1}$ be the projection onto the first factor, and let $\sigma\colon\bB^{n-1}\to\bB^n$ be the inclusion $\bB^{n-1}\cong\bB^{n-1}\times\{0\}\inject\bB^n$ (this is the fiber over $0$ of the projection onto the second factor $\bB^{n-1}\times\bB^1\to\bB^1$).  Then $\sigma$ is a section of $\phi$.  We define $[x,0]$ to be the concatenation of the path $[x,\sigma(\phi(x))]$ from~\ref{art:paths.in.Y} and $\sigma([\phi(x),0])$, where $[\phi(x),0]\subset\bB^{n-1}$ is defined from the recursion.
\end{art}

\begin{lem}\label{lem:paths.in.curves}
  For any point $x\in\bB^n$, the path $[x,0]$ is contained in a connected analytic curve in $\bB^n$.  If $x\in\bB^n_+$ then $[x,0]$ is contained in a connected, overconvergent analytic curve in $\bB^n_+$.
\end{lem}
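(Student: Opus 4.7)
My plan is to induct on $n$, following the recursive definition of $[x,0]$ given in~\artref{art:paths.in.Bn}. For the base case $n=1$, the path $[x,0]$ already lies in $\bB^1$, which is a compact, separated $k$-analytic space of pure dimension one, and hence a connected analytic curve in $\bB^1$ in the sense of Definition~\ref{def:analytic.curve}. If $x\in\bB^1_+$, then a direct inspection of the path defined in~\artref{art:paths.in.B1} shows $[x,0]\subset\bB^1(0,|T(x)|)$; choosing $r$ with $|T(x)|<r<1$ and setting $C=\bB^1(0,r)$, $C'=\bB^1_+$, $\psi$ the inclusion and $\phi'=\id_{\bB^1_+}$ realizes $[x,0]$ inside an overconvergent analytic curve in $\bB^1_+$ (since $C'$ is boundaryless, $\Int(C')=C'$).

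For the inductive step $n\geq 2$, the recursion reads $[x,0] = [x,\sigma(\phi(x))]\cup\sigma([\phi(x),0])$ with $\phi\colon\bB^n\to\bB^{n-1}$ the first projection and $\sigma\colon\bB^{n-1}\to\bB^n$ the section $y\mapsto(y,0)$, which is a genuine morphism of $k$-analytic spaces. By the inductive hypothesis, $[\phi(x),0]$ lies in a connected analytic curve $\Gamma_{n-1}\subset\bB^{n-1}$, so $\sigma(\Gamma_{n-1})$ is a connected analytic curve in $\bB^n$ by Lemma~\ref{lem:analytic.curve.properties}(\ref{acp.image}). The other piece $[x,\sigma(\phi(x))]$ lies in the fiber $\phi\inv(\phi(x))\cong\bB^1_{\sH(\phi(x))}$, whose inclusion into $\bB^n$ is a connected analytic curve by~\artref{art:paths.in.Y}. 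Their union contains $[x,0]$, is an analytic curve by Lemma~\ref{lem:analytic.curve.properties}(\ref{acp.union}), and is connected because both pieces contain $\sigma(\phi(x))$.

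For the overconvergent assertion, assume $x\in\bB^n_+$, so $\phi(x)\in\bB^{n-1}_+$ and $|T_n(x)|<1$. The section $\sigma$ restricts to a morphism $\bB^{n-1}_+\to\bB^n_+$, so by induction applied to $\bB^{n-1}_+$, $\sigma([\phi(x),0])$ sits inside an overconvergent curve $\sigma(\Gamma'_{n-1})$ in $\bB^n_+$, still overconvergent by Lemma~\ref{lem:analytic.curve.properties}(\ref{acp.image}). For the fiber piece, I will set $k'=\sH(\phi(x))$, choose $r$ with $|T_n(x)|<r<1$, and take $C=\bB^1(0,r)$, $C'=\bB^1_+$ both over $k'$, with $\psi\colon C\inject C'=\Int(C')$, and $\phi'\colon C'\to(\bB^n_+)_{k'}$ the $k'$-morphism $t\mapsto(\tau,t)$, where $\tau\in(\bB^{n-1}_+)_{k'}$ is the canonical diagonal $k'$-point above $\phi(x)$. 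The image $\pi_{k'/k}\circ\phi'\circ\psi(C)$ lies inside $\phi\inv(\phi(x))\cap\bB^n_+$ and contains $[x,\sigma(\phi(x))]$, because the Berkovich path from $x$ to $0$ in the fiber stays within the closed disc of radius $|T_n(x)|<r$. Taking the union with $\sigma(\Gamma'_{n-1})$ via Lemma~\ref{lem:analytic.curve.properties}(\ref{acp.union}) produces the desired connected overconvergent analytic curve in $\bB^n_+$. The only real obstacle is the overconvergence in the fiber direction: it is purchased by shrinking the ambient closed disc $\bB^1_{k'}$ down to $\bB^1(0,r)$, so that the Shilov-boundary point $\eta_1$ is removed and one sits inside the boundaryless space $\bB^1_+$; once this is arranged, constructing the tautological $k'$-morphism $\phi'$ and gluing the two pieces are routine bookkeeping.
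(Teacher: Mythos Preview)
Your proof is correct and follows essentially the same inductive strategy as the paper's proof: the same base case, the same decomposition $[x,0]=[x,\sigma(\phi(x))]\cup\sigma([\phi(x),0])$, and the same use of Lemma~\ref{lem:analytic.curve.properties}(\ref{acp.image},\ref{acp.union}) together with~\artref{art:paths.in.Y}. The only cosmetic difference is your choice of $C'$ in the overconvergent case---you take the boundaryless open ball $\bB^1_+$ (so that $\Int(C')=C'$ trivially), whereas the paper takes a slightly larger closed disc $\bB^1(0,\rho')$ with $\rho<\rho'<1$; both choices are valid.
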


\begin{proof}
  We prove both assertions by induction on $n$.  When $n=1$, the path $[x,0]$ is contained in the connected analytic curve $\bB^1$.  If $x\in\bB^1_+$ then there exists $\rho < 1$ such that $[x,0]\subset\bB^1(0,\rho)$.  We have $\bB^1(0,\rho)\subset\Int(\bB^1(0,\rho'))\subset\bB^1_+$ for any $\rho'\in(\rho,1)$, so that the connected analytic curve $\bB^1(0,\rho)$ in $\bB^1_+$ is overconvergent.

  Suppose now that the assertion is true for $\bB^{n-1}$.  Let $x\in\bB^n$, and define $\phi\colon\bB^n\to\bB^{n-1}$ and $\sigma\colon\bB^{n-1}\to\bB^n$ as in~\ref{art:paths.in.Bn}.  Let $y = \phi(x)$, and let $\Gamma_y$ be a connected analytic curve in $\bB^{n-1}$ containing $[y,0]$.  Since $\sigma$ is a morphism, Lemma~\ref{lem:analytic.curve.properties}(\ref{acp.image}) implies that $\sigma(\Gamma_y)$ is a connected analytic curve in $\bB^n$.  The fiber $\phi\inv(y)\cong\bB^1_{\sH(y)}$ is a connected analytic curve in $\bB^n$ by~\ref{art:paths.in.Y}, and $\phi\inv(y)\cup\sigma(\Gamma_y)$ is an analytic curve in $\bB^n$ by Lemma~\ref{lem:analytic.curve.properties}(\ref{acp.union}), which is connected because $\sigma(y)\in\phi\inv(y)\cap\sigma(\Gamma_y)$.  By definition $[x,0] = [x,\sigma(y)]\cup\sigma([y,0])$, which is contained in $\phi\inv(y)\cup\sigma(\Gamma_y)$ because $[x,\sigma(y)]\subset\phi\inv(y)$ and $[y,0]\subset\Gamma_y$.

  Now suppose that $x\in\bB^n_+$.  Then $y = \phi(x)\in\bB^{n-1}_+$, so by induction we can assume that $\Gamma_y$ is overconvergent and contained in $\bB^{n-1}_+$.  Since $x\in\bB^n_+$ we have $x\in(\bB^1_+)_{\sH(y)}\subset\bB^1_{\sH(y)} = \phi\inv(y)$, so as in the base case, we have that $[x,\sigma(y)]$ is contained in an overconvergent, connected analytic curve in $\bB^n_+$ of the form $\bB^1_{\sH(y)}(0,\rho)\subset\phi\inv(y)\cap\bB^n_+$ for $\rho < 1$.  Then $[x,0]$ is contained in $\bB^1_{\sH(y)}(0,\rho)\cup\sigma(\Gamma_y)$, which is a connected, overconvergent curve in $\bB^n_+$ by Lemma~\ref{lem:analytic.curve.properties}(\ref{acp.union},\ref{acp.image}).
\end{proof}

Our paths $[x,0]$ satisfy the following crucial locality lemma, which is analogous to \cite[Lemma~3.2.2(ii)]{berkovic90:analytic_geometry} in Berkovich's setting.

\begin{lem}\label{lem:path.locality}
  Let $y\in\bB^n$.  For any open neighborhood $U$ of $[y,0]\subset\bB^n$, there exists an open neighborhood $V$ of $y$ such that $[y',0]\subset U$ for all $y'\in V$.
\end{lem}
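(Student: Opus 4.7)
The plan is to prove the lemma by induction on $n$, mirroring the recursive construction of $[y, 0]$ in \artref{art:paths.in.Bn}. For the base case $n = 1$, $[y, 0]$ is the geodesic from $y$ to $0$ in the Berkovich tree $\bB^1$. If $y = 0$ then $[y, 0] = \{0\}$, and any basic neighborhood $V = \bB_+^1(0, r) \subseteq U$ works: for $y' \in V$ we have $[y', 0] \subseteq \bB^1(0, |T(y')|) \subseteq \bB_+^1(0, r) \subseteq U$. If $y \neq 0$, let $\rho = |T(y)|$ and decompose $[y, 0] = [y, \eta_{0, \rho}] \cup [\eta_{0, \rho}, 0]$ into a ``branch'' and a ``trunk''. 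The trunk is a compact segment that varies continuously with $\rho$, and the branch $[y', \eta_{0, |T(y')|}]$ deforms continuously with $y'$ by the tree structure on $\bB^1$; together these yield the required~$V$.

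For the inductive step, assume the result on $\bB^{n-1}$, let $y \in \bB^n$, write $w = \phi(y)$, and suppose $U \supseteq [y, 0]$ is open. By \artref{art:paths.in.Bn}, $[y, 0] = [y, \sigma(w)] \cup \sigma([w, 0])$. The inductive hypothesis applied to the open set $\sigma\inv(U) \supseteq [w, 0]$ in $\bB^{n-1}$ yields an open neighborhood $W$ of $w$ such that $\sigma([w', 0]) \subseteq U$ for every $w' \in W$. It then suffices to produce an open neighborhood $V_1$ of $y$ in $\bB^n$ with $[y', \sigma(\phi(y'))] \subseteq U$ for every $y' \in V_1$; setting $V = V_1 \cap \phi\inv(W)$ gives the required neighborhood.

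This remaining fiber claim is a parametric version of the base case: $[y', \sigma(\phi(y'))]$ lies in $\phi\inv(\phi(y')) \cong \bB^1_{\sH(\phi(y'))}$ and is governed entirely by the second-factor coordinate $T_n$. The fiber ``trunk'' from $\eta_{0, |T_n(y')|}$ down to $\sigma(\phi(y'))$ varies continuously with $\phi(y')$ because its defining seminorms are continuous in the coordinates on $\bB^{n-1}$, and the fiber ``branch'' from $y'$ up to $\eta_{0, |T_n(y')|}$ varies continuously with $y'$ as in the base case. Both continuity statements use that the Berkovich topology on $\bB^n$ is generated by absolute values of analytic functions, which allows the base-case construction to be carried out uniformly in the base point.

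The main obstacle is this parametric fiber estimate: the tree structure on $\bB^1$ gives a clean description of $[y,0]$, but when the tree varies with a base parameter one must argue uniformly using the analytic structure rather than a metric. This is the analog in our setting of Berkovich's local estimate \cite[Lemma~3.2.2(ii)]{berkovic90:analytic_geometry} for paths to the Gauss point, and once it is established the remainder of the induction is entirely formal.
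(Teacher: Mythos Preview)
Your inductive reduction is exactly the one in the paper, and you have correctly isolated the heart of the matter: the parametric fiber estimate asserting that $[y',\sigma(\phi(y'))]\subset U$ for all $y'$ in a neighborhood of $y$.  However, your proposal does not actually prove this step.  The phrases ``varies continuously with $\phi(y')$'' and ``varies continuously with $y'$'' are not meaningful as stated: the paths $[y,\sigma(\phi(y))]$ and $[y',\sigma(\phi(y'))]$ live in \emph{different} fibers of $\phi$, so there is no ambient tree in which to compare them, and ``continuity'' of a set-valued map requires a precise formulation you do not supply.  Citing Berkovich's Lemma~3.2.2(ii) does not close the gap either, since that result concerns the Gauss-point section rather than the zero section; the paper explicitly adapts that argument and must rework the details.

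What is missing is the mechanism for promoting a fiber condition to an open condition on all of $\bB^n$.  The paper does this by first describing $[y,0_x]$ inside the fiber $\phi\inv(x)$ via \emph{global} inequalities of the form $|T(z)|\leq|T(y)|$ and $|f(z)|\geq\min\{|f(y)|,|f(0_x)|\}$ for $f\in\sA\angles T$; then covering $[y,0_x]$ by a single standard open set $B_+\setminus\bigcup_i B_i$ in $\bB^1_{\sH(x)}$; expressing each removed ball $B_i$ as $\{|f_i|\leq\rho_i\}$ for a minimal polynomial $f_i$, which by density may be taken in $\sA[T]$ so that $|f_i|$ is defined on all of $Y$; and finally running a compactness argument on a closed thickening $A'$ to produce an honest open set in $Y$, cut out by finitely many strict inequalities in $|T|$ and the $|f_i|$, that contains $[y',0_{x'}]$ for every nearby $y'$.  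This passage from fiberwise data to finitely many analytic inequalities on the total space is the actual content of the lemma, and it is absent from your sketch.
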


\begin{proof}
  Let $X = \bB^{n-1}$, let $Y = \bB^n = X\times\bB^1$, let $\phi\colon Y\to X$ be the projection, and let $\sigma\colon X \cong X\times\{0\}\inject Y$ be the section, as defined in~\ref{art:paths.in.Bn}.  By induction on $n$, it suffices to prove the following statement: for every $y\in Y$ and every open neighborhood $U$ of $[y,\sigma(\phi(y))]$, there is an open neighborhood $V$ of $y$ such that $[y',\sigma(\phi(y'))]\subset U$ for all $y'\in V$.

  We adapt the proof of~\cite[Lemma~3.2.2(ii)]{berkovic90:analytic_geometry} and fill in some details.  Let $\sA = k\angles{T_1,\ldots,T_{n-1}}$, so $X = \sM(\sA)$ and $Y = \sM(\sA\angles T)$.  Let $y\in Y$ and let $x = \phi(y)$.  We have $\phi\inv(x) = \sM(\sH(x)\angles T) = \bB^1_{\sH(x)}$.
  For the rest of this proof we use $0_x$ to denote the origin of this ball, i.e., $0_x=\sigma(x)$.
  The smallest closed ball in $\bB^1_{\sH(x)}$ containing $y$ and $0_x$ is $\bB^1(0_x,|T(y)|)$; let $m(y,0_x)$ be its maximal point.  We have the equalities
  \begin{equation}\label{eq:paths.as.sets}
    \begin{aligned}
      \begin{split}
        [y, m(y,0_x)] &= \bigl\{ z\in\phi\inv(x)\bigm|  |T(z)|\leq|T(y)| \text{ and } |f(z)|\geq|f(y)| \text{ for all }f\in\sA\angles T \bigr\} \\
        [m(y,0_x), 0_x] &= \bigl\{ z\in\phi\inv(x)\bigm| |T(z)|\leq|T(y)| \text{ and } |f(z)|\geq|f(0_x)| \text{ for all }f\in\sA\angles T \bigr\}
      \end{split}
    \end{aligned}
  \end{equation}
  as in the proof of~\cite[Lemma~3.2.2(ii)]{berkovic90:analytic_geometry}, and $[y,0_x] = [y,m(y,0_x)]\cup[m(y,0_x),0_x]$ by definition.

Let $U$ be an open neighborhood of $[y,0_x]$ in $Y$, so $U_x = U\cap\phi\inv(x)$ is an open neighborhood of $[y,0_x]$ in $\bB^1_{\sH(x)}$.  The ball $\bB^1_{\sH(x)}$ has a basis of so-called \defi{standard} open sets~\cite[\S4.2]{berkovic90:analytic_geometry} of the form $B_+\setminus\bigcup_{i=1}^m B_i$, where $B_+$ is an open ball in $\bB^1_{\sH(x)}$ with a rig-point as a center and positive radius (or is all of $\bB^1_{\sH(x)}$) and the $B_i$ are disjoint closed balls in $\bB^1_{\sH(x)}$ contained in $B_+$.  This is made explicit in~\ref{topology of unit disc}, where we also see that the union of two standard open subsets  is a standard open subset.
Since $[y,0_x]$ is a connected set contained in a union of finitely many standard open sets inside $U_x$, we see that $[y,0_x]$ has a standard open neighborhood $B_+\setminus\bigcup_{i=1}^m B_i$ contained in $U_x$.  Any open ball containing $0_x$ has the form $\bB^1_+(0_x,\rho) = \{z\mid|T(z)|<\rho\}$, so we have $B_+=\bB^1_+(0_x,\rho)$ for suitable $\rho>0$ (or $B_+=\bB^1_{\sH(x)}$). Since $B_+$ contains $m(y,0_x)$, we have $\rho>|T(y)|$.
Let $f_i\in\sH(x)[T]$ be the minimal polynomial of a rig-center of $B_i$.  By Lemma~\ref{ball described by minimal polynomial}, we have $B_i=\{z \in \bB_{\sH(x)}^1\mid|f_i(z)|\leq\rho_i\}$ for suitable $\rho_i>0$.
  Since $B_i$ does not contain $0_x$ or $y$, we have $\rho_i < \min\{|f_i(y)|,|f_i(0_x)|\}$.  By a density argument, we may assume $f_i\in\sA[T]$.  For $\epsilon >0$,  let
  \[
    \begin{split}
      U' &= \bigl\{ z\in Y\bigm||T(z)| < |T(y)|+\epsilon \text{ and } \forall i\,|f_i(z)| > \min\{|f_i(y)|,\,|f_i(0_x)|\}-\epsilon \bigr\} \\
      A' &= \bigl\{ z\in Y\bigm||T(z)| \leq |T(y)|+\epsilon \text{ and } \forall i\,|f_i(z)| \geq \min\{|f_i(y)|,\,|f_i(0_x)|\}-\epsilon\bigr\}.
    \end{split}
  \]
  Then $A'$ is a compact set containing $U'$.  Using \eqref{eq:paths.as.sets}, we deduce that  $U'\cap\phi\inv(x)$ is an open neighborhood of $[y,0_x]$ in $\phi\inv(x)$, which is contained in $B_+\setminus\bigcup_{i=1}^m B_i$ and hence in
  $U$ for sufficiently small $\epsilon$.  Decreasing $\epsilon$ if necessary, we may even assume  that $A'\cap\phi\inv(x) \subset U$.  Then the compact set $A'\cap(Y\setminus U)$ is disjoint from $\phi\inv(x)$, so there exists an open neighborhood $W$ of $x$ such that $U'\cap\phi\inv(W)\subset U$.  Shrinking $W$ if necessary, we may assume by continuity of $\sigma$ and using $0_x=\sigma(x)$ that
  \begin{equation}\label{eq:nbhd.of.section}
    |f_i(\sigma(x'))| > |f_i(0_x)| - \epsilon \quad\text{for all } x'\in W \text{ and all } i = 1,\ldots,m.
  \end{equation}
  Replacing $U$ by $U'\cap\phi\inv(W)$, we may assume
  \[
    U = \bigl\{ z\in Y\bigm||T(z)| < |T(y)|+\epsilon \text{ and } \forall i\,|f_i(z)| > \min\{|f_i(y)|,\,|f_i(0_x)|\}-\epsilon  \bigr\} \cap\phi\inv(W).
  \]

  Let $y'\in U$ and $x' = \phi(y')\in W$, and let $0_{x'} = \sigma(x')$.
  Using~\eqref{eq:paths.as.sets} with $y'$ in place of $y$, we have
  $[y',0_{x'}] \subset U$ because for $z\in[y',0_{x'}]$ we have $|T(z)|\leq|T(y')|<|T(y)|+\epsilon$, and $|f_i(z)|\geq \min\{|f_i(y')|,|f_i(0_{x'})|\}>\min\{|f_i(y)|,|f_i(0_x)|\}-\epsilon$ for each $i$; this uses~\eqref{eq:nbhd.of.section} for the second inequality. This proves the lemma.
\end{proof}

The proof of Theorem~\ref{thm:conn.by.curves} requires the following lemma, which is an immediate consequence of Lemma~\ref{lem:analytic.curve.properties}(\ref{acp.image},\ref{acp.scalars}).  We say that a \emph{subset $\Sigma\subset X$ is connected by curves} if for every pair of points $x,y\in\Sigma$, there exists a connected analytic curve $\Gamma$ in $X$ contained in $\Sigma$ such that $x,y\in\Gamma$, and likewise for overconvergent curves.

\begin{lem}\label{lem:img.of.curve.connected}
  Let $X$ be a $k$-analytic space, let $k'/k$ be an analytic field extension, let $Y$ be a $k'$-analytic space, and let $f\colon Y\to X_{k'}$ be a morphism.  If $Y$ is connected by curves then $\pi_{k'/k}\circ f(Y)$ is connected by curves, and likewise for overconvergent curves.
\end{lem}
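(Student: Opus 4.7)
The plan is to translate the two given points in $\pi_{k'/k}\circ f(Y)$ back into $Y$, connect them there by an analytic curve, and then transport that curve through $f$ and $\pi_{k'/k}$ using the functorial properties already established for analytic curves.

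More precisely, let $\Sigma = \pi_{k'/k}\circ f(Y)$, and pick $x_1,x_2\in\Sigma$. By definition of $\Sigma$, there exist $y_1,y_2\in Y$ with $\pi_{k'/k}\circ f(y_i) = x_i$ for $i=1,2$. Since $Y$ is connected by curves, I can choose a connected analytic curve $\Gamma\subset Y$ containing $y_1$ and $y_2$. Applying Lemma~\ref{lem:analytic.curve.properties}(\ref{acp.image}) to the morphism $f\colon Y\to X_{k'}$, the image $f(\Gamma)$ is an analytic curve in $X_{k'}$. Applying Lemma~\ref{lem:analytic.curve.properties}(\ref{acp.scalars}) to this curve, $\pi_{k'/k}(f(\Gamma))$ is an analytic curve in $X$. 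It is connected as a continuous image of the connected set $\Gamma$, and visibly contains $x_1$ and $x_2$; moreover it is contained in $\Sigma$ by construction. This shows $\Sigma$ is connected by curves.

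For the overconvergent version, the argument is identical: if the curve $\Gamma\subset Y$ produced by the hypothesis is overconvergent, then both Lemma~\ref{lem:analytic.curve.properties}(\ref{acp.image}) and~(\ref{acp.scalars}) preserve overconvergence (this is the final clause of Lemma~\ref{lem:analytic.curve.properties}), so $\pi_{k'/k}(f(\Gamma))$ is an overconvergent connected analytic curve in $X$ containing $x_1,x_2$ and contained in $\Sigma$.

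There is no real obstacle here; the entire content is packaged in Lemma~\ref{lem:analytic.curve.properties}, and the proof amounts to chaining parts~(\ref{acp.image}) and~(\ref{acp.scalars}) and noting that both connectedness and overconvergence are preserved by these operations.
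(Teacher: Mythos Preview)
Your proof is correct and matches the paper's approach exactly: the paper states that the lemma is an immediate consequence of Lemma~\ref{lem:analytic.curve.properties}(\ref{acp.image},\ref{acp.scalars}), and you have simply written out this immediate consequence in detail.
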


\begin{proof}[Proof of Theorem~\ref{thm:conn.by.curves}]
  Let $X$ be a connected $k$-analytic space.  Define a relation $\sim$ on $X$ by $x\sim y$ if there exists a connected analytic curve $\Gamma$ in $X$ such that $x,y\in\Gamma$.   This is an equivalence relation by Lemma~\ref{lem:analytic.curve.properties}(\ref{acp.union}), so since $X$ is connected, it suffices to show that equivalence classes are open.  It is enough to prove that every point $x\in X$ admits a (not necessarily open) neighborhood that is connected by  curves.  This is a property that we can prove for any $k$-analytic space, so we no longer assume that $X$ is connected. By the definition of a $k$-analytic space in \cite[\S 1.2]{berkovic93:etale_cohomology}, the point $x$ has a neighborhood of the form $X_1\cup\cdots\cup X_n$, where each $X_i$ is an affinoid domain in $X$ that contains $x$.  If $U_i$ is a neighborhood of $x$ in $X_i$ then $\bigcup_{i=1}^n U_i$ is a neighborhood of $x$ in $X$, so it suffices to show that $x$ has a neighborhood in each $X_i$ that is connected by curves. We may thus assume that $X$ is affinoid.  There exists an analytic  extension field $k'/k$ such that $X_{k'}$ is strictly $k'$-affinoid and such that $\pi_{k'/k}\colon X_{k'}\to X$ admits a continuous section $\sigma\colon X\to X_{k'}$: see~\cite[Lemma 3.2.2(i)]{berkovic90:analytic_geometry}.  If $U\subset X_{k'}$ is a neighborhood of $\sigma(x)$ that is connected by curves then Lemma~\ref{lem:img.of.curve.connected} implies that $\pi_{k'/k}(U)$ is connected by curves; this is a neighborhood of $x$ because $\sigma\inv(U)\subset\pi_{k'/k}(U)$.  Hence we may assume that $X$ is strictly $k$-affinoid.  Then $X$ has finitely many irreducible components; as above, it is enough to show that $x$ has a neighborhood in each irreducible component that is connected by curves, so we may assume that $X$ is irreducible.

  Now we will prove that any irreducible, strictly $k$-affinoid space $X$ is connected by curves.  We will show that for every $x\in X$, there is a neighborhood $U$ of $x$ such that $x\sim x'$ for all $x'\in U$.  This is not the same thing as showing that $U$ is connected by curves, as the curve connecting $x$ to $x'$ may leave $U$.  However, it does imply that the $\sim$-equivalence classes are open, which is enough because $X$ is connected.

  By the Noether normalization lemma \cite[Corollary 6.1.2/2]{bosch_guntzer_remmert84:non_archimed_analysis}, \cite[Corollary 2.1.16]{berkovic90:analytic_geometry}, there exists a finite, surjective morphism $\phi\colon X\to\bB^n$ for $n=\dim(X)$.  For $x'\in X$ let $y' = \phi(x')$, and let $\Sigma_{x'}$ be the connected component of $\phi\inv([y',0])$ containing $x'$.  Berkovich shows in the proof of~\cite[Lemma~3.2.5]{berkovic90:analytic_geometry} (which applies in our situation by Lemma~\ref{lem:path.locality}) that there is an open neighborhood $U$ of $x$ such that for all $x'\in U$, the sets $\Sigma_x$ and $\Sigma_{x'}$ contain a common point of $\phi\inv(0)$.

  By Lemma~\ref{lem:paths.in.curves}, there are connected analytic curves $\Gamma_y,\Gamma_{y'}$ in $\bB^n$ such that $[y,0]\subset\Gamma_y$ and $[y',0]\subset\Gamma_{y'}$.  Let $\Gamma_x$ (resp.\ $\Gamma_{x'}$) be the connected component of $\phi\inv(\Gamma_y)$ (resp.\ $\phi\inv(\Gamma_{y'})$) containing $x$ (resp.\ $x'$).  These are connected analytic curves in $X$ by Lemma~\ref{lem:analytic.curve.properties}(\ref{acp.components},\ref{acp.pullback}).  We have $\Sigma_x\subset\Gamma_x$ and $\Sigma_{x'}\subset\Gamma_{x'}$, so that $\Gamma_x\cap\Gamma_{x'}\neq\emptyset$.  Hence $x$ and $x'$ are contained in the connected analytic curve $\Gamma_x\cup\Gamma_{x'}$ in $X$.  Since $x'\in U$ was arbitrary, this completes the proof that a connected $k$-analytic space is connected by curves.

  \smallskip
  Now suppose that $X$ is a connected and boundaryless $k$-analytic space.  Define an equivalence relation $\sim'$ on $X$ by $x\sim'y$ if there exists a connected, overconvergent analytic curve $\Gamma$ in $X$ such that $x,y\in\Gamma$.   As before, we will prove that every point $x\in X$ admits a neighborhood that is connected by overconvergent curves, since this implies that the $\sim'$-equivalence classes are open.  By the definition the relative interior (\cite[\S 1.5]{berkovic90:analytic_geometry}, \cite[\S 4.2.4]{temkin15:berkovich_intro}), any point $x\in X$ admits an affinoid neighborhood $Y$.  Note that $x \in\Int(Y)$.  Let $k'/k$ be an analytic extension field such that $Y_{k'}$ is strictly $k'$-analytic and $\pi_{k'/k}\colon Y_{k'}\to Y$ admits a continuous section $\sigma\colon Y\to Y_{k'}$, as before.  We have $\pi_{k'/k}\inv(\Int(Y))\subset\Int(Y_{k'})$ by~\artref{relative boundary}\eqref{boundary and base change}, so $\sigma(x)\in\Int(Y_{k'})$.  As in the previous case, it is enough to find a neighborhood of $\sigma(x)$ in $Y_{k'}$ that is connected by overconvergent curves.  Hence we may assume that $Y$ is strictly $k$-affinoid and $x\in\Int(Y)$. Let $k'/k$ be the completion of an algebraic closure of $k$.  Then $Y_{k'}\to Y$ is open by~\cite[Corollary~1.3.6]{berkovic90:analytic_geometry}, so it suffices to find a neighborhood of any preimage of $x$ in $\Int(Y_{k'})$ that is connected by overconvergent curves by Lemma~\ref{lem:img.of.curve.connected}.  We may therefore assume that $k$ is algebraically closed. Let $Y_1,\ldots,Y_n$ be the irreducible components of $Y$ containing $x$.  Then $x\in\Int(Y_i)$ for each $i$; it is enough to show that $x$ admits a neighborhood in each $Y_i$ that is connected by overconvergent curves, so we may assume that $Y$ is irreducible and $x\in\Int(Y)$. Let $V$ be the connected component of $x$ in $\Int(Y)$.

  We will prove that $V$ is connected by overconvergent curves.  We proceed with an argument similar to the previous case: that is, for any $x\in V$, we will show that $x$ and $x'$ are connected by an overconvergent curve in $V$ for all $x'$ in a neighborhood of $x$.  Choose a finite, surjective morphism $\phi\colon Y\to\bB^n$.  The interior of $\bB^n$ is the (disjoint) union of all residue balls $\bB^n_+(z,1)$  by~\cite[Lemme~6.5.1]{chambert_ducros12:forms_courants} (all residue balls have $k$-rational centers $z$ since $k=\bar k$), and $\phi\inv(\Int(\bB^n))=\Int(Y)$ by finiteness of $\varphi$ (use \artref{relative boundary}\eqref{finite and boundary} and \eqref{transitivity of boundaries}); thus, after recentering, we can assume that $\phi(V)\subset\bB^n_+$.  For $x'\in V$ let $y' = \phi(x')\in\bB^n_+$ and let $\Sigma_{x'}$ be the connected component of $\phi\inv([y',0])$ containing $x'$.  Since $[y',0]\subset\bB^n_+$ we have $\phi\inv([y',0])\subset\Int(Y)$, and since $\Sigma_{x'}$ is connected and $x'\in V$ we have $\Sigma_{x'}\subset V$.  As before, there exists an open neighborhood $U$ of $x$ in $V$ such that $\Sigma_{x'}\cap\Sigma_x\neq\emptyset$ for all $x'\in U$.

  By Lemma~\ref{lem:paths.in.curves}, for any $y'\in\bB^n_+$ there is a connected, overconvergent analytic curve $\Gamma_{y'}$ in $\bB^n_+$ containing $[y',0]$.  For $x'\in V$ mapping to $y'$, we let $\Gamma_{x'}$ be the connected component of $\phi\inv(\Gamma_{y'})$ containing $x'$, so that $\Gamma_{x'}\subset V$ for the same reason that $\Sigma_{x'}\subset V$.  By Lemma~\ref{lem:analytic.curve.properties}(\ref{acp.components},\ref{acp.pullback}), this $\Gamma_{x'}$ is again a connected, overconvergent analytic curve in $X$.  Clearly $\Sigma_{x'}\subset\Gamma_{x'}$, so for $x'\in U$ we have $\Gamma_{x'}\cap\Gamma_x\neq\emptyset$.  This completes the proof as in the previous case.
\end{proof}

\section{The maximum principles for classically psh functions} \label{section: maximum principle for classically functions}

In this section, we consider a non-Archimedean field $k$. We will prove first that the local maximum principle holds for classically psh functions, and then we will show a global maximum principle in the affinoid case; more specifically, that that a classically psh function takes its global maximum at a Shilov point.

\begin{thm}[Local Maximum Principle] \label{thm: maximum principle for classically psh functions}
	Let $X$ be a $k$-analytic space   and let $u$ be a classically psh  function on $X$. If $u$ has a local maximum at $x \in \Int(X)$, then $u$ is locally constant at $x$.
\end{thm}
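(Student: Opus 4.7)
The plan is to reduce the local maximum principle for classically psh functions to the local maximum principle for subharmonic functions on analytic curves (Proposition~\ref{prop:subharmonic.properties}(\ref{shp.maximum})) using connectivity by overconvergent curves (Theorem~\ref{thm:conn.by.curves}). First, since $x\in\Int(X)$, property~\artref{relative boundary}\ref{local on source} lets us replace $X$ by a boundaryless open neighborhood of $x$. Such $X$ is good, so I pick an affinoid neighborhood $Y$ of $x$ inside an open set $W$ on which $h\le h(x)$. Letting $k'$ be the completion of an algebraic closure of $k$, choosing a preimage $x'$ of $x$ in $Y_{k'}$, restricting to an irreducible component $Y'$ through $x'$, and setting $V'$ to be the connected component of $x'$ in $\Int(Y')$, the proof of Theorem~\ref{thm:conn.by.curves} shows that $V'$ is connected by overconvergent curves in $V'$ itself. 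Since $\pi_{k'/k}(V')\subset Y\subset W$ and $\pi_{k'/k}$ is open by~\cite[Corollary~1.3.6]{berkovic90:analytic_geometry}, $\pi_{k'/k}(V')$ is an open neighborhood of $x$ in $X$; it suffices to show $h\equiv h(x)$ on this set.

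Fix an arbitrary $y'\in V'$, and choose a connected overconvergent analytic curve $\Gamma'\subset V'$ through $x'$ and $y'$ with data $(k''/k',\,D,\,D',\,\phi,\,\phi',\,\psi)$ as in Definition~\ref{def:analytic.curve}, so $\phi=\phi'\circ\psi$, $\psi\colon D\to\Int(D')$, and $\Gamma'=\pi_{k''/k'}\circ\phi(D)$. Set $u\coloneqq h\circ\pi_{k''/k}\circ\phi'\colon D'\to\R\cup\{-\infty\}$; this is subharmonic on $D'$ because $h$ is classically psh on $X$ (Definition~\ref{def:classically.psh} applied to the composition $D'\to V'_{k''}\inject X_{k''}$). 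Since the image of $\pi_{k''/k}\circ\phi'$ is contained in $W$, we have $u\le h(x)$ on $D'$, with equality at $\psi(z_{x'})\in\Int(D')$ for any preimage $z_{x'}\in D$ of $x'$.

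Now consider $S\coloneqq\{w\in D'\mid u(w)=h(x)\}$. It is closed in $D'$ by upper semicontinuity of $u$ combined with $u\le h(x)$, and $S\cap\Int(D')$ is open in $\Int(D')$ by the local maximum principle for subharmonic functions on curves. Hence $T\coloneqq\psi^{-1}(S)\subset D$ is both open and closed in the compact space $D$ and contains $z_{x'}$. The compact images $\pi_{k''/k'}\circ\phi(T)$ and $\pi_{k''/k'}\circ\phi(D\setminus T)$ are closed in $\Gamma'$ and are disjoint, because a point of the former has $h\circ\pi_{k'/k}$-value exactly $h(x)$ while a point of the latter does not. Since $\Gamma'$ is connected and contains $x'\in\pi_{k''/k'}\circ\phi(T)$, we conclude $\pi_{k''/k'}\circ\phi(T)=\Gamma'$; in particular $y'\in\Gamma'$ forces $h(\pi_{k'/k}(y'))=h(x)$. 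Varying $y'\in V'$ yields the desired equality $h\equiv h(x)$ on $\pi_{k'/k}(V')$.

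The main obstacle is the bookkeeping in the initial reduction: the overconvergent curves through $x$ must stay in the local-maximum region $W$, which forces us to invoke the local content of the proof of Theorem~\ref{thm:conn.by.curves} (namely, that the specific neighborhood $V'$ constructed there is itself connected by overconvergent curves in $V'$) rather than just its global statement. A secondary technical point is that $D'$ has dimension at most $1$ but need not be of pure dimension~$1$; one restricts to the pure-dimension-$1$ part to apply the curve-level maximum principle, the remaining isolated rig-points contributing nothing.
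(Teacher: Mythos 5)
Your overall strategy is the same as the paper's: reduce to the maximum principle for subharmonic functions on curves (Proposition~\ref{prop:subharmonic.properties}(\ref{shp.maximum})) via connectivity by overconvergent curves, and use compactness of the parametrizing space $D$ to propagate the value $h(x)$ along a whole connected overconvergent curve. Your curve-level argument (the set $T=\psi^{-1}(S)$ being open and closed in $D$, and the disjoint-closed-images trick on the connected curve $\Gamma'$) is a correct, mildly repackaged version of the paper's argument, including the correct handling of zero-dimensional components of $D'$.

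However, there is a genuine gap in your initial reduction. You claim that $\pi_{k'/k}(V')$ is an open neighborhood of $x$ in $X$, citing openness of $\pi_{k'/k}\colon Y_{k'}\to Y$. But $V'$ is open only in the single irreducible component $Y'$ of $Y_{k'}$, which is a Zariski-closed subset of $Y_{k'}$; openness of $Y_{k'}\to Y$ says nothing about the restriction to $Y'$. Worse, if $Y_{k'}$ (or $Y$) has several irreducible components through $x'$ (resp.\ $x$), then $\pi_{k'/k}(V')$ is contained in the image of one component and cannot be a neighborhood of $x$: it misses the other branches entirely, so showing $h\equiv h(x)$ on it does not prove that $h$ is locally constant at $x$. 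To repair this you would have to take the union over all preimages of $x$ and all components through them, tracking ``neighborhood of $x$ in a subspace'' through each reduction exactly as in the proof of Theorem~\ref{thm:conn.by.curves}. The paper avoids all of this, and your stated ``main obstacle'' is illusory: since classically psh functions form a sheaf and analytic spaces are locally connected, one may simply replace $X$ at the outset by a connected, Hausdorff open neighborhood of $x$ contained in $W\cap\Int(X)$ (boundaryless by \ref{relative boundary}(\ref{relation to topological boundary},\ref{transitivity of boundaries})), on which the local maximum becomes a global one. Then the \emph{statement} of Theorem~\ref{thm:conn.by.curves} applies directly to this shrunken $X$, every connected overconvergent curve through $x$ automatically lies in the region where $h\le h(x)$, and no appeal to the internals of that proof is needed.
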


\begin{proof}
  We may replace $X$ by a connected, Hausdorff open neighborhood of $x$ in $\Int(X)$ to assume that $X$ is connected and boundaryless and that $u$ attains a global maximum at~$x$.  Let $R = u(x)$.  Let $\Gamma$ be a connected, overconvergent analytic curve  in $X$ containing $x$.  We claim that $u\equiv R$ on $\Gamma$.  Consider the set $\Gamma_{u=R} = \{z\in\Gamma\mid u(z)=R\}$.  This set is closed in $\Gamma$ by upper semicontinuity of $u$, so since $\Gamma$ is connected, it suffices to show that $\Gamma_{u=R}$ is open in~$\Gamma$.

By~\artref{art:larger.extension}, there exist an analytic  extension field $k'/k$, a compact, separated strictly $k'$-analytic space $C$ of dimension at most $1$, a separated strictly $k'$-analytic space $C'$ of dimension at most $1$, and morphisms $\psi\colon C\to C'$ and $\phi'\colon C'\to X_{k'}$, such that $\psi(C)\subset\Int(C')$ and $\pi_{k'/k}\circ\phi(C) = \Gamma$, where $\phi = \phi'\circ\psi$.  The composition $u\circ\pi_{k'/k}\circ\phi'$ is subharmonic on $1$-dimensional connected components of $C'$ because $u$ is classically psh.

Let $y\in\Gamma_{u=R}$, let $z\in C$ be a point lying over $y$, and let $z' = \psi(z)\in\Int(C')$, so that $R = u\circ\pi_{k'/k}\circ\phi'(z') = u\circ\pi_{k'/k}\circ\phi(z)$.  If $\{z'\}$ is a connected component of $C'$ then clearly $u\circ\pi_{k'/k}\circ\phi'\equiv R$ in a neighborhood of $z'$.  Otherwise, the connected component of $z'$ in $C'$ is a strictly $k'$-analytic curve, in which case $u\circ\pi_{k'/k}\circ\phi'\equiv R$ in a neighborhood of $z'$ by Proposition~\ref{prop:subharmonic.properties}(\ref{shp.maximum}) (here we use $z'\in\Int(C')$).  Taking the $\psi$-inverse image of such a neighborhood, we see that $u\circ\pi_{k'/k}\circ\phi\equiv R$ in a neighborhood of $z$.  Applying this argument to each $z\in(\pi_{k'/k}\circ\phi)\inv(y)$, we construct an open set $V\subset C$ containing $(\pi_{k'/k}\circ\phi)\inv(y)$ such that $u\circ\pi_{k'/k}\circ\phi|_V\equiv R$.

The complement $C\setminus V$ is a closed subset of the compact space $C$, so it is a compact subset of $C$ that is disjoint from $(\pi_{k'/k} \circ \phi)\inv(y)$.  Its image is a compact subset of $X$ that does not meet $y$, so there is an open neighborhood $W$ of $y$ such that $(\pi_{k'/k}\circ\phi)\inv(W)\subset V$.  It follows that $u\equiv R$ on $W\cap\Gamma$.  This proves that $\Gamma_{u=R}$ is open in~$\Gamma$. As remarked above, this yields $u\equiv R$ on $\Gamma$.

Let $y\in X$ be any point.  By Theorem~\ref{thm:conn.by.curves}, there is a connected, overconvergent analytic curve $\Gamma$ in $X$ containing $x$ and $y$.  We showed above that $u\equiv R$ on $\Gamma$, so that $u(y)=R$.  As $y$ was arbitrary, we have $u\equiv R$ on $X$.
\end{proof}

The global maximum principle for classically psh functions involves the Shilov boundary. For this, we need the following two lemmas.  The first is well-known, but we were unable to find a reference.

\begin{lem} \label{Shilov boundary vs usual boundary}
  Let $X$ be a $k$-affinoid space of pure dimension $d \geq 1$. Then the Shilov boundary of $X$ is contained in the relative boundary $\partial X$ of $X$ over $k$.  In particular, $\del X \neq \emptyset$.
\end{lem}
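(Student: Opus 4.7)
My plan is to reduce to the strictly $k$-affinoid case by extension of scalars, and then to handle that case using Noether normalization together with the canonical reduction.

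For the reduction, I would choose $r \in \R_{>0}^n$ as in \artref{art:shilov.section} so that $k' \coloneqq k_r$ is an analytic extension field of $k$ with $X_{k'}$ strictly $k'$-affinoid. Let $\Gamma$ and $\Gamma'$ denote the Shilov boundaries of $X$ and $X_{k'}$, respectively. Since $\pi_{k'/k}$ is surjective \artref{extension of scalars}, for every $f \in \sA$ we have $\max_X |f| = \max_{X_{k'}} |f \circ \pi_{k'/k}| = \max_{\Gamma'} |f \circ \pi_{k'/k}|$, so $\pi_{k'/k}(\Gamma')$ is a computing set for the sup norms of elements of $\sA$ and hence contains $\Gamma$ by minimality. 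Given the strictly affinoid case that $\Gamma' \subset \del X_{k'}$, Lemma~\ref{lem:base.change.boundaries} then yields $\Gamma \subset \pi_{k'/k}(\del X_{k'}) = \del X$. The last assertion $\del X \neq \emptyset$ follows because the Shilov boundary is non-empty by \cite[Corollary~2.4.5]{berkovic90:analytic_geometry}.

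For the strictly $k$-affinoid case, I would apply Noether normalization \cite[Corollary~6.1.2/2]{bosch_guntzer_remmert84:non_archimed_analysis} to produce a finite, injective homomorphism $k\langle T_1,\ldots,T_d\rangle \hookrightarrow \sA$ and the corresponding finite surjective morphism $\phi\colon X \to \bB^d$. Surjectivity of $\phi$ implies that the sup norm on $\sA$ restricts to the Gauss norm on $k\langle T_1,\ldots,T_d\rangle$, and a standard argument then shows that $\tilde\sA$ is integral over $\tilde k[T_1,\ldots,T_d]$, so the induced morphism $\tilde\phi\colon \Spec\tilde\sA \to \bA^d_{\tilde k}$ is finite and dominant. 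The Shilov boundary of $X$ consists of the preimages under the reduction map of the generic points of $\Spec\tilde\sA$ (see~\cite[Proposition~2.4.4]{berkovic90:analytic_geometry}), and since $\tilde\phi$ sends generic points to generic points, $\phi$ sends any Shilov point of $X$ to the Gauss point $\eta\in\bB^d$, which lies in $\del \bB^d$ because $\Int(\bB^d) = \bB^d_+$ does not contain $\eta$. Since $\phi$ is finite, \artref{relative boundary}(\ref{finite and boundary'},\ref{transitivity of boundaries'}) gives $\del X = \phi^{-1}(\del \bB^d) \supset \phi^{-1}(\eta)$, which contains every Shilov point of $X$.

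The main obstacle I expect is the verification that $\tilde\phi$ is finite and dominant: this requires combining Noether normalization with the comparison between the sup norm on $\sA$ and the Gauss norm on $k\langle T_1,\ldots,T_d\rangle$, which is standard but technical. An alternative route would be to use Temkin's graded reduction theory \artref{graded reduction} to work uniformly in the non-strict case and bypass the initial extension of scalars, characterizing Shilov points as those $x\in X$ with $\operatorname{trdeg}(\widetilde{\sH(x)}^\bullet/\tilde k^\bullet) = d$ and using the graded Abhyankar inequality to conclude that such points lie in $\del X$.
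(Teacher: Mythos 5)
Your reduction to the strictly $k$-affinoid case is correct and matches the paper's (the paper cites \cite[Proposition~2.21]{gubler_rabinoff_werner:tropical_skeletons} for $\Gamma\subset\pi_{k'/k}(\Gamma')$, but your minimality argument works since $\pi_{k'/k}(\Gamma')$ is finite, hence closed). The strictly affinoid case is where your proposal has a genuine gap. You infer from ``$\td\phi\colon\Spec\td\sA\to\Spec\td k[T_1,\ldots,T_d]$ is finite and dominant'' that $\td\phi$ sends generic points to generic points. That implication is false: a finite dominant morphism can collapse an irreducible component of the source onto a proper closed subset of the target (already $\td k[T]\hookrightarrow\td k[T]\times\td k$, $f\mapsto(f,f(0))$, is finite, injective and dominant, yet the component $\Spec\td k$ maps to a closed point). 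What you actually need is that $\Spec\td\sA$ has no irreducible component of dimension $<d$ --- at the very least none of dimension $0$, since a Shilov point lands in $\del X=\phi\inv(\del\bB^d)$ precisely when its reduction is a generic point whose closure is positive-dimensional. This equidimensionality of the (canonical) reduction of a pure-$d$-dimensional strictly affinoid is essentially the entire content of the lemma, and it is exactly what the paper's proof supplies by a different route: it takes a formal affine model $\fX$, uses \cite[Lemme~6.5.1]{chambert_ducros12:forms_courants} to identify $\del X$ with the set of points reducing to non-closed points of the affine special fiber $\fX_s$, uses that $\fX_s$ has pure dimension $d\geq 1$ (so its generic points are not closed), and invokes \cite[Proposition~A.3]{gubler_martin19:zhangs_metrics} to identify the Shilov points with the divisorial points. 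Your sketched ``alternative route'' via graded reductions is not really an alternative: it is the argument the paper carries out in Lemma~\ref{stratification of the boundary'}\eqref{stratification 4'}, and adopting it (or the formal-model argument) is the way to close the gap; Noether normalization by itself does not.

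A secondary slip: $\Int(\bB^d)\neq\bB^d_+$ in general. Already for $d=1$ one has $\del\bB^1=\{\eta_1\}$ by \artref{art:curve.facts}(3), so $\Int(\bB^1)=\bB^1\setminus\{\eta_1\}\supsetneq\bB^1_+$; in general $\Int(\bB^d)$ is the set of points reducing to closed points of $\Spec\td k[T_1,\ldots,T_d]$. Your conclusion $\eta\in\del\bB^d$ is nevertheless correct, since the Gauss point reduces to the generic point, which is not closed for $d\geq 1$. The final step $\del X=\phi\inv(\del\bB^d)$ is fine: equality holds in \eqref{eq:del.morphism} because $\del(X/\bB^d)=\emptyset$ by finiteness and $\bB^d$ is separated.
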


\begin{proof}
  Let $k'/k$ be an analytic field extension such that $X_{k'}$ is strictly affinoid. Note that the base change morphism $\pi_{k'/k}$ maps  $\partial X_{k'}$ to  $\partial X$~\artref{art:relative.interior}\ref{boundary and base change'} and maps the Shilov boundary onto the Shilov boundary \cite[Proposition 2.21]{gubler_rabinoff_werner:tropical_skeletons}, so we may assume that $X$ is strictly affinoid and $k'$ non-trivially valued.
	Choose a formal affine $k^\circ$-model $\fX$ of $X$. The interior $X \setminus \partial X$ consists of the points $x \in X$ such that the closure of $\red_\fX(x)$ in $\fX_s$ is proper over the residue field $\td k$ by \cite[Lemme 6.5.1]{chambert_ducros12:forms_courants}. Since $\fX$ is affine, the special fiber $\fX_s$ is affine, so $x \in \partial X$ if and only if $\red_\fX(x)$ is not a closed point of $\fX_s$. Since $d \geq 1$,  the special fiber $\fX_s$ is of pure dimension $d \geq 1$, and hence the generic points of $\fX_s$ are not closed. By \cite[Proposition A.3]{gubler_martin19:zhangs_metrics}, the divisorial points of $\fX$ (i.e.~the points of $X$ which reduce to the generic points of $\fX_s$) are precisely the Shilov points of $X$. We conclude that all Shilov points are in the boundary of $X$.
\end{proof}

Let $X=\sM(\sA)$ be {an affinoid} space over $k$ of {pure} dimension $d$. By a result of Ducros \cite[Lemme~3.1]{ducros12:squelettes_modeles}, the boundary $\partial X$ of $X$ can be expressed as a union of {affinoid} spaces of lower dimension, defined over certain extension fields of $k$. We use his argument to give a stratification of $\partial X$.  Choose non-nilpotent elements $g_1, \dots, g_r \in \sA$ such that the graded reductions $\td g_1^\bullet, \dots, \td g_r^\bullet$ generate the graded $\td k^\bullet$-algebra $\td{\sA}^\bullet$.
Let $r_i$ be the spectral radius of $g_i$.
Then for any $I \subset \{1,\dots,r\}$, we get a morphism
$$p_I  \colon X \longrightarrow \bB(I),\qquad x \mapsto (g_i(x))_{i \in I}$$
to the closed  poly-disc $\bB(I)=\prod_{i \in I}\bB(0,r_i)$ of dimension $s=|I|$. Let $\eta^{(I)}$ be the weighted Gauss point of $\bB(I)$ and let $k_I \coloneqq \sH(\eta^{(I)})$, which is the completion of the field of rational functions $k((t_i)_{i \in I})$ in the variables $(t_i)_{i \in I}$ with respect to the weighted Gauss valuation for the weight $(r_i)_{i \in I}$.

\begin{lem} \label{stratification of the boundary'}
	With the above notations, set $X_I \coloneqq p_I^{-1}(\eta^{(I)})$. By convention, we set $X_\emptyset=X$.  Then:
	\begin{enumerate}
		\item \label{stratification 1'}
                  $X_I$ is a  $k_I$-affinoid space of pure dimension ${d-s}$ if non-empty.
		\item \label{stratification 2'}
		$\partial X_I = \bigcup_{i \in \{1,\dots,r\} \setminus I} X_{I \cup \{i\}}.$
		\item \label{stratification 3'}
		$X_I=\{x \in X \mid \text{$(\td g_i^\bullet(x))_{i \in I}$ are algebraically independent in $\td \sH^\bullet(x)$ over $\td k^\bullet$}\}.$
		\item \label{stratification 4'}
		If $X$ has pure dimension $d$, then the Shilov boundary of $X$ is equal to $\bigcup_{|I|=d}X_I$.
	\end{enumerate}
\end{lem}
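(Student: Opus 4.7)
The plan is to prove (3) first, deduce (1), then tackle (2), and conclude (4) by dimension bookkeeping.

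For (3), recall that the Gauss point $\eta^{(I)}$ is characterized by the identification $\td k_I^\bullet = \td k^\bullet[(\td T_i^\bullet)_{i \in I}]$ as a free graded polynomial algebra over $\td k^\bullet$. Hence $p_I(x) = \eta^{(I)}$ if and only if the seminorm on $k[T_I]$ induced by $T_i \mapsto g_i(x)$ equals the weighted Gauss norm, equivalently $(\td g_i^\bullet(x))_{i \in I}$ are algebraically independent in $\td \sH(x)^\bullet$ over $\td k^\bullet$. From this description, $X_I = p_I^{-1}(\eta^{(I)}) = \sM\bigl(\sA \hat\otimes_{k\langle r_I^{-1} T_I\rangle} k_I\bigr)$ is $k_I$-affinoid, which settles the structural part of (1). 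For pure dimension $d - |I|$, extend $(\td g_i^\bullet)_{i \in I}$ to a graded transcendence basis $(\td g_j^\bullet)_{j \in J}$ of $\td \sA^\bullet$ over $\td k^\bullet$ with $|J| = d$ (by graded Noether normalization); then $p_J \colon X \to \bB(J)$ is finite and surjective, and base change yields a finite map $X_I \to \bB(J \setminus I)_{k_I}$, whence Ducros' fiber-dimension theory~\cite{ducros18:families} gives pure dimension $d - |I|$ on non-empty $X_I$.

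The crux is (2). View $X_I$ as a $k_I$-affinoid space of pure dimension $d - |I|$; by (3), its graded reduction $\td \sA_I^\bullet$ over $\td k_I^\bullet$ is generated by the images of $(\td g_j^\bullet)_{j \notin I}$, since the generators with $i \in I$ have been absorbed into $\td k_I^\bullet$. The inclusion $\bigcup_{j \notin I} X_{I \cup \{j\}} \subset \partial X_I$ follows from (1) applied to $X_I$ together with Lemma~\ref{Shilov boundary vs usual boundary}: each $X_{I \cup \{j\}} = (X_I)_{\{j\}}$ has dimension $d - |I| - 1$, and its points are Shilov points of $X_I$ over $k_I$, hence lie in $\partial X_I$. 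Conversely, if $y \in X_I$ has all $\td g_j^\bullet(y)$ ($j \notin I$) algebraic over $\td k_I^\bullet$, then $\td \sH(y)^\bullet$ is algebraic over $\td \sA_I^\bullet$; using Temkin's graded reduction of germs~\ref{graded reduction} (or the formal-model criterion of~\cite[Lemme~6.5.1]{chambert_ducros12:forms_courants} after Shilov-section base change to a strictly $k_I$-affinoid setting), the graded reduction at $y$ remains closed, so $y \in \Int(X_I / k_I)$.

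For (4), iterate (1) and (2). When $|I| = d$, part (1) forces $\dim X_I = 0$, so $X_I$ consists of finitely many rig-points that are Abhyankar points of graded transcendence degree $d$ over $\td k^\bullet$; these are the divisorial points for a suitable formal model, hence Shilov points of $X$. Conversely, any Shilov point $x$ of $X$ satisfies $\operatorname{tr.deg}_{\td k^\bullet} \td \sH(x)^\bullet = d$, so $d$ of the values $\td g_i^\bullet(x)$ are algebraically independent, placing $x$ in $X_I$ for some $|I| = d$. The main obstacle is the converse direction of (2): reducing the relative boundary condition to algebraic dependence of graded generators requires careful use of Temkin's graded-reduction framework in the possibly non-strict $k_I$-affinoid setting, so as to handle uniformly the trivially-valued and non-trivially-valued cases.
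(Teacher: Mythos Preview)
Your overall strategy parallels the paper's, but the argument for~(2) has a genuine error and a genuine gap.

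For the forward inclusion $\bigcup_{j\notin I} X_{I\cup\{j\}}\subset\partial X_I$, your claim that ``its points are Shilov points of $X_I$ over $k_I$'' is false on dimension grounds: the Shilov boundary of $X_I$ is a \emph{finite} set, whereas $X_{I\cup\{j\}}$ has dimension $d-|I|-1$, which may be positive. A correct direct argument factors $X_I\to\bB^1(0,r_j)\to\sM(k_I)$ via $g_j$ and uses the equality case of~\artref{relative boundary}\ref{transitivity of boundaries'} together with $\partial\bB^1(0,r_j)=\{\eta_j\}$. The paper does neither: it quotes Ducros~\cite[Lemme~3.1]{ducros12:squelettes_modeles}, which establishes both inclusions of~(2) for $I=\emptyset$ at once by the characterization $x\in\partial X\iff$ some $\td g_i^\bullet(x)$ is transcendental over $\td k^\bullet$. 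The inductive step is then purely formal: $(X_i)_H=X_{\{i\}\cup H}$, and the same $g_1,\dots,g_r$ generate $\td\sA_i^\bullet$ over $\td k_i^\bullet$.

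For the reverse inclusion you correctly identify the obstacle but do not resolve it. Your sketch (``the graded reduction at $y$ remains closed'') is the right intuition, but it is precisely Ducros's lemma that unpacks Temkin's description $\red(X,x)=\P_{\td\sH(x)^\bullet/\td k^\bullet}\{\td\psi^\bullet(\sA)\}$ and the valuative criterion for when this open subset equals the whole Zariski--Riemann space. Your alternative route via the Shilov section and~\cite[Lemme~6.5.1]{chambert_ducros12:forms_courants} would require checking that the transcendence condition survives that base change, which is not addressed.

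Two smaller points. In~(4), the points of $X_I$ with $|I|=d$ are \emph{not} rig-points of $X$ (their residue fields have graded transcendence degree $d$ over $\td k^\bullet$), and your appeal to divisorial points of a formal model only works in the strictly affinoid, non-trivially valued case; the paper uses Temkin's graded reduction $\td X$ and~\cite[Proposition~3.3]{temkin04:local_properties_II} uniformly. In~(1), your graded Noether normalization route is a reasonable alternative to the paper's direct citation of~\cite[Lemma~1.5.11]{ducros18:families}, but you should note that finiteness of $p_J$ requires the integral extension $\td k^\bullet[(\td g_j^\bullet)_{j\in J}]\subset\td\sA^\bullet$, not merely that $J$ is a transcendence basis.
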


We note that the affinoid space $X_I$ is naturally homeomorphic to the fiber $p_I^{-1}(\eta^{(s)})\subset X$ in the subspace topology: see~\cite[\S 1.4]{berkovic93:etale_cohomology}. If $I$ is non-empty, then we call $X_I$ a \defi{stratum set} for the boundary.

\begin{proof}
	Since $p_I$ is a morphism between the $k$-affinoid spaces $X$ and $\bB(I)$ of pure dimension $d$ and $s$, respectively, and since the Gauss point $\eta^{(I)}$ is an Abhyankar point of $\bB(I)$, we deduce from  \cite[Lemma 1.5.11]{ducros18:families} that $X_I$ is a $\sH(\eta^{(I)})$-affinoid space of pure dimension $d-s$ if non-empty. This proves \eqref{stratification 1'}.

	The remaining argument is based on the following description of the boundary given by Ducros in \cite[Lemme 3.1]{ducros12:squelettes_modeles}. For any $x \in X=\sM(\sA)$, we get an induced character $\psi\colon \sA \to \sH(x)$. Then Temkin's graded reduction of the germ $(X,x)$ is given by~\cite[\S 2]{temkin00:local_properties}
	$$\red(X,x)= \P_{\td\sH(x)/\td k}\{\td\psi^\bullet(\sA)\}.$$
	By definition, this is the set of graded valuations $w$ on $\td\sH^\bullet(x)$ which are trivial on $\td k$ and which satisfy $|\td\psi^\bullet(g)|_w\leq 1$ for all $g \in \sA$. Using that the graded reductions of our fixed family  $g_1, \dots, g_r \in \sA$ generate the graded $\td k^\bullet$-algebra $\td{\sA}^\bullet$, it is enough to check  $|\td\psi^\bullet(g)|_w\leq 1$ for $g=g_1, \dots, g_r$.
	Based on Temkin's result on graded reductions, Ducros shows that $x \in \partial X$ is equivalent to the existence of some $i \in \{1,\dots,r\}$ and of a graded valuation $w$ on  $\td\sH^\bullet(x)$ which is trivial on $\td k^\bullet$ such that $|\psi(g_i)|_w>1$. Ducros shows in his argument that this is equivalent to $\td g_i^\bullet(x) \in \td\sH^\bullet(x)$ being transcendental over $\td k^\bullet$. For $i=1, \dots, r$, he considers the morphism $p_i\colon X \to \bB(0,r_i)$ induced by $g_i$.
	Let $X_i$ be the fiber over  the weighted Gauss point $\eta^{(i)}$ of the closed disc $\bB(0,r_i)=\sM(k\langle r_i^{-1}t \rangle)$ (of radius $r_i$).
	{By \eqref{stratification 1'}, we have  that $X_i$ is a $\sH(\eta^{(i)})$-affinoid space of pure dimension $d-1$ if non-empty.} It follows from the above that the union of all of the $X_i$ is equal to $\partial X$.

	Let us summarize the construction.
	For each $i\in \{1,\dots,r\}$, we define
	$$X_i \coloneqq \{x \in X \mid \text{$\td g_i^\bullet(x)$ is transcendental over $\td  k^\bullet$}\}.$$
	This set might be empty. We have seen that $g_i$ defines a morphism $p_i\colon X \to \bB(0,r_i)=\sM(k\langle r_i^{-1}t \rangle)$ for the Tate algebra $k\langle r_i^{-1}t \rangle$ in the variable $t$, and that $X_i$ is the fiber of $p_i$ over the weighted Gauss point $\eta^{(i)}$. In any case, $X_i$ is an affinoid space over the non-archimedean field  $k_i \coloneqq \sH(\eta^{(i)})$ which is the completion of $k(t)$ with respect to the weighted Gauss valuation for the weight $r_i$. If $X_i$ is non-empty, then $X_i$ has pure dimension $ d-1$. Ducros has shown that
	$$\partial X = \bigcup_{i \in \{1,\dots,r\} } X_{i}.$$

	We prove now the claims \eqref{stratification 2'}--\eqref{stratification 3'} by induction on $s=|I|$. If $s=0$, then $I=\emptyset$ and $X_I=X$. Then the above description of the boundary due to Ducros proves \eqref{stratification 2'} and \eqref{stratification 3'}. Now assume that $s>0$. We pick $i \in I$ and set $J \coloneqq I \setminus \{i\}$. As seen above, we have that $X_i\coloneqq p_i^{-1}(\eta^{(i)})$ is an   affinoid space over $k_i=\mathscr H(\eta^{(i)})$, and we denote the corresponding affinoid algebra by $\sA_i\coloneqq \sO(X_i)$.
	Note here that $k_i$ is the completion of $k(t)$ with respect to the weighted Gauss norm for the weight $r_i$.
	For $\sA_i=\sA \hat\otimes_k k_i$, the same elements $g_1, \dots, g_r$ induce generators of $\td{\sA}_i^\bullet$ as a graded $\td k_i^\bullet$-algebra. Note that $\mathscr H(\eta^{(I)})$ is the completion of $k((t_{i'})_{i' \in I})$ with respect to the Gauss norm for the weight $(r_{i'})_{i' \in I}$  and hence also the completion of $k_i(r_j)_{j \in J}$ with respect to the weighted Gauss norm for the weight $(r_j)_{j \in J}$ identifying the variable $t\in k_i$ with $t_i$. It follows readily that for any $H \subset \{1,\dots, r\}$, we have $(X_i)_H=X_{\{i\} \cup H}$ as  affinoid spaces over $k_I$. Applying this with $H=J$ and using our induction hypothesis for $X_i$,
	property \eqref{stratification 2'} for the boundary of $X_I=(X_i)_J$ follows by induction applied to $X_i$. The same inductive argument shows that $X_I$ is the set of points $x \in X_1$ where $(\td g_j(x))_{j \in J}$ are {algebraically independent} over $\td k_i$. We have seen that $X_i$ is non-empty if and only if $\td g_i(x)$ is transcendental over $\td k$. Since  $t=t_i$ in $k_i$ satisfies $p_i^*(t)=g_i$, we conclude that $x \in X_I$ if and only if $(\td g_l(x))_{l \in I}$ are {algebraically independent} over $\td k$. This proves \eqref{stratification 3'}.

	Finally, we claim  that the union of the  minimal strata $X_I$ for $|I|=d$ is the Shilov boundary of $X$. If $d=0$, this is obious as $X=X_\emptyset$. Now assume that $d \geq 1$.   If $x\in X_I$ then  \eqref{stratification 3'} shows that the graded residue field $\td\sH(x)^\bullet$ contains
	{the $d$ algebraically independent elements $(\td g_i)_{i \in I}$ over $\td k^\bullet$}. Since the reduction $\td X$ in the sense of Temkin \cite[Section 3]{temkin04:local_properties_II} is of pure dimension $d$ (using \cite[Proposition 3.1(v)]{temkin04:local_properties_II} for base change to the strictly affinoid case), we conclude that the reduction of $x$ in $\td X$ is a generic point of $\td X$. By \cite[Proposition 3.3]{temkin04:local_properties_II}, we conclude that $x$ is a Shilov point of $X$.
	Conversely, for every $a \in \sA$, the function $|a|$ takes its maximum in a Shilov point.  We conclude from Lemma~\ref{Shilov boundary vs usual boundary} that $|a|$ takes its maximum in $\partial X\neq\emptyset$.   By~\eqref{stratification 2'}, we conclude that $|a|$ takes its maximum in one of the {strata sets} $X_I$.
	By induction on the dimension $d$, this occurs when $|I|=d$ proving \eqref{stratification 4'}.
\end{proof}

We return to the setting where $k$ is any non-Archimedean field which might be trivially valued. We give now the \emph{global maximum principle for affinoids.}

\begin{thm} \label{thm: global affinoid maximum principle}
	Let $h$ be a classically psh function on a $k$-affinoid space $X$. Then $h$ takes its maximum in a Shilov point of $X$.
\end{thm}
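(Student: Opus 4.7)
The plan is to induct on $d = \dim X$, combining the local maximum principle (Theorem~\ref{thm: maximum principle for classically psh functions}) with the iterated stratification of $\partial X$ in Lemma~\ref{stratification of the boundary'}. The base case $d = 0$ is immediate, since a zero-dimensional $k$-affinoid space is a finite set of rig-points, each of which is a Shilov point. The general case reduces to the pure-dimensional case by passing to a reduced irreducible component of maximal dimension containing a maximum of $h$ (using Proposition~\ref{prop:classically.psh.properties}(\ref{psh.functoriality}) to see that the restriction is classically psh, and the fact that Shilov boundaries of irreducible components lie in the Shilov boundary of $X$). We may thus assume $X$ has pure dimension $d \geq 1$.

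The first step is to show that the maximum of $h$ is attained on $\partial X$. Let $R = \max_X h$, which exists by upper semicontinuity on the compact space $X$, and let $M = h^{-1}(R)$, which is closed. Theorem~\ref{thm: maximum principle for classically psh functions} gives, for every $x \in M \cap \Int(X)$, that $h$ is identically $R$ on a neighborhood of $x$; thus $M \cap \Int(X)$ is open in $X$. Fix a connected component $X_0$ of $X$ meeting $M$; then $X_0$ is again $k$-affinoid of pure dimension $d \geq 1$, and $\partial X_0 \subset \partial X$ because $X_0$ is open and closed in $X$ (using properties~\ref{relation to topological boundary'} and~\ref{transitivity of boundaries'} in~\artref{relative boundary}). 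If $M \cap X_0$ were disjoint from $\partial X_0$, it would be both open and closed in the connected space $X_0$, forcing $M \cap X_0 = X_0$ and in particular $\partial X_0 = \emptyset$; but Lemma~\ref{Shilov boundary vs usual boundary} forbids this. Hence we can fix $x^* \in M \cap \partial X_0 \subset M \cap \partial X$.

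The second step is to descend into the stratification. By Lemma~\ref{stratification of the boundary'}(\ref{stratification 2'}) applied with $I = \emptyset$, we have $x^* \in X_{\{i\}}$ for some $i$, and by Lemma~\ref{stratification of the boundary'}(\ref{stratification 1'}) $X_{\{i\}}$ is a $k_{\{i\}}$-affinoid space of pure dimension $d - 1$. Viewing $X_{\{i\}}$ as the fiber of $p_{\{i\}}$ over $\eta^{(\{i\})}$ yields a canonical morphism $\iota\colon X_{\{i\}} \to X_{k_{\{i\}}}$ of $k_{\{i\}}$-analytic spaces. By Proposition~\ref{prop:classically.psh.properties}(\ref{psh.scalars},\ref{psh.functoriality}), the pullback $h \circ \pi_{k_{\{i\}}/k} \circ \iota$ is classically psh on $X_{\{i\}}$ and attains the value $R$ at $x^*$. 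Applying the inductive hypothesis to $X_{\{i\}}$ produces a Shilov point $y$ of $X_{\{i\}}$ with $h(y) = R$. By Lemma~\ref{stratification of the boundary'}(\ref{stratification 4'}) applied to $X_{\{i\}}$, whose graded reduction is generated by the restrictions of $(g_j)_{j \neq i}$ as in the inductive proof of that lemma, we have $y \in (X_{\{i\}})_J = X_{\{i\} \cup J}$ for some $J$ with $|J| = d - 1$. Since $|\{i\} \cup J| = d$, Lemma~\ref{stratification of the boundary'}(\ref{stratification 4'}) applied to $X$ identifies $y$ as a Shilov point of $X$, closing the induction.

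The main obstacle is the first step: unlike on a compact complex manifold, where connectedness alone forces the local maximum principle to yield a global maximum, on an affinoid space one needs to exploit the existence of a nonempty relative boundary (Lemma~\ref{Shilov boundary vs usual boundary}) to drive the maximum onto $\partial X$. Once $x^* \in \partial X$ is located, the remainder of the argument is a clean recursion mirroring the inductive structure of Lemma~\ref{stratification of the boundary'}.
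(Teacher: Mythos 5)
Your proof is correct and follows essentially the same route as the paper's: the local maximum principle together with the nonemptiness of $\partial X$ (Lemma~\ref{Shilov boundary vs usual boundary}) and connectedness push the maximum onto the boundary, and induction on dimension through the stratification of Lemma~\ref{stratification of the boundary'} then lands it on a minimal stratum, i.e.\ a Shilov point. The only difference is that the paper first performs a base change to reduce to the strictly affinoid, non-trivially valued case (mainly so that the decomposition into irreducible components and the identification of their Shilov boundaries with that of $X$ can be cited from the literature), a reduction you skip but which does not affect the substance of the argument.
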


An analogue for $\theta$-psh functions on a proper scheme is in \cite[Proposition 4.22]{gubler_martin19:zhangs_metrics}.

\begin{proof}
  By base change to a non-trivially valued analytic extension field $k'/k$ such that $X_{k'}$ is strictly affinoid, and using that $\pi_{k'/k}$ maps the Shilov boundary of $X_{k'}$ to the Shilov boundary of $X$~\cite[Proposition 2.21]{gubler_rabinoff_werner:tropical_skeletons}, we may assume that $X$ is strictly affinoid and that $k$ is non-trivially valued.
	Let $X= \bigcup_j X_j$ be the partition of $X$ into its irreducible components.
	It follows from \cite[Proposition 2.15]{gubler_rabinoff_werner:tropical_skeletons} that the union of the Shilov points of all $X_j$ gives the Shilov points of $X$. So it is enough to prove the claim for every $X_j$, and hence we may assume $X$ irreducible. This will be a bit to strong for our inductive argument later, but the argument shows that we may assume $X$ of pure dimension $d $.

	If $d \geq 1$, then we claim that $h$ takes its maximum in the boundary $\partial X$.
	We prove this by contradiction.
	It is clear that the boundary $\partial X$ is the disjoint union of the boundaries of the connected components of $X$, and the same holds if we replace the boundary by the Shilov boundary, so we may assume $X$ connected.
	Let $M$ be the set of points of $X$ where $h$ is maximal. We assumed that $M \cap\partial X = \emptyset$. Since $h$ is continuous, it is clear that $M$ is a closed subset of $X$.
	We know from Theorem \ref{thm: maximum principle for classically psh functions} that if a classically psh function takes a (local) maximum on $X \setminus \partial X$, then $h$ is locally constant. This implies $M$ open in $X$.  As $M$ is non-empty and $X$ is connected, we deduce $M=X$. Since $\partial X \neq \emptyset$ as we have $d \geq 1$ (see Lemma \ref{Shilov boundary vs usual boundary}), we get $M \cap \partial X \neq \emptyset$, which is a contradiction.

	Let now $h$ be a  classically psh function on $X$. We need the description of the boundary $\partial X=\bigcup X_i$ given in Lemma \ref{stratification of the boundary'}, where $i$ runs through $\{1,\dots,r\}$ and $X_i=X_{\{i\}}$ as before. We claim by induction on $d$ that $h$ takes its maximum in one of the  sets $X_I$ with $|I|=d$, and hence in the Shilov boundary of $X$, which proves the theorem. If $h$ is constant, then this is obvious; this also settles the case $d=0$. If $h$ is non-constant, then we have seen at the beginning of the proof that $h$ takes its maximum in $\partial X$, and hence in one of the $X_i$. Since $X_i$ is of dimension $d-1$, we are done by induction as the strata sets of $\partial X_i$  are strata sets of $\partial X$ using Lemma \ref{stratification of the boundary'}.
\end{proof}

The following result was suggested by a referee of the paper \cite{GR25semipositivePL}. We are very grateful for this hint.

\begin{thm} \label{classically psh and pointwise limits}
Let $(u_j)_{j \in J}$ be a net of classically psh functions on the $k$-analytic space $X$ which converges pointwise to an upper semicontinuous function $u$. Then $u$ is classically psh.
\end{thm}

For a sequence of semipositive $\R$-PL functions converging pointwise to an $\R$-PL function, the above result was shown in \cite[Theorem 5.6]{GR25semipositivePL}. When $X$ is a proper algebraic variety, this was known before (\cite[Proposition 7.2]{gubler_kuenneman19:positivity} for uniform limits, \cite[Theorem 1.3]{gubler_martin19:zhangs_metrics} for pointwise limits).

\begin{proof}
  Since $u$ is upper semicontinuous, using the definition of classically psh functions, we may assume that $X$ is a connected smooth curve over a non-trivially valued field. We may assume that $u$ is not identically $-\infty$. Then for every strictly $k$-affinoid domain $U$ of $X$ and every harmonic function $h$ on $U$ with $u \leq h$ on $\partial U$, we have to show that $u \leq h$ holds on $U$.  Since $u_j \to u$ pointwise and since $\del U$ is finite, for any $\epsilon > 0$ we have $u_j - h \le \epsilon$ on $\del U$ for all $j$ sufficiently large.  Since $u_j-h$ is a classically psh function by Proposition \ref{prop:subharmonic.properties}(\ref{shp.cone}) and since the Shilov boundary of $U$ agrees with $\partial U$ \cite[\S 2.1.2]{thuillier05:thesis}, the global affinoid maximum principle in Theorem \ref{thm: global affinoid maximum principle} shows that $u_j-h \le \epsilon$ on $U$ for all $j$ sufficiently large.  It follows that $u - h \le \epsilon$ on $U$, which implies $u \leq h$ on $U$ since $\epsilon$ was arbitrary.
\end{proof}

\section{Pluriharmonic functions} \label{section: pluriharmonic functions}

In this section, we consider a $k$-analytic space $X$ over any non-Archimedean field $k$. Following the discussion of harmonic functions on curves in  \artref{art:harmonic.is.pm.subharmonic}, we generalize this notion as follows to higher dimensions.

\begin{defn} \label{def: pluriharmonic}
A function $h\colon X \to \R$ is called \emph{pluriharmonic} if $h$ and $-h$ are classically psh functions (see Definition \ref{def:classically.psh}).
\end{defn}

\begin{prop}\label{prop: pluriharmonic properties}
	Pluriharmonic functions are continuous and have the  properties:
	\begin{enumerate}
		\item\label{pluriharmonic.sheafiness} \textup{(Sheafiness)} The pluriharmonic functions form a sheaf on $X$.
                \item\label{pluriharmonic.analytic} If $f\in\Gamma(X,\sO_X^\times)$ then $\log|f|$ is pluriharmonic.
		\item\label{pluriharmonic.vector space} \textup{(Vector Space)} The pluriharmonic functions on $X$ form a real vector space which we denote by $\PH(X)$.
		\item\label{pluriharmonic.limits} \textup{(Limits)} If a function $h\colon X \to \R$ is locally a uniform limit of pluriharmonic functions, then $h$ is pluriharmonic.
		\item\label{pluriharmonic.maximum} \textup{(Maximum Principle)} If
		$h \in \PH(X)$ and $h$ attains a local extremum at $x\in\Int(X)$, then $h$ is constant in a neighborhood of $x$.
		\item\label{pluriharmonic.scalars} \textup{(Extension of Scalars)} Let $k'/k$ be an analytic extension field.  Then $h$ pluriharmonic if and only if $h\circ\pi_{k'/k}\colon X_{k'}\to\R$ is pluriharmonic.
		\item\label{pluriharmonic.functoriality} \textup{(Functoriality)} Let $Y$ be a $k$-analytic space and let $f\colon Y\to X$ be a morphism.  If
		$h \in \PH(X)$
		then $h\circ f \in \PH(Y)$, and the converse holds if $f$ is finite and surjective.
	\end{enumerate}
\end{prop}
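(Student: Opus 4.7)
The overall strategy is straightforward: each property for pluriharmonic functions follows by applying the corresponding property for classically psh functions from Proposition~\ref{prop:classically.psh.properties} simultaneously to $h$ and $-h$. Continuity of a pluriharmonic $h$ is automatic, since upper semicontinuity of both $h$ and $-h$ forces continuity. Parts~(\ref{pluriharmonic.sheafiness}), (\ref{pluriharmonic.scalars}), and (\ref{pluriharmonic.functoriality}) then translate directly from Proposition~\ref{prop:classically.psh.properties}(\ref{psh.sheafiness},\ref{psh.scalars},\ref{psh.functoriality}), and the maximum principle~(\ref{pluriharmonic.maximum}) reduces to Theorem~\ref{thm: maximum principle for classically psh functions} applied to $h$ in the case of a local maximum and to $-h$ in the case of a local minimum. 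For~(\ref{pluriharmonic.analytic}), the key point is that $f^{-1}\in\Gamma(X,\sO_X^\times)$ as well, so both $\log|f|$ and $-\log|f|=\log|f^{-1}|$ are classically psh by Proposition~\ref{prop:classically.psh.properties}(\ref{psh.analytic}); the vector space property~(\ref{pluriharmonic.vector space}) then follows by combining the cone property Proposition~\ref{prop:classically.psh.properties}(\ref{psh.cone}) applied to $h$, $-h$, and their sums with the tautology that $-h$ is pluriharmonic whenever $h$ is.

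The only substantive step is~(\ref{pluriharmonic.limits}), since Proposition~\ref{prop:classically.psh.properties}(\ref{psh.limits}) covers only decreasing nets. By sheafiness, I may assume $h$ is a uniform limit of pluriharmonic functions $h_n$ on all of $X$, so $h$ is continuous as a uniform limit of continuous functions. To verify that $h$ is classically psh via Definition~\ref{def:classically.psh} together with~\artref{art:check.on.smooth}, I would fix a morphism $f\colon Y\to X_{k'}$ from a smooth connected strictly $k'$-analytic curve and set $u_n\coloneqq h_n\circ\pi_{k'/k}\circ f$ and $u\coloneqq h\circ\pi_{k'/k}\circ f$. Then each $u_n$ is subharmonic on $Y$ in Thuillier's sense by~\artref{art:recover.thuillier.subharmonicity}, and $u_n\to u$ uniformly on $Y$ with error $\epsilon_n\coloneqq\sup_X|h_n-h|$. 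To check Thuillier's defining property for $u$, take a strictly $k'$-affinoid $U\subset Y$ and a harmonic majorant $h'$ of $u$ on $\del U$; then $h'+\epsilon_n$ is again harmonic and majorizes $u_n$ on $\del U$, so subharmonicity of $u_n$ yields $u_n\leq h'+\epsilon_n$ throughout $U$, and letting $n\to\infty$ gives $u\leq h'$ on $U$. Running the same argument with $-h_n\to-h$ shows that $-h$ is classically psh, so $h$ is pluriharmonic.

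The anticipated obstacle is precisely~(\ref{pluriharmonic.limits}): the decreasing-net statement in Proposition~\ref{prop:classically.psh.properties}(\ref{psh.limits}) is insufficient, and a direct analysis at the curve level is required. However, the difficulty dissolves once one transports the uniform convergence through the pullback (which is immediate, with the same error constants) and invokes the standard comparison with harmonic majorants from Thuillier's definition of subharmonicity.
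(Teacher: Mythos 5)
Your proposal is correct, and for everything except the limits property it coincides with the paper's proof: continuity from upper semicontinuity of $\pm h$, parts (\ref{pluriharmonic.sheafiness}), (\ref{pluriharmonic.analytic}), (\ref{pluriharmonic.vector space}), (\ref{pluriharmonic.scalars}), (\ref{pluriharmonic.functoriality}) by applying Proposition~\ref{prop:classically.psh.properties} to $h$ and $-h$ (using $-\log|f|=\log|f^{-1}|$ for (\ref{pluriharmonic.analytic})), and (\ref{pluriharmonic.maximum}) from Theorem~\ref{thm: maximum principle for classically psh functions}. For (\ref{pluriharmonic.limits}) you take a genuinely different route. The paper stays entirely at the level of the already-established psh calculus: it observes that a uniform limit of pluriharmonic functions can be rewritten as a decreasing (resp.\ increasing) limit of pluriharmonic functions --- shift $h_n$ by small constants tending to zero, which is legitimate because constants are pluriharmonic and $\PH$ is a vector space --- and then invokes Proposition~\ref{prop:classically.psh.properties}(\ref{psh.limits}) directly; so the decreasing-net statement is in fact sufficient, contrary to your assessment that it is not. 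Your alternative descends to Thuillier's definition on a smooth strictly analytic curve and compares against the perturbed harmonic majorant $h'+\epsilon_n$; this is also correct (constants are harmonic, $u_n\le u+\epsilon_n\le h'+\epsilon_n$ on $\del U$, and the real-valuedness of $h$ rules out the $\equiv-\infty$ alternative in \artref{art:recover.thuillier.subharmonicity}), but it reproves at the curve level a monotone-approximation argument that the abstract framework already encapsulates. The paper's route is shorter and avoids reopening Thuillier's definition; yours has the minor virtue of making the mechanism explicit and of not needing the observation that constants are pluriharmonic as a separate input.
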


\begin{proof}
  It is clear that a pluriharmonic function $h$ is continuous as $h$ and $-h$ are upper semicontinuous. Properties \eqref{pluriharmonic.sheafiness}, \eqref{pluriharmonic.analytic},      \eqref{pluriharmonic.vector space}, \eqref{pluriharmonic.scalars} and \eqref{pluriharmonic.functoriality} follow immediately from the corresponding properties of classically psh functions given in Proposition \ref{prop:classically.psh.properties}. In \eqref{pluriharmonic.limits}, we note that a uniform limit of pluriharmonic functions can be easily written as a decreasing (resp.~increasing) limit of pluriharmonic functions and hence \eqref{pluriharmonic.limits} follows from Proposition \ref{prop:classically.psh.properties}\eqref{psh.limits}.
The maximum principle \eqref{pluriharmonic.maximum} follows from the maximum principle for classically psh functions shown in Theorem \ref{thm: maximum principle for classically psh functions}.
\end{proof}

From now on, we denote by $\PH(X)$ the space of pluriharmonic functions on $X$. The affinoid global maximum principle for classically psh functions has the following consequence.

\begin{prop}  \label{pluriharmonic functions and affinoid maximum principle}
	Let $X$ be a  $k$-affinoid space with Shilov boundary $\Gamma$. Then every pluriharmonic function takes its global maximum and its global minimum in a Shilov point of $X$, and the $\R$-linear map
	$$\PH(X) \longrightarrow \R^\Gamma, \quad h \mapsto (h(x))_{x \in \Gamma}$$
	is injective.
\end{prop}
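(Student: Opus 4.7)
The plan is to derive the proposition directly from the global affinoid maximum principle for classically psh functions (Theorem \ref{thm: global affinoid maximum principle}) by exploiting that pluriharmonicity is a two-sided condition: both $h$ and $-h$ are classically psh.

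First, for the extremum statement, I would note that since $h \in \PH(X)$ is continuous (Proposition \ref{prop: pluriharmonic properties}) and $X$ is compact, the global maximum and minimum are attained on $X$. Applying Theorem \ref{thm: global affinoid maximum principle} to the classically psh function $h$ shows that its maximum is attained at some Shilov point of $X$. Applying the same theorem to $-h$, which is also classically psh by Definition \ref{def: pluriharmonic}, shows that the maximum of $-h$, i.e.\ the minimum of $h$, is attained at a Shilov point of $X$.

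For the injectivity statement, suppose $h \in \PH(X)$ satisfies $h(x) = 0$ for every $x \in \Gamma$. I would argue in two steps. Since $h$ is classically psh, the first part yields a Shilov point $x_0 \in \Gamma$ with
\[
\max_{y \in X} h(y) = h(x_0) = 0,
\]
so $h \leq 0$ on $X$. Applying the same reasoning to the classically psh function $-h$, we get a Shilov point $x_1 \in \Gamma$ with
\[
\max_{y \in X} (-h)(y) = -h(x_1) = 0,
\]
so $h \geq 0$ on $X$. Combining these bounds yields $h \equiv 0$, which proves injectivity of the evaluation map.

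There is no substantial obstacle to overcome here, since Theorem \ref{thm: global affinoid maximum principle} does essentially all of the work; the only point requiring care is to apply it symmetrically to $h$ and $-h$, which is legitimate precisely because pluriharmonicity is defined as this two-sided condition.
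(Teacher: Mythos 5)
Your proposal is correct and follows essentially the same route as the paper: apply Theorem \ref{thm: global affinoid maximum principle} to both $h$ and $-h$ to locate the maximum and minimum on the Shilov boundary, and deduce injectivity from the vanishing of both extrema when $h|_\Gamma = 0$. The paper's proof is just a more compressed version of your argument.
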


\begin{proof}
	By definition, the function $h \colon X \to \R$ is pluriharmonic if and only if $h$ and $-h$ are classically psh. It follows from Theorem \ref{thm: global affinoid maximum principle} that a pluriharmonic function takes its maximum and its minimum on the Shilov boundary $\Gamma$. In particular, a pluriharmonic function is zero if and only if its restriction to $\Gamma$ is zero, which proves the last claim.
\end{proof}

\begin{thm} \label{thm: finiteness of pluriharmonic}
	Let $X$ be a quasicompact $k$-analytic space. Then $\PH(X)$ is a finite dimensional real vector space.
\end{thm}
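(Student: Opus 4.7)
The plan is to reduce the statement to the affinoid case, where it follows immediately from Proposition~\ref{pluriharmonic functions and affinoid maximum principle} together with the finiteness of the Shilov boundary.

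First I would cover $X$ by affinoid domains. By the definition of a $k$-analytic space recalled in~\artref{non-archimedean analytic spaces}, every point of $X$ admits a neighborhood of the form $X_1\cup\cdots\cup X_n$ where each $X_i$ is an affinoid domain; since $X$ is quasicompact, we may extract a \emph{finite} cover $X = \bigcup_{i=1}^n X_i$ by affinoid domains $X_i$ of $X$.

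Next I would use Proposition~\ref{prop: pluriharmonic properties}(\ref{pluriharmonic.functoriality}) applied to the inclusions $\iota_i\colon X_i \hookrightarrow X$ to obtain an $\R$-linear restriction map
\[
  \rho\colon \PH(X) \longrightarrow \prod_{i=1}^n \PH(X_i),\qquad h \longmapsto (h|_{X_i})_{i=1}^n.
\]
This map is injective: if $h|_{X_i} = 0$ for all $i$, then $h$ vanishes on $\bigcup_{i=1}^n X_i = X$. (Alternatively, one may invoke the sheafiness of pluriharmonic functions from Proposition~\ref{prop: pluriharmonic properties}(\ref{pluriharmonic.sheafiness}).) So it suffices to show that each $\PH(X_i)$ is finite dimensional.

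Finally, since each $X_i$ is a $k$-affinoid space, its Shilov boundary $\Gamma_i$ is finite by the result of Berkovich recalled in~\artref{Affinoid spaces}. Proposition~\ref{pluriharmonic functions and affinoid maximum principle} then yields an injection $\PH(X_i) \hookrightarrow \R^{\Gamma_i}$, so $\dim_{\R} \PH(X_i) \leq |\Gamma_i| < \infty$. Combining these bounds with the injectivity of $\rho$ gives
\[
  \dim_{\R} \PH(X) \;\leq\; \sum_{i=1}^n |\Gamma_i| \;<\; \infty,
\]
which is what we wanted. There is no real obstacle here, since all the substantive work has been done in the affinoid global maximum principle (Theorem~\ref{thm: global affinoid maximum principle}) and in the finiteness of Shilov boundaries; the present statement is a purely formal consequence via a finite affinoid cover.
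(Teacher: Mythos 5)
Your proof is correct and follows essentially the same route as the paper: a finite affinoid cover obtained from quasicompactness, injectivity of the restriction map, and the injection $\PH(X_i)\hookrightarrow\R^{\Gamma_i}$ from Proposition~\ref{pluriharmonic functions and affinoid maximum principle} together with finiteness of the Shilov boundary. The paper's proof is just a terser version of the same argument.
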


\begin{proof}
	Using quasicompactness, we can cover $X$ by finitely  affinoid  subsets $U_i$. Using Proposition \ref{pluriharmonic functions and affinoid maximum principle}, we know that $\PH(U_i)$ is finite dimensional for every $i$. Since pluriharmonic functions are determined by their restrictions to the $U_i$'s, this proves the claim. For later purposes, we note that the linear map
	\begin{equation} \label{injective map and Shilov boundary}
	\PH(X) \longrightarrow \R^\Gamma, \quad h \mapsto (h(x))_{x \in \Gamma}
	\end{equation}
is injective where $\Gamma$ is the union of the Shilov boundaries of all $U_i$'s.
\end{proof}

\begin{rem} \label{upper bound for pluriharmonic dimension}
	Assume that $k$ is non-trivially valued and that $X$  is a quasicompact $k$-analytic space.  Let $\fX$ be any formal model of $X$ over the valuation ring $k^\circ$. Then $\dim(\PH(X))$ is bounded by the number of divisorial points of $X$ associated to $\fX$.

	Indeed, such a formal model is quasi-compact, hence can be covered by finitely many formal affine open subsets $\fU_i$. Then the above argument applies to the strictly affinoid covering $U_i= \fU_{i,\eta}$ of $X$. Let $\Gamma$ be the set of divisorial points associated to $\fX$. This is just the union of the Shilov points of the $U_i$'s \cite[Proposition A.3]{gubler_martin19:zhangs_metrics}. Injectivity of \eqref{injective map and Shilov boundary} leads to the desired bound.
\end{rem}

\begin{thm} \label{pointwise convergence of pluriharmonic}
	Let $X$ be a $k$-analytic space over any non-Archimedean field $k$, and let $f \colon X \to \R$ be a continuous function. If $f$ is the pointwise limit of a sequence of pluriharmonic  functions on $X$, then the convergence is locally uniform and $f$ is a pluriharmonic  function.
\end{thm}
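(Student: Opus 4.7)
The plan is to exploit the global maximum principle for pluriharmonic functions on $k$-affinoid spaces (Proposition \ref{pluriharmonic functions and affinoid maximum principle}) applied to the differences $h_n - h_m$. Fix $x \in X$. By the definition of a $k$-analytic space \cite[\S 1.2]{berkovic93:etale_cohomology}, $x$ has a neighborhood $V = U_1 \cup \cdots \cup U_m$ in $X$ with each $U_i$ a $k$-affinoid domain containing $x$. Let $\Gamma_i$ denote the (finite) Shilov boundary of $U_i$ and set $\Gamma = \bigcup_{i=1}^m \Gamma_i$, which is again finite.

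For each pair $n,p$, the function $h_n - h_p$ is pluriharmonic on $X$ by Proposition \ref{prop: pluriharmonic properties}(\ref{pluriharmonic.vector space}), so its restriction to each $U_i$ is pluriharmonic by functoriality. Proposition \ref{pluriharmonic functions and affinoid maximum principle} then gives
\[
  \sup_{U_i}|h_n - h_p| \;=\; \max_{\Gamma_i}|h_n - h_p| \;\le\; \max_{\Gamma}|h_n - h_p|.
\]
Since $\Gamma$ is finite and $h_n(y)\to f(y)$ pointwise, each sequence $(h_n(y))_n$ for $y\in\Gamma$ is Cauchy, and hence $\max_{\Gamma}|h_n-h_p|\to 0$ as $n,p\to\infty$. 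Therefore $(h_n|_V)$ is uniformly Cauchy on $V$, hence uniformly convergent there; by pointwise convergence, the uniform limit must be $f|_V$. Since $x$ was arbitrary, the convergence $h_n \to f$ is locally uniform on $X$.

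Finally, since pluriharmonic functions are preserved under local uniform limits by Proposition \ref{prop: pluriharmonic properties}(\ref{pluriharmonic.limits}), the function $f$ is pluriharmonic. There is no real obstacle: once the finite Shilov boundary controls the sup norm on each affinoid piece, pointwise convergence on the finite set $\Gamma$ automatically upgrades to uniform convergence on $V$. The only point requiring mild care is that $X$ need not be good, but covering a neighborhood of $x$ by finitely many affinoid domains is exactly what the general definition of $k$-analytic space provides.
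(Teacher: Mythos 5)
Your proof is correct, and it rests on the same two pillars as the paper's argument: the finiteness of the Shilov boundary of each affinoid piece and the global maximum principle of Proposition~\ref{pluriharmonic functions and affinoid maximum principle}, which together show that the sup norm on an affinoid neighborhood is controlled by the values on a finite set $\Gamma$. Where you diverge is in how the limit is identified as pluriharmonic. The paper first reduces to $X$ quasicompact, observes that restriction to $\Gamma$ embeds $\PH(X)$ as a finite-dimensional (hence closed) subspace of $\R^\Gamma$ by \eqref{injective map and Shilov boundary}, and deduces that the sequence converges in that subspace to some $h\in\PH(X)$; uniform convergence then follows because the $\Gamma$-sup norm agrees with the sup norm on $X$. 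You instead apply the maximum principle to the differences $h_n-h_p$ to get a uniformly Cauchy sequence directly, and then invoke the stability of pluriharmonicity under local uniform limits, Proposition~\ref{prop: pluriharmonic properties}(\ref{pluriharmonic.limits}). Your route has the advantage of bypassing Theorem~\ref{thm: finiteness of pluriharmonic} entirely, at the cost of leaning on the Limits property (which ultimately traces back to the decreasing-nets statement for classically psh functions); the paper's route makes the finite-dimensionality of $\PH(X)$ do the work and produces the limit function as an element of $\PH(X)$ without appealing to that closure property. Both are complete; your handling of the non-good case via a finite affinoid covering of a neighborhood of $x$ matches what the paper does elsewhere and is the right fix.
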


\begin{proof}
	This is a local statement, so we may assume that $X$ is quasicompact. Using the notation from the proof of Theorem \ref{thm: finiteness of pluriharmonic}, injectivity of \eqref{injective map and Shilov boundary} shows that
	$$\|h\| = \sup_{x\in \Gamma} |h(x)|$$
	defines a norm on the finite dimensional $\R$-vector space $\PH(X)$. Let $h_m$ be a sequence of pluriharmonic  functions converging pointwise to $f$. Then $h_m$ converges to some $h \in \PH(X)$ with respect to the norm $\metr$ as the sequence $(h_m(x))_{x \in \Gamma}$ converges in $\R^\Gamma$ and hence in every subspace. It follows from Proposition  \ref{pluriharmonic functions and affinoid maximum principle} that a pluriharmonic function takes its maximum and its minimum on  $\Gamma$ and hence $\metr$ is the sup-norm on $X$. We conclude that $h_m$ converges uniformly to $h$ on $X$ and hence $h=f$, proving the claim.
\end{proof}

\begin{art} \label{strictly semistable schemes}
  In the following, we assume $k$ non-trivially valued. A locally finitely presented quasicompact formal scheme $\fX$ over $k$ is called \defi{strictly semistable} if every $x \in \fX$ has an open neighborhood $\fU$ admitting an \'etale morphism
	\begin{equation}
		\label{eq:formal.sss.pair}
		\psi\colon \fU \To
		\Spf\big( k^\circ\angles{x_0, \dots, x_d} / \angles{x_0 \dots x_r - \pi} \big)
	\end{equation}
for some $r \leq d$ and some $\pi\ne0$ in the valuation ring $k^\circ$. Note that strictly semistable formal schemes are strictly polystable in the sense of \cite[Definition 1.2]{berkovic99:locally_contractible_I} and that the generic fiber $X$ is smooth. Berkovich has shown that there is a canonical skeleton $S(\fX)$ in $X$ associated to $\fX$ coming with a canonical deformation retraction $\tau\colon X \to S(\fX)$ \cite[Theorem 5.2]{berkovic99:locally_contractible_I}. The skeleton of $\fU$ is the simplex
$$S(\fU)=\bigl\{u \in \R_{\geq 0}^{r+1} \mid u_0+\dots + u_r =v(\pi)\bigr\}$$
which is isomorphic to the standard simplex $\{y \in \R_{\geq 0}^r \mid y_1+\dots+y_r \leq v(\pi)\}$ of length $v(\pi)$, where $v$ is the valuation of $k$. The skeleton of $\fX$ is defined by gluing the skeletons $S(\fU)$ along a formal open covering of $\fX$ by $\fU$'s as above, which gives $S(\fU)$ a canonical triangulation.
\end{art}

\begin{prop} \label{semistable and pluriharmonic}
  Let $X$ be a good, quasicompact, strictly $k$-analytic space that has a strictly semistable formal $k^\circ$-model. Then every pluriharmonic function $h$ on $X$ is $\R$-PL.
\end{prop}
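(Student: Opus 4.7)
The plan is to show $h$ factors through Berkovich's retraction $\tau\colon X\to S(\fX)$ and that the induced function on the skeleton is affine on each simplex. Being $\R$-PL is G-local, so I may pass to a formal open chart $\fU\subset\fX$ equipped with an \'etale morphism $\psi$ to $\fY=\Spf\bigl(k^\circ\langle x_0,\dots,x_d\rangle/(x_0\cdots x_r-\pi)\bigr)$, and after possibly shrinking further so that $\psi$ is locally an isomorphism onto its image. Under $\psi$, the skeleton $S(\fU)$ is identified with $\Delta=\{u\in\R_{\geq0}^{r+1}\mid u_0+\cdots+u_r=v(\pi)\}$ and $\tau$ is given by $z\mapsto(-\log|x_0(z)|,\dots,-\log|x_r(z)|)$; note that $x_0,\dots,x_r$ do not vanish on $\fU_\eta$ since $x_0\cdots x_r=\pi\neq0$.

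The first step is to prove $h=h\circ\tau$. For $u$ in the relative interior of $\Delta$, the fiber $F_u=\tau^{-1}(u)=\{z\in\fU_\eta\mid|x_i(z)|=e^{-u_i},\,0\leq i\leq r\}$ is a $k$-affinoid domain of $\fU_\eta$; after the shrinking above, $F_u$ is isomorphic to the product of $r$ weighted Gauss ``circles'' (in $x_0,\dots,x_{r-1}$, with $x_r$ determined by the relation) and a $(d-r)$-dimensional closed polydisc (in $x_{r+1},\dots,x_d$). Each factor has a unique Shilov point, so $F_u$ has a single Shilov point $\eta_u$, which is the image of the canonical skeleton section at~$u$. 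The restriction $h|_{F_u}$ is pluriharmonic by Proposition~\ref{prop: pluriharmonic properties}(\ref{pluriharmonic.functoriality}), so Theorem~\ref{thm: global affinoid maximum principle} applied to $h$ and to $-h$ forces $h|_{F_u}\equiv h(\eta_u)$. Continuity of $h$ extends this equality to all $u\in\Delta$, yielding $h=h_\Delta\circ\tau$ for a continuous $h_\Delta\colon\Delta\to\R$.

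The second step is to show $h_\Delta$ is affine on $\Delta$. Given $\vec a=(a_0,\dots,a_r)\in\Z^{r+1}$ with $\sum_ia_i=0$, choose $\alpha_0,\dots,\alpha_r$ in a suitable analytic extension of $k$ with $\prod\alpha_i=\pi$ and constants $\beta_j\in k^\circ$ for $j>r$; then $t\mapsto(\alpha_0 t^{a_0},\dots,\alpha_r t^{a_r},\beta_{r+1},\dots,\beta_d)$ defines a morphism from a closed annulus into $\fU_\eta$ whose image retracts onto a line segment in $\Delta$ of direction $\vec a$. By Definition~\ref{def: pluriharmonic} and Proposition~\ref{prop:grj.harmonicity.curves}, the pullback of $h$ to this annulus is harmonic, hence $\R$-PL of the form $a+b\log|t|$, and so is affine on the annulus skeleton. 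Consequently $h_\Delta$ is affine on every line segment in $\Delta$ with rational direction. Letting $\ell$ be the unique affine interpolant of $h_\Delta$ at the vertices of $\Delta$ and inducting on the dimension of the face, $h_\Delta-\ell$ vanishes on $\partial\Delta$ and then on any interior point via a rational-direction chord, giving $h_\Delta=\ell$.

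Combining the two steps, on $\fU_\eta$ we have $h=c-\sum_{i=0}^rb_i\log|x_i|$ for real constants $c$, $b_i$, which is $\R$-PL. Gluing over the chart cover of $\fX$ then yields that $h$ is $\R$-PL on $X$. The main obstacle is step~1: precisely identifying the Shilov boundary of $F_u$ as the single point $\eta_u$ equal to the image of the canonical skeleton section, which requires a careful analysis of the canonical reduction of products of weighted Tate algebras with a polydisc Tate algebra and verification that the shrinking of $\fU$ preserves the necessary skeleton structure.
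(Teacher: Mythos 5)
Your overall strategy---factor $h$ through the retraction $\tau$ to the skeleton and then show the induced function is affine on each simplex---is a genuinely different route from the one the paper takes, but as written it has two real gaps, the first of which you flag yourself without resolving. Step 1 rests on the claim that, after shrinking, the chart map $\psi$ is ``locally an isomorphism onto its image,'' so that $\tau^{-1}(u)$ is a product of weighted circles and a polydisc and hence has a unique Shilov point. An \'etale morphism cannot in general be turned into an open immersion by Zariski-shrinking the source (think of a Kummer-type cover), so this reduction is not available. Without it, $\psi_\eta$ restricted to $\tau^{-1}(u)$ is only finite onto the corresponding fiber in the standard model; its Shilov boundary is then a finite set that may contain several points, and your maximum-principle argument only shows that $h$ attains its extrema on that finite set, not that it is constant on the fiber. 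You would also need to justify that $\tau^{-1}(u)$ coincides with the tropicalization fiber $\{z\mid|x_i(z)|=e^{-u_i}\}$ for the \'etale chart, which is a nontrivial statement about Berkovich's construction. Separately, in Step 2 the explicit map $t\mapsto(\alpha_0t^{a_0},\dots)$ is a morphism into the generic fiber of the \emph{standard} formal scheme, not of $\fU$; lifting it along the \'etale chart (after replacing the annulus by a finite cover, which must again be identified as an annulus) is an additional argument you do not supply.

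The paper's proof sidesteps all of this: the Shilov points of $\fU_\eta$ are exactly the vertices of the simplex $S(\fU)$, so one takes $\ell$ to be the affine function on $S(\fU)$ interpolating $h$ at the vertices, observes that $\ell\circ\tau$ is a constant plus a linear combination of the functions $-\log|x_i|$ and hence is pluriharmonic by Proposition~\ref{prop: pluriharmonic properties}(\ref{pluriharmonic.analytic}) (each $x_i$ is invertible since $x_0\cdots x_r=\pi\neq0$), and concludes $h=\ell\circ\tau$ from the injectivity of restriction to the Shilov boundary in Proposition~\ref{pluriharmonic functions and affinoid maximum principle}. That single application of the global maximum principle replaces both of your steps; if you prefer to keep your route, the same injectivity statement applied to $h$ minus a candidate pluriharmonic comparison function is also the cleanest way to close the gap in your Step 1.
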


\begin{proof}
  Let $\fX$ be a strictly semistable formal $k^\circ$-model of $X$. Since $\R$-piecewise linearity is a $\G$-local property with respect to the Grothendieck topology generated by the strictly analytic domains, we may assume that $\fX$ is an open subset $\fU \to \Spf\big( k^\circ\angles{x_0, \dots, x_d} / \angles{x_0 \dots x_r - \pi} \big)$ as in~\artref{strictly semistable schemes}, and that $\fX$ is formal affine. The Shilov points of $X$ are then the vertices of the simplex $S(\fX)$. Using that $S(\fX)$ is a simplex, there is a unique linear function $\ell\colon S(\fX) \to \R$ that agrees with $h$ on the vertices.  Since $\tau$ can be seen as a tropicalization (see \cite[4.3]{gubler_rabinoff_werner16:skeleton_tropical} for the argument), it is clear that $\ell\circ \tau$ is $\R$-PL, and since $\ell\circ\tau$ is a linear combination of pluriharmonic functions (namely, the functions $-\log|x_i|$ for $i=0,\ldots,d$), we see that $\ell\circ\tau$ is pluriharmonic by Proposition~\ref{prop: pluriharmonic properties}\eqref{pluriharmonic.analytic}.  By Proposition \ref{pluriharmonic functions and affinoid maximum principle}, we have $h=\ell \circ \tau$, proving the claim.
\end{proof}

\appendix

\section{The topology of the unit disk} \label{sec: topology of unit disc}

Let $k$ be a non-archimedean field which might be trivially valued.  In this appendix, we give an elementary description of the topology of the unit disc $\bB_k^1=\sM(k\langle T \rangle)$.  These are well-known facts which we present in the way we need in the body of text.

\begin{art} \label{definition disks}
A rig-point of $\bB^1_k$ is induced by a maximal ideal of $k\langle T \rangle$. Let $k'$ be the completion of an algebraic closure $k^a$ of $k$ and let $G$ be the Galois group of $k^a/k$. By \cite[1.3.5]{berkovic90:analytic_geometry} or \cite[(3.1.1.2)]{ducros14:structur_des_courbes_analytiq}, we may write $\bB_k^1=\bB_{k'}^1/G$ as a topological quotient; we denote the quotient map by $\pi\colon \bB_{k'}^1 \to \bB_k^1$. For a rig-point $z$ and $0<\rho \leq 1$, we define \emph{the closed  disc
	$\bB_k^1(z,\rho)$ with center $z$ and radius $\rho$}  by taking any $z' \in \pi^{-1}(z)$ and then setting
$$\bB_k^1(z,\rho) \coloneqq \pi\bigl(\{x' \in \bB_{k'}^1 \mid |(T-z')(x')|\leq \rho\}\bigr).$$
This is a closed set in $\bB_k^1$ which does not depend on the choice of $z'$ and which contains the center $z$. We often simply write $\bB^1(z,\rho)=\bB_k^1(z,\rho)$.  Moreover, we can choose any rig-point of $\bB^1(z,\rho)$ as a center. Note that for any rig-point $y'$ of $\bB_{k'}^1$, the image $\pi(\bB_{k'}^1(y',\rho))$ is a closed disc $\bB_k(z,\rho)$ for a suitable rig-point $z$ as center. In the trivially valued case, we can simply take $z=\pi(y')$ and in the non-trivially valued case, this follows by density of rigid points \cite[Proposition 2.1.15]{berkovic90:analytic_geometry} in a strictly affinoid space. Similarly, we define \emph{the open  disc
	$\bB_+^1(z,\rho)$ with center $z$ and radius $\rho$} using $ |(T-z')(x')|< \rho$ as a defining inequality over $k'$. The above remarks also apply to open balls. It follows from the ultrametric triangle inequality that two open or closed balls are either disjoint or one is contained in the other.
\end{art}

\begin{art} \label{topology of unit disc}
The topology of the unit disc $\bB_k^1$ is well-known. Roughly speaking, it has the structure of an infinite tree with all branches connected to the Gauss point $\eta_1$ at the top (see \cite[Example 1.4.4, \S 4.2]{berkovic90:analytic_geometry} or \cite[\S 3.4]{ducros14:structur_des_courbes_analytiq} for more details). We call $U \subset \bB_k^1$ a \defi{standard open subset} if it has the form $U=B_+ \setminus \bigcup_i B_i$ where $B_+$ is either an open ball $\bB_+(z,\rho)$ with a rig-point $z$ as center and with $\rho \in (0,1)$ or $B_+=\bB_k^1$, and where the $B_i$ are finitely many closed balls with rig-points as centers and with positive radii.  Since two discs are either disjoint or one is contained in the other, we see that the union and the intersection of two standard open subsets are again  standard open subsets.  We claim that the standard open subsets form a basis of topology for $\bB_k^1$. In the non-trivially valued case, this follows from \cite[\S 4.2]{berkovic90:analytic_geometry} using \S \ref{definition disks} to find rig-centers in the balls. In the trivially valued case, $\bB_k^1$ is the union of closed intervals starting in the Gauss point $\eta_1$ and ending in the rig points of $\bB_k^1$. The topology is very simple as $\bB_k^1\setminus\{\eta_1\}$ is the topological disjoint union of the half-open intervals $(\eta_1,z]$ with $z$ ranging over the rig points of $\bB^1$. A subset $U$ of $\bB_k^1$ is an open neighborhood of $\eta_1$ if and only if $U \cap [\eta_1,z]$ is open for all rig points $z$ and equal to $[\eta_1,z]$ except for finitely many $z$. We refer to \cite[Example 1.4.4]{berkovic90:analytic_geometry} or \cite[\S 1.1.6]{boucksom2022globalpluripotentialtheorytrivially} for these facts, from which we deduce easily that the standard open subsets form also a basis of topology in the trivially valued case.
\end{art}

The main result of this appendix is the following lemma.

\begin{lem} \label{ball described by minimal polynomial}
Let $\bB^1(z,\rho)$ be a closed disc as above and let $f(T)$ be the minimal polynomial of the center $z$, i.e., a generator of the corresponding maximal ideal of $k\angles T$. Then there is a unique $r \in (0,1]$ such that
$$\bB^1(z,\rho)=\bigl\{x \in \bB_k^1\mid  |f(x)|\leq r\bigr\}.$$
\end{lem}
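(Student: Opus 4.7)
The plan is to work over $k'$, the completion of an algebraic closure of $k$, where $f$ splits into linear factors, and then descend via the topological quotient $\pi \colon \bB_{k'}^1 \to \bB_k^1$. Uniqueness of $r$ is immediate from the statement: $r$ is forced to be the supremum of $|f|$ on $\bB_k^1(z,\rho)$, so the real content is existence.

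Lift $z$ to a rig-point $z_1' \in \bB_{k'}^1$ and let $z_1',\dots,z_n'$ be its Galois conjugates, so $f(T) = \prod_{i=1}^n(T-z_i')$. Because $\bB_k^1(z,\rho) = \pi(\bB_{k'}^1(z_1',\rho))$ is Galois-stable, its full preimage is the Galois orbit of $\bB_{k'}^1(z_1',\rho)$; since two closed discs of equal radius in $\bB_{k'}^1$ are either equal or disjoint, this orbit is a finite disjoint union $B_1 \sqcup \dots \sqcup B_s$ of closed discs of radius $\rho$ with $B_1 = \bB_{k'}^1(z_1',\rho)$, and Galois equivariance implies each $B_j$ contains the same number $m = n/s$ of the $z_i'$.

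For any (Berkovich) point $x' \in B_1$, the factors $|(T-z_i')(x')|$ with $z_i' \notin B_1$ are forced by the ultrametric triangle inequality to equal the constant $|z_1'-z_i'|$ (since $|z_1'-z_i'| > \rho \geq |x'-z_1'|$), while the $m$ factors with $z_i' \in B_1$ are each $\leq \rho$. Hence $|f(x')| \leq r := \rho^m \cdot C$ with $C := \prod_{z_i'\notin B_1}|z_1'-z_i'|$, with equality at the Shilov boundary point of $B_1$. A Galois-symmetry check shows that the analogous constant attached to any other $B_j$ in the orbit is the same (the Galois action is isometric and permutes the balls compatibly with the roots), so $r$ is intrinsic. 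Clearly $r > 0$, and $r \leq 1$ follows from $|f| \leq 1$ on $\bB_{k'}^1$.

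The main step, and the place where the estimate needs real care, is the reverse inclusion: for $x' \in \bB_{k'}^1$ not in any $B_j$, one must show $|f(x')|>r$. Choose an index $j$ minimizing $d := |(T-z_j')(x')|$; then $d > \rho$. A short ultrametric calculation shows $|(T-z_i')(x')| = \max(d, |z_j'-z_i'|)$ for every $i$, using that $d$ is minimal to handle the borderline case $d = |z_j'-z_i'|$. For the $m$ roots $z_i' \in B_j$ this max equals $d$, contributing $d^m > \rho^m$; for the remaining roots it is at least $|z_j'-z_i'|$, contributing a product equal (by the Galois-invariance noted above) to $C$. Multiplying yields $|f(x')| > \rho^m C = r$, as required. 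Combining both directions gives $\pi^{-1}(\bB_k^1(z,\rho)) = \{x'\in\bB_{k'}^1 : |f(x')| \leq r\}$, which descends under $\pi$ to the desired identity on $\bB_k^1$.
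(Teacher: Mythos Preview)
Your existence argument is correct and follows essentially the same route as the paper's: factor $f$ over $k'$ and control $|f(x')|$ via ultrametric estimates on the linear factors, identifying which roots lie within distance $\rho$ of a chosen root. The paper packages this as the statement that $|f(x')|$ is a strictly increasing function of $\rho(x') := \min_i |(T-z_i')(x')|$, while you package it via the Galois decomposition of $\pi^{-1}(\bB_k^1(z,\rho))$ into balls; the underlying computation is the same.

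Two small points deserve attention. First, your uniqueness argument has a gap: saying ``$r$ is forced to be the supremum of $|f|$ on $\bB_k^1(z,\rho)$'' shows only that this supremum is the \emph{smallest} $r$ that could work, not the only one. You must also rule out larger $r$, i.e., show that $|f|$ takes values arbitrarily close to $r$ from above; this follows, for instance, from continuity of $|f|$ on the connected space $\bB_{k'}^1$ together with $|f(\eta_1)|=1$, or from your own strict inequality applied at a point just outside $B_1$. The paper handles this by proving strict monotonicity of $r$ in $\rho$ and invoking $|f|(\bB_k^1)=[0,1]$.

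Second, in the reverse inclusion you write that the remaining factors contribute ``a product equal \dots\ to $C$''. In fact each such factor is $\max(d,|z_j'-z_i'|)$, which is only $\geq |z_j'-z_i'|$ in general (it equals $d$ when $d>|z_j'-z_i'|$), so the product is only $\geq C$. This is harmless for your conclusion, since $|f(x')| \geq d^m\cdot C > \rho^m\cdot C = r$ is all you need, but the wording should be adjusted.
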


\begin{proof}
The argument is similar to  the Newton polygon technique from algebraic number theory. We write
$f(T)= (T-\alpha_1) \cdots (T-\alpha_d)$ for $\alpha_1, \dots , \alpha_d \in k^a$.
Let $k'$ be the completion of the algebraic closure $k^a$ as in~\artref{definition disks}.  We have $\alpha_1,\dots,\alpha_d\in k'^\circ$ by Gauss' lemma. Let $\pi\colon \bB_{k'}^1 \to \bB_k^1$ be the structure map, so we have
$$\pi^{-1}(\bB^1(z,\rho))= \bigl\{x'\in \bB_{k'}^1 \mid \min_{j=1,\dots,d} |(T-\alpha_j)(x')| \leq  \rho\bigr\}.$$
We set $\rho(x')\coloneqq  \min_{j=1,\dots,d} |(T-\alpha_j)(x')|$; this is crucial for the argument. Using that $\pi$ is surjective, it is enough to show that for any $x'\in \bB_{k'}^1$, the function $r(x')\coloneqq |f(x')|$ depends only on $\rho(x')$ and is strictly increasing in  $\rho(x')$. Then uniqueness follows  from the obvious fact $r(\bB_k^1)=[0,1]$.

We first prove that $r(x')$ depends only on $\rho(x')$ for $x'\in \bB_{k'}^1$. There is a root $\alpha$ of $f(T)$ in $k'$ such that
$$|(T-\alpha)(x')|=\min_{j=1,\dots,d} |(T-\alpha_j)(x')|=\rho(x');$$
we define
$$M \coloneqq \bigl\{j \in \{1,\dots,d\}\mid |(T-\alpha_j)(x')|= |(T-\alpha)(x')| \bigr\}.$$
Clearly, this does not depend on the choice of $\alpha$.
Since $\rho(x')$ and $r(x')$ are invariant under $\mathrm{Gal}(k^a/k)$, we may assume  that $\alpha=\alpha_1$.
For any $j\in\{1,\dots,d\}$, we have
\begin{equation} \label{two related inequalities}
|(T-\alpha)(x')| \leq |(T-\alpha_j)(x')| \quad \text{and} \quad |\alpha_j- \alpha| \leq  |(T-\alpha_j)(x')|
\end{equation}
and at most one of the two inequalities can be strict. Indeed, the first inequality is from the choice of $\alpha$. The second inequality and the claim about strictness follow from the ultrametric triangle inequality in $\sH(x')$. We deduce from \eqref{two related inequalities} that for $j \in M$, we have
$$|\alpha_j-\alpha| \leq  |(T-\alpha_j)(x')|= |(T-\alpha)(x')|=\rho(x')$$
and that for $j \in \{1,\dots,d\} \setminus M$, we have
$$|\alpha_j-\alpha| =  |(T-\alpha_j)(x')| > |(T-\alpha)(x')|=\rho(x').$$
Using $\alpha=\alpha_1$, this shows $M= \{j \in \{1,\dots,d\}\mid |\alpha_j-\alpha_1|\leq \rho(x')\}$, proving that $M$ depends only on $\rho(x')$. Let $m$ be cardinality of $M$. Then we have
$$r(x')=|f(x')|=|(T-\alpha_1)(x')| \cdots |(T-\alpha_d)(x')|= \rho(x')^m \cdot
\prod_{j \not\in M} |(\alpha_j-\alpha_1)(x')|,$$
which shows that $r(x')$ depends only on $\rho(x')$. To prove that $r(x')$ is strictly increasing in $\rho(x')$, let $x',y' \in \bB_{k'}^1$ with $\rho(y')>\rho(x')$. To prove $r(y')>r(x')$, we may again replace $x',y'$ by suitable conjugates to assume $\rho(x')=|(T-\alpha_1)(x')|$ and  $\rho(y')=|(T-\alpha_1)(y')|$. Using the above expression for $M$, we deduce that $M(x')\coloneqq \{j \in \{1,\dots,d\} \mid |(T-\alpha_j)(x')| =\rho(x')\}$ satisfies
$$M(x')=\bigl\{j \in \{1,\dots,d\} \mid |\alpha_j-\alpha_1| \leq \rho(x') \bigr\}.$$
Using the same for $y'$, we deduce $M(x') \subset M(y')$. It remains to prove the inequality in
$$r(x') = |(T-\alpha_1)(x')| \cdots |(T-\alpha_d)(x')| < r(y')= |(T-\alpha_1)(y')| \cdots |(T-\alpha_d)(y')|$$
which we will show by comparing the factors. For $j \in M(x')$, we use $$|(T-\alpha_j)(x')|=\rho(x')< \rho(y')=|(T-\alpha_j)(y')|$$ as $M(x') \subset M(y')$. For $j \in M(y') \setminus M(x')$, we have $|(T-\alpha_j)(x')|>|(T-\alpha_1)(x')|$, hence $$|(T-\alpha_j)(x')|= |\alpha_j-\alpha_1| \leq |(T-\alpha_j)(y')|$$ by \eqref{two related inequalities}.
For $j\in \{1,\dots,d\} \setminus M(y')$, the inclusion $M(x') \subset M(y')$ and \eqref{two related inequalities} show that
$$|(T-\alpha_j)(x')| = |\alpha_j-\alpha_1| = |(T-\alpha_j)(y')|.$$
These three displays prove $r(x') <r(y')$, showing the remaining claim of the lemma.
\end{proof}

\bibliographystyle{egabibstyle}
\bibliography{papers}

\end{document}